\documentclass[a4paper,12pt,final]{article}


\usepackage{amsmath,amsthm,amssymb}
\usepackage[a4paper,margin=1in]{geometry} 

\usepackage[utf8]{inputenc}
\usepackage[T1]{fontenc}
\usepackage{lmodern}

\usepackage{dsfont}

\usepackage{mathrsfs}

\usepackage{stmaryrd}

\usepackage{verbatim}

\usepackage[lowtilde]{url}


\usepackage{paralist}



\newtheorem{theorem}{Theorem}[section]
\newtheorem{proposition}[theorem]{Proposition}
\newtheorem{corollary}[theorem]{Corollary}
\newtheorem{lemma}[theorem]{Lemma}

\theoremstyle{definition}

\newtheorem{assumption}[theorem]{Assumption}
\newtheorem{example}[theorem]{Example}

\theoremstyle{definition}
\newtheorem{remark}[theorem]{Remark}

\swapnumbers
\numberwithin{equation}{section}



\newcommand\bE{\mathds{E}}

\newcommand\bN{\mathds{N}}
\newcommand\bP{\mathds{P}}
\newcommand\bR{\mathds{R}}

\newcommand{\one}{\mathds 1}


\newcommand\cB{\mathcal{B}}

\newcommand\cF{\mathcal{F}}
\newcommand\cG{\mathcal{G}}
\newcommand\cH{\mathcal{H}}

\newcommand\cM{\mathcal{M}}
\newcommand\cP{\mathcal{P}}

\newcommand\cO{\mathcal{O}}
				
\newcommand{\nrm}[1]{\ensuremath{\| #1 \|}}




\newcommand{\nnrm}[2]{\ensuremath{\| #1 \|_{#2}}}

\newcommand{\gnnrm}[2]{\ensuremath{\big\| #1 \big\|_{#2}}}

\newcommand{\sgnnrm}[2]{\ensuremath{\Big\| #1 \Big\|_{#2}}}



\renewcommand{\epsilon}{\ensuremath{\varepsilon}}

\renewcommand{\geq}{\ensuremath{\geqslant}}
\renewcommand{\leq}{\ensuremath{\leqslant}}

\newcommand{\dl}{\ensuremath{\mathrm{d}}}


\renewcommand{\arg}{\,\cdot\,}

\newcommand{\bo}{\ensuremath{\mathscr L}}

\newcommand{\nuc}{\ensuremath{\mathscr L_1}}

\newcommand{\hs}{\ensuremath{\mathscr L_2}}


\newcommand{\id}[1]{\ensuremath{\operatorname{id}}_{#1}}



\newcommand{\dom}{\ensuremath{\mathcal{O}}}

\newcommand{\dt}{\ensuremath{\Delta t}}




\allowdisplaybreaks

\title{
Weak convergence of finite element approximations of linear stochastic evolution equations with additive L\'{e}vy noise
\thanks{
Financial support from the Deutsche Forschungsgemeinschaft (DFG) within the Priority Program 1324 through a fellowship for the first author, grant SCHI 419/5-2 and Marsden Fund project number UOO1418 are gratefully acknowledged.
}
}
\author{Mih\'aly Kov\'acs, Felix Lindner and Ren\'e L.\ Schilling}
\date{}

\begin{document}
\allowdisplaybreaks
\maketitle

\begin{abstract}
    We present an abstract framework to study weak convergence of numerical approximations of linear stochastic partial differential equations driven by additive L\'evy noise. We first derive a representation formula for the error which we then apply to study space-time discretizations of the stochastic heat equation, a Volterra-type integro-differential equation, and the wave equation as examples.  For twice continuously differentiable test functions with bounded second derivative (with an additional condition on the second derivative for the wave equation) the weak rate of convergence is found to be twice the strong rate. The results extend earlier work by two of the authors as we consider general square-integrable infinite-dimensional L\'evy processes and do not require boundedness of the test functions and their first derivative. Furthermore, the present framework is applicable to both hyperbolic and parabolic equations, and even to stochastic Volterra integro-differential equations.
\end{abstract}
\bigskip
    \textbf{Keywords:} Stochastic partial differential equation, infinite-dimensional L\'{e}vy process, cylindrical L\'{e}vy process, Poisson random measure, finite elements, error estimate, weak convergence, backward Kolmogorov equation
\\[\medskipamount]
\textbf{MSC 2010:} 60H15, 65M60; secondary: 60H35, 60G51, 65C30, 35R60

\section{Introduction}
Let $H$ be a real separable Hilbert space and $(\Omega,\cF,(\cF_t)_{t\geq0},\bP)$ be a stochastic basis satisfiying the usual conditions, $L=(L(t))_{t\geq 0}$ be a square-integrable cylindrical L\'evy process in a real separable Hilbert space $U$ with respect to the stochastic basis $(\Omega,\cF,(\cF_t)_{t\geq0},\bP)$, taking values in a possibly larger Hilbert space $U_1\supset U$, and $B:U\to H$ is a bounded linear operator. Consider an $H$-valued stochastic convolution process
\begin{equation}\label{eq:X}
X(t)=E(t)X_0+\int_0^tE(t-s)B\,\dl L(s)
\end{equation}
where $(E(t))_{t\in [0,T]}$ is a family of bounded linear operators on $H$ and $X_0$ is an $\cF_0$-measurable $H$-valued random variable. Without loss of generality, all Hilbert spaces are assumed to be infinite-dimensional.
Important examples of such processes are weak solutions $(X(t))_{t\geq0}$ of certain stochastic partial differential equations (SPDEs, for short) driven by additive L\'{e}vy noise; these can be written as abstract It\^o stochastic differential equations
\begin{equation}\label{eq:mainEq}
\dl X(t)+AX(t)\,\dl t=B\,\dl L(t),\quad t\geq0;\quad X(0)=X_0,
\end{equation}
where $-A$ is the generator of a strongly continuous semigroup $(E(t))_{t\geq 0}$ on $H$. In particular, we consider the stochastic heat equation
\begin{equation}\label{eq:SHE}
\dl X(t)+\Lambda X(t)\dl t= \dl L(t),\quad t\geq0;\quad X(0)=X_0,
\end{equation}
and the stochastic wave equation, written as a first order system,
\begin{equation}\label{eq:SWE}
\begin{aligned}
\dl X_1(t) -X_2(t)\dl t&=0, &&t\geq0;\quad X_1(0)=X_{0,1},\\
\dl X_2(t)+\Lambda X_1(t)\dl t&= \dl L(t),&&t\geq0;\quad X_2(0)=X_{0,2}.
\end{aligned}
\end{equation}
In both cases $\Lambda:=-\Delta=-\sum_{j=1}^d\partial^2/\partial\xi_j^2$ is the Laplace operator on $L^2(\dom)$ where $\dom\subset\bR^d$ is a bounded domain.
For the precise abstract setup of these equations we refer to Sections \ref{sec:Heat} and \ref{sec:Wave}.
In general, however, we do not require that $(E(t))_{t\geq 0}$ enjoys the semigroup property so that the abstract framework can accommodate Volterra-type evolution equations as well, see Subsection \ref{subsec:sve} for details.

Consider an
approximation $\tilde X=(\tilde X(t))_{t\in [0,T]}$ of the process $(X(t))_{t\in[0,T]}$ given by
\begin{equation}\label{eq:Xtilde}
\tilde X(t)=\tilde E(t)X_0+\int_0^t\tilde E(t-s)B\,\dl L(s),
\end{equation}
where $(\tilde E(t))_{t\in[0,T]}$ is a family of bounded linear operators on $H$, which is again not necessarily (extendable to) an operator semigroup. For example, the family  $(\tilde E(t))_{t\in[0,T]}$ may be a time-interpolated solution operator family of a space-time discretized stochastic evolution problem, when $H$ is an $L^2$-space of some spatial domain $\dom$. We study the so-called weak error
\begin{equation}\label{eq:defWeakError}
e(T):=\bE\big(G(\tilde X(T))-G(X(T))\big)
\end{equation}
for suitable test functions $G:H\to \bR$. At the heart of the paper are the error representation formulae for $e(T)$, Theorem \ref{thm:errorRep} and Corollary \ref{cor:errorRep}. The proof of Theorem \ref{thm:errorRep} is
based on Kolmogorov's backward equation for the martingale $Y(t)=E(T)X_0+\int_0^tE(T-s)B\,\dl L(s)$, $t\in[0,T]$, which has the important property that $Y(T)=X(T)$. The introduction of such an auxiliary process $Y$ is well-known for equations with Gaussian noise and has been used by many authors in a weak error analysis, see, for example \cite{debussche_printems,KovLarLin11,KovLarLin13,KovPri14b} to mention just a few (compare also \cite{CJK14, DPJR12}). However, the extension of those arguments is not straightforward and the resulting error representation formula differs from the one in the Gaussian case in \cite{KovLarLin13}. One of the difficulties in the general L\'evy case (in contrast to the Gaussian case) is that there are no readily available, sufficiently general results on  Kolmogorov's backward equation to suit our analysis. We remedy this, at least for $Y$ as above, in Proposition \ref{prop:backwardKolm}. Another complication arises from the fact that we use tools from the theory of stochastic integration based on two different settings. One, where we integrate operator-valued processes w.r.t.\ a Hilbert space-valued L\'evy process, promoted in the monographs \cite[Chapter 8]{PesZab07}, \cite{Met82,MetPel80}, and another one where we integrate Hilbert space-valued integrands w.r.t.\ a Poisson random measure \cite{MRT13,Pre10}. The problem occurs because our setting for stochastic differential equations is based on the first approach while the proof of the error representation formula is based on an It\^o formula which appears in \cite[Theorem 3.6]{MRT13}; the latter form is well suited for our purposes, but it is formulated using the second approach for stochastic integration. Therefore, in the appendix we link the two stochastic integrals so that we can use the results from both theories.

Using the abstract error representation we study the weak error of  space-time discretizations for parabolic equations, such as the stochastic heat equation and a stochastic Volterra integro-differential equation, and a hyperbolic equation, the stochastic wave equation. As space discretization we employ a standard continuous finite element method. For the stochastic heat equation we use the backward Euler method, for the Volterra integro-differential equation the backward Euler method combined with a convolution quadrature, and for the stochastic wave equation an $I$-stable rational approximation of the exponential function, such as the Crank-Nicolson scheme, as time integrators. For all equations considered here, the Hilbert-Schmidt norm condition $$\nnrm{\Lambda^{(\beta-1/\rho)/2}Q^{1/2}}{\hs(L^2(\dom))}<\infty, \quad \beta>0,$$ determines the rate of convergence, where $\rho=1$ for the heat and wave equations and $\rho\in (1,2)$ for the Volterra equation. Here $U=L^2(\dom)$ and $Q\in\bo(L^2(\dom))$ is the covariance operator of $L$ as introduced in Section~\ref{sec:DrivingProcess}.

For the stochastic heat equation, we show in Theorem \ref{thm:SHE} that for twice continuously differentiable test functions with bounded second derivatives the rate of weak convergence is essentially twice that of strong convergence.  It is at least $O(h^{2\beta}+(\dt)^\beta)$, $\beta\in(0,1]$, modulo a logarithmic term, where $h$ and $\dt$ are the space- and time-discretization parameters, respectively. This extends the corresponding result from \cite{LinSch13}, where, in contrast to the present paper, the analysis is restricted to so-called impulsive cylindrical processes on $L^2(\dom)$ as driving noise. Moreover, there is a serious restriction on the jump size intensity measure in \cite[Section~6]{LinSch13}
admitting only processes of bounded variation (on finite time intervals).
Here, the only restriction we have on $L$ is that it is square-integrable, non-Gaussian and has mean zero. Furthermore, we also remove the boundedness assumption on the test functions and their first derivative.

In Subsection \ref{subsec:sve} we briefly discuss a stochastic Volterra-type integro-differential equation and obtain a weak rate of order at least $O(h^{2\beta}+(\dt)^{\rho\beta})$, $\beta\in(0,1/\rho]$, where $\rho$ depends on the order of the convolution kernel appearing in the equation.


For the stochastic wave equation the additional technical condition \eqref{eq:assgSWE} has to be imposed in order to prove that the weak order is twice the strong order. The order of weak convergence is found to be at least $O(h^{\min(r, 2\beta\frac{r}{r+1})}+(\dt)^{\min(1,2\beta\frac{p}{p+1})})$, see Theorem~\ref{thm:SWE}. Here $p$ and $r$ are the classical orders of the time-discretization and of the finite element method. We would like to point out that, while the extra condition \eqref{eq:assgSWE} on the second derivative on the test function is restrictive, it trivially holds for the important function $g(x)=\|x\|_{L^2(\dom)}^2$. We also give a sufficient condition \eqref{eq:weqii} for the above rate of weak convergence to hold which only involves the jump intensity measure of $L$ and $\Lambda$, with no further restriction on the test functions. Condition  \eqref{eq:weqii} is sufficient for $\nnrm{\Lambda^{(\beta-1)/2}Q^{1/2}}{\hs(L^2(\dom))}<\infty$ and, in special cases, it is even equivalent to it, see Example~\ref{ex:spec}.
Furthermore, as far as the authors know, there are no results available in the literature concerning weak approximation of hyperbolic stochastic partial differential equations driven by L\'evy noise.

Let us remark that weak error estimates for approximations of L\'{e}vy-driven stochastic ordinary differential equations have been considered by various authors, see, e.g.~\cite{JKMP05, MikZha11, PlaBru10, ProTal97} and the references therein. There also exists a series of papers on strong error estimates for approximations of SPDEs  driven by L\'{e}vy processes or Poisson random measures, see, for example \cite{Bar10, BarLan12a, BarLan12b, DunHauPro12, Hau08, HauMar06, Lan10} and compare also Remarks \ref{rem:strongErrorSHE} and \ref{rem:strongErrorSWE} below. However, to the best of our knowledge, the first steps in a \emph{weak} error analysis for L\'{e}vy-driven SPDEs have been done only recently in the already mentioned article \cite{LinSch13}.

The present paper
is organized as follows. In Section \ref{sec:Setting and preliminaries} we describe the abstract framework of the paper, introduce infinite-dimensional L\'evy processes with several examples and a framework for linear stochastic partial differential equations driven by additive L\'evy noise. Assumption \ref{ass:abstractSetting} summarizes the main assumptions for the general setting of the paper. In Section \ref{sec:An error representation formula} we state and prove two representation formulae, Theorem~\ref{thm:errorRep} and Corollary~\ref{cor:errorRep},  for $e(T)$ given by \eqref{eq:defWeakError}. The main ingredient in their proofs is Proposition~\ref{prop:backwardKolm} on Kolmogorov's backward equation. We use the representation formula from Corollary \ref{cor:errorRep} to establish weak convergence rates for a space-time discretization scheme first for parabolic equations in Section \ref{sec:Heat}, where we consider the stochastic heat equation (Subsection~\ref{subsec:she}) and, in less detail, a stochastic Volterra-type integro-differential equation (Subsection~\ref{subsec:sve}). Then, in Section \ref{sec:Wave}, we analyse the weak error of a numerical scheme for the stochastic wave equation. In the appendix we link stochastic integration with respect to Poisson random measures to integration with respect to infinite-dimensional L\'evy processes.

\section{Setting and preliminaries}
\label{sec:Setting and preliminaries}

Here we describe in detail our abstract setting and collect some background material from infinite-dimensional stochastic analysis.\\

\noindent
{\bf General notation.}
Let $(\cH,\langle\arg,\arg\rangle_{\cH})$ and $(\cG,\langle\arg,\arg\rangle_{\cG})$ be real, separable Hilbert spaces and denote by $\bo(\cH,\cG)$, $\nuc(\cH,\cG)$ and $\hs(\cH,\cG)$ the spaces of linear and bounded operators, nuclear operators and Hilbert-Schmidt operators from $\cH$ to $\cG$, respectively.  The corresonding norms are denoted by $\nnrm{\arg}{\bo(\cH,\cG)}$, $\nnrm{\arg}{\nuc(\cH,\cG)}$ and $\nnrm{\arg}{\hs(\cH,\cG)}$. If $\cH=\cG$, we write $\bo(\cH)$, $\nuc(\cH)$ and $\hs(\cH)$ instead of $\bo(\cH,\cH)$, $\nuc(\cH,\cH)$ and $\hs(\cH,\cH)$. Given a measure space $(M,\cM,\mu)$ and $1\leq p <\infty$, we denote by $L^p(M;\cH)=L^p(M,\cM,\mu;\cH)$ the space of all $\cM/\cB(\cH)$-measurable mappings $f:M\to\cH$ with finite norm $\nnrm{f}{L^p(M;\cH)}=(\bE\nnrm{f}{\cH}^p)^{1/p}$, where $\cB(\cH)$ denotes the Borel $\sigma$-algebra on the Hilbert space $\cH$. By $C^n(\cH,\bR)$ we denote the space of all $n$-times continuously Fr\'{e}chet differentiable functions $f:\cH\to\bR,\,x\mapsto f(x)$. By $C_{\text b}^n(\cH,\bR)$ we denote the subspace of functions from $C^n(\cH,\bR)$ which are bounded together with their derivatives. Identifying $\cH$ and $\bo(\cH,\bR)$ via the Riesz isomorphism, we consider for fixed $x\in\cH$ the first derivative  $f'(x)$ as an element of $\cH$. Similarly, the second derivative $f''(x)$ is considered as an element of $\bo(\cH)$.
We also write $f_x$ and $f_{xx}$ instead of $f'$ and $f''$.

\subsection{The driving L\'{e}vy process $L$}
\label{sec:DrivingProcess}

The process $L=(L(t))_{t\geq0}$ in Eq.~\eqref{eq:mainEq} is a  L\'{e}vy process with values in a real and separable Hilbert space $U_1$, defined on a filtered probability space $(\Omega,\cF,(\cF_t)_{t\geq0},\bP)$ satisfying the usual conditions (cf.~\cite{PesZab07}).
$L$ is $(\cF_t)$-adapted and for $t,h\geq0$ the increment $L(t+h)-L(t)$ is independent of $\cF_t$. We always consider a c\`{a}dl\`{a}g
(right continuous with left limits) modification of $L$, i.e., a modification such that $L(t)=\lim_{s\searrow t}L(s)$ for all $t\geq0$ and $L(t-):=\lim_{s\nearrow t}L(s)$ exists for all $t>0$, where the limits are pathwise limits in $U_1$.
Our standard reference for Hilbert space-valued L\'{e}vy processes is \cite{PesZab07}.

In order to keep the exposition simple, we assume that $L$ is square-integrable, i.e., $\bE\nnrm{L(t)}{U_1}^2<\infty$, and that the Gaussian part of $L$ vanishes. Moreover, we assume that $L$ has mean zero, i.e., $\bE L(t)=0$ in $U_1$.
Let $\nu$ be the jump intensity measure (L\'{e}vy measure) of $L$. Note that the jump intensity measure $\nu$ of a general L\'{e}vy process in $U_1$ satisfies $\nu(\{0\})=0$ and $\int_{U_1}\min(1,\nnrm{y}{U_1}^2)\nu(dy)<\infty$, cf.~\cite[Section 4]{PesZab07}. Due to our assumptions we have
\begin{equation}\label{eq:assnu}
\int_{U_1}\nnrm{y}{U_1}^2\nu(\dl y)<\infty,
\end{equation}
and the characteristic function of $L$ is given by
\begin{equation}\label{eq:charFunctionL}
\bE e^{i\langle x,L(t)\rangle_{U_1}}=\exp\Big\{-t\int_{U_1}\big(1-e^{i\langle x,y\rangle_{U_1}}+i\langle x,y\rangle_{U_1}\big)\nu(\dl y)\Big\},\quad t\geq0,\;x\in U_1.
\end{equation}
Conversely, any $U_1$-valued L\'{e}vy process $L$ satisfying \eqref{eq:assnu} and \eqref{eq:charFunctionL} is square-integrable, with mean zero and vanishing Gaussian part.


Let $Q_1\in\nuc(U_1)$ be the covariance operator of $L$.
It is determined by the jump intensity measure $\nu$ via
\begin{equation}\label{eq:Q1nu}
\langle Q_1x,y\rangle_{U_1}=\int_{U_1}\langle x,z\rangle_{U_1}\langle y,z\rangle_{U_1}\nu(\dl z),
\quad x,y\in U_1,
\end{equation}
see~\cite[Theorem 4.47]{PesZab07}.
Further, let
\begin{equation*}
(U_0,\langle\arg,\arg\rangle_{U_0}):=\big(Q_1^{1/2}(U_1),\langle Q_1^{-1/2}\arg,Q_1^{-1/2}\arg\rangle_{U_1}\big)
\end{equation*}
be the reproducing kernel Hilbert space of $L$, where $Q_1^{-1/2}$ denotes the pseudo-inverse of $Q_1^{1/2}$, see~\cite[Section 7]{PesZab07}. Recall that the operator $B$ in Eq.~\eqref{eq:mainEq} is defined on the Hilbert space $U$. We assume that
\begin{equation}\label{eq:U0subsetU}
U_0\subset U\subset U_1,
\end{equation}
and that the inclusions \eqref{eq:U0subsetU} define continuous embeddings. We denote the embedding of $U_0$ into $U$ by $J_0\in \bo(U_0,U)$ and set
\begin{equation}\label{eq:defQ}
Q:=J_0J_0^*\in\bo(U).
\end{equation}
The nonnegative and symmetric  operator $Q$ is the covariance operator of $L$ considered as a cylindrical process in $U$, cf.~Remark~\ref{rem:Q} below. As a consequence of Douglas' theorem as stated in \cite[Appendix A.4]{PesZab07}, compare also \cite[Corollary C.0.6]{PreRoeck07}, the reproducing kernel Hilbert space of $L$ has the alternative representation
\begin{equation*}
(U_0,\langle\arg,\arg\rangle_{U_0})=\big(Q^{1/2}(U),\langle Q^{-1/2}\arg,Q^{-1/2}\arg\rangle_U\big).
\end{equation*}

\begin{remark}\label{rem:Q}
Suppose w.l.o.g.\ that $U$ is dense in $U_1$, identify $U$ and $U^*$ via the Riesz isomorphism, and consider the Gelfand triple $U_1^*\subset U^*\equiv U\subset U_1$. Then it is not difficult to see that
\[\bE\langle L(t),x\rangle\langle L(t),y\rangle=t\langle Qx,y\rangle_U,\quad t\geq0,\;x,y\in U_1^*,\]
where $\langle\arg,\arg\rangle:U_1\times U_1^*\to\bR$ is the canonical dual pairing; compare \cite[Proposition 7.7]{PesZab07}.
The unique continuous extensions of the linear mappings $U_1^*\ni x\mapsto \langle L(t),x\rangle\in L^2(\bP)$, $t\geq0$, to the larger space $U^*$ determine a $2$-cylindrical $U$-process in the sense of \cite{MetPel80}, compare also \cite{AppRie10}, \cite{Rie12}, \cite{Rie14}.
\end{remark}

\begin{remark}\label{rem:approxL}
Unlike in the case of a mean-zero (cylindrical) $Q$-Wiener process in $U$, the covariance operators $Q\in\bo(U)$ and $Q_1\in\nuc(U_1)$ do \emph{not} determine the distribution of the L\'{e}vy process $L$, but the jump intensity measure $\nu$ does so according to \eqref{eq:charFunctionL}. Note that the law of a general L\'{e}vy process is determined by its characteristics (L\'{e}vy triplet), cf.~\cite[Definition~4.28]{PesZab07}, and that the characteristics of $L$ are $(-\int_{\{\nnrm{y}{U_1}\geq1\}}y\,\nu(\dl y),0,\nu)$. Nevertheless, the operator $Q$ in $\eqref{eq:defQ}$ will play an important role in our error analysis.
Let us briefly make the connection of our setting to the construction of a cylindrical $Q$-Wiener process in $U$ as described in \cite{DPZ92}, \cite{PreRoeck07}. To this end, let $(f_k)_{k\in\bN}$ be an orthonormal basis of $U_1$ consisting of eigenvectors of $Q_1$ with eigenvalues $(\lambda_k)_{k\in\bN}$ and consider the orthonormal basis $(e_k)_{k\in\bN}$ of $U_0$ given by $e_k:=\lambda_k^{1/2}f_k$. To simplify notation we suppose for the moment that all eigenvalues $\lambda_k$ of $Q_1$ are strictly positive. Then, compare \cite[Section~4.8]{PesZab07}, the real-valued L\'{e}vy processes $L_k=(L_k(t))_{t\geq0}$, $k\in\bN$, given by
\[L_k(t):=\lambda_k^{-1/2}\langle L(t),f_k\rangle_{U_1}\]
are uncorrelated, i.e., $\bE L_k(t)L_j(s)=0$ if $k\neq j$, they satisfy $\bE(L_k^2(t))=t$, and we have
\begin{equation}\label{eq:LPexpansion}
L(t)=\sum_{k\in\bN}L_k(t)e_k.
\end{equation}
The infinite sum in \eqref{eq:LPexpansion} converges for all finite $T>0$ in the space $\cM^2_T(U_1)$ of  c\`{a}dl\`{a}g square-integrable $U_1$-valued $(\cF_t)$-martingales $M=(M(t))_{t\in[0,T]}$ with norm $\nnrm{M}{\cM^2_T(U_1)}=(\bE\nnrm{M(T)}{U_1}^2)^{1/2}$. In contrast to the Gaussian case, where uncorrelated coordinates are always independent, the coordinate processes $L_k$, $k\in\bN$, are in general only uncorrelated but \emph{not} independent.

Conversely, suppose that we are given an arbitrary symmetric and nonnegative operator $Q\in\bo(U)$, an orthonormal basis $(e_k)_{k\in\bN}$ of $U_0=Q^{1/2}(U)$, and a family $L_k$, $k\in\bN$, of real-valued L\'{e}vy processes on $(\Omega,\cF,(\cF_t)_{t\geq0},\bP)$ that satisfy the following conditions:
\begin{itemize}
\item Each $L_k$ is $(\mathcal F_t)$-adapted and for $t,h\geq0$ the increment $L_k(t+h)-L_k(t)$ is independent of $\cF_t$;
\item each $L_k$ is square-integrable with $\bE L_k(t)=0$ and $\bE(L_k^2(t))=t$;
\item the processes $L_k$, $k\in\bN$, are uncorrelated;
\item for all $n\in\bN$ the $\bR^n$-valued process $((L_1(t),\ldots,L_n(t))^\top)_{t\geq 0}$ is a L\'{e}vy process;
\item the Gaussian part of each $L_k$ is zero.
\end{itemize}
Then, if $U_1$ is a Hilbert space containing $U$ such that the natural embedding of $U_0=Q^{1/2}(U)$ into $U_1$ is Hilbert-Schmidt, the infinite sum in \eqref{eq:LPexpansion} converges in $\cM^2_T(U_1)$ and defines a L\'{e}vy process $L$ with reproducing kernel Hilbert space $U_0$ that fits into our setting.
\end{remark}

We end this subsection with some examples of L\'{e}vy processes $L$. We suppose that all processes are defined relative to the stochastic basis $(\Omega,\cF,(\cF_t)_{t\geq0},\bP)$ and that their increments on time intervals $[t,t+h]$ are independent of $\cF_t$.

\begin{example}(\emph{Subordinate cylindrical $\widetilde Q$-Wiener process})
Let $W=(W(t))_{t\geq0}$ be a cylindrical $\widetilde Q$-Wiener process in $U$ in the sense of \cite[Section 2.5.1]{PreRoeck07}, where $\widetilde Q\in\bo(U)$ is a given nonnegative and symmetric operator. Assume that $W$ takes values in a possibly larger Hilbert space $U_1\supset U$ such that the natural embedding of $U$ into $U_1$ is continuous. Let $\widetilde Q_1\in\nuc(U_1)$ be the covariance operator of $W$ considered as a Wiener process in $U_1$, i.e., $\bE\langle W(t),x\rangle_{U_1}\langle W(s),y\rangle_{U_1}=\min(s,t)\langle \widetilde Q_1x,y\rangle_{U_1}$ for $x,\,y\in U_1$, $s,t\geq0$. Let $Z=(Z(t))_{t\geq0}$ be a subordinator, i.e., a real-valued increasing L\'{e}vy process in the sense of \cite[Definition 21.4]{Sato13}, \cite{SSV10}. Assume that $W$ and $Z$ are independent, that the drift of $Z$ is zero, and that the jump intensity measure  $\rho$ of $Z$ satisfies
\begin{equation}\label{eq:measureSubordinator}
\int_0^\infty s\,\rho(\dl s)<\infty.
\end{equation}
The latter is equivalent to assuming that $Z$ has first moments, $\bE|Z(t)|<\infty$. According to \cite[Remark~21.6]{Sato13}, the Laplace tranform of $Z(t)$ is given by
\begin{equation}\label{eq:LaplaceSubordinator}
\bE(e^{-r Z(t)})=\exp\big(-t\int_0^\infty(1-e^{-rs})\rho(\dl s)\big),\quad r\geq0.
\end{equation}
In this situation, subordinate cylindrical Brownian motion
\begin{equation*}\label{eq:subWP}
L(t):=W(Z(t)),\quad t\geq0,
\end{equation*}
defines a $U_1$-valued L\'{e}vy process $L=(L(t))_{t\geq0}$ that fits into the general framework described above. Indeed, $L$ has stationary and independent increments. Moreover, the independence of $W$ and $Z$, the identity $\bE e^{i\langle x,W(s)\rangle_{U_1}}=e^{-s\frac12 \langle \widetilde Q_1x,x\rangle_{U_1}}$, Eq.~\eqref{eq:LaplaceSubordinator} and the symmetry of the distribution $\bP_{W(1)}=N(0,Q_1)$ imply that characteristic function of $L(t)$ is given by
\begin{align*}
\bE e^{i\langle x,L(t)\rangle_{U_1}}
&=\int_0^\infty e^{-s\frac12 \langle\widetilde Q_1x,x\rangle_{U_1}}\,\bP_{Z(t)}(\dl s)\\
&=\exp\Big[-t\int_0^\infty(1-e^{-\frac12\langle\widetilde Q_1x,x\rangle_{U_1}s})\rho(\dl s)\Big]\\
&=\exp\Big[-t\int_0^\infty\int_{U_1}(1-e^{i\langle x,\sqrt s y\rangle_{U_1}}+i\langle x,\sqrt s y\rangle_{U_1})\bP_{W(1)}(\dl y)\rho(\dl s)\Big].
\end{align*}
As a consequence, \eqref{eq:charFunctionL} holds with
\begin{equation}\label{eq:subWPnu}
\nu=(\bP_{W(1)}\otimes\rho)\circ\kappa^{-1},
\end{equation}
where $\kappa:U_1\times(0,\infty)\to U_1$ is defined by $\kappa(y,s)=\sqrt s y$; compare \cite[Lemma 4.8]{Rie14}. (Note that, by the scaling property of $W$, \eqref{eq:subWPnu} is equivalent to the standard formula $\nu=\int_0^\infty\bP_{W(s)}\,\rho(\dl s)$, where the measure-valued integral is defined in a weak sense, cf.~\cite[Section~30]{Sato13}). Moreover, \eqref{eq:assnu} holds due to \eqref{eq:measureSubordinator} as we have the equality
$\int_{U_1}\nnrm{y}{U_1}^2\nu(\dl y)=\int_0^\infty s\,\rho(\dl s)\,\bE(\nnrm{W(1)}{U_1}^2)$ according to \eqref{eq:subWPnu}.
It follows that $L$ is a $U_1$-valued, square-integrable, mean-zero L\'{e}vy process with vanishing Gaussian part. It is also not difficult to show that the covariance operators $Q_1\in\nuc(U_1)$ and $Q\in\bo(U)$ of $L$ in \eqref{eq:Q1nu} and \eqref{eq:defQ} are given by
$Q_1=\int_0^\infty s\,\rho(\dl s)\,\widetilde Q_1$ and $Q=\int_0^\infty s\,\rho(\dl s)\,\widetilde Q$.
Subordinate cylindrical Wiener processes have been considered, e.g., in \cite{BrzZab10}.
\end{example}

\begin{example}(\emph{Independent one-dimensional L\'{e}vy processes})\label{ex:1dlevy}
Let $Q\in\bo(U)$ be symmetric, nonnegative and let $(e_k)_{k\in\bN}$ be an orthonormal basis of $U_0:=Q^{1/2}(U)\subset U$. Let $L_k=(L(t))_{k\in\bN}$, $k\in\bN$, be independent real-valued square-integrable L\'{e}vy processes with vanishing Gaussian part and $\bE L_k(t)=0$, $\bE(L_k^2(t))=t$. Let $U_1\supset U$ be another Hilbert space such that the natural embedding of $U_0$ into $U_1$ is a Hilbert-Schmidt operator. Then, the series \eqref{eq:LPexpansion} converges for all $T\in(0,\infty)$ in the space $\cM^2_T(U_1)$ and defines a L\'{e}vy process $L=(L(t))_{t\geq0}$ satisfiying \eqref{eq:assnu} and \eqref{eq:charFunctionL} with jump intensity measure
\[\nu=\sum_{k\in\bN}\nu_k\circ\pi_k^{-1},\]
where $\nu_k$ is the L\'{e}vy measure of $L_k$ and $\pi_k:\bR\to U_1$ is defined by $\pi_k(\xi):=\xi e_k$; compare \cite[Section 4.8.1]{PesZab07}.
\end{example}

\begin{example}(\emph{Impulsive cylindrical process})
Let $\mu$ be a L\'{e}vy measure on $\bR$ such that $\int_\bR\sigma^2\mu(\dl\sigma)<\infty$. Let $\cO\subset\bR^d$ be a bounded domain and $Z=(Z(t))_{t\geq0}$ an impulsive cylindrical process on $U:=L^2(\cO)=L^2(\cO,\cB(\cO),\lambda^d)$ with jump size intensity $\mu$ in the sense of \cite[Definition 7.23]{PesZab07}. Here, $\lambda^d$ denotes $d$-dimensional Lebesgue measure.  The process $Z$ is a measure-valued process defined, informally, by $Z(t,\dl\xi)=\int_0^t\int_\bR\sigma\hat\pi(\dl s,\dl\xi,\dl \sigma)$, where $\hat\pi$ is a compensated Poisson random measure on $[0,\infty)\times\cO\times\bR$ with reference measure $\lambda^1\otimes\lambda^d\otimes\mu$; see \cite[Section 7.2]{PesZab07} for details. Let $\widetilde Q\in\bo(U)$ be symmetric and nonnegative, $(b_k)_{k\in\bN}$ an orthonormal basis of $U$, and $U_1\supset U$ a Hilbert space such that the natural embedding of $U_0=\tilde Q^{1/2}(U)\subset U$ into $U_1$ is Hilbert-Schmidt. Then the series
\begin{equation}\label{eq:imulsiveProcess}
L(t):=\widetilde Q^{\frac12} Z(t):=\sum_{k\in\bN}\int_0^t\int_\cO\int_\bR\sigma b_k(\xi)\hat\pi(\dl s,\dl\xi,\dl \sigma)\widetilde Q^{\frac12}b_k,\quad t\geq0,
\end{equation}
converges for all $T\in(0,\infty)$ in $\cM^2_T(U_1)$ and defines a L\'{e}vy process that fits into our general framework with $Q=\int_\bR\sigma^2\mu(\dl\sigma)\widetilde Q$ and
\[\nu=(\lambda^d\otimes\mu)\circ \phi^{-1},\]
where $\phi\in L^2(\cO\times\bR,\lambda^d\otimes\mu;U_1)$ is defined by $\phi(\xi,\sigma)=\sum_{n\in\bN}\sigma b_k(\xi)\widetilde Q^{\frac12}b_k$ (convergence in $L^2(\cO\times\bR,\lambda^d\otimes\mu;U_1)$).
In \cite{LinSch13} we considered the weak approximation of the stochastic heat equation driven by an impulsive process of the form \eqref{eq:imulsiveProcess}. The results in Section~\ref{subsec:she} of the present article improve the results of \cite{LinSch13} in several aspects.
\end{example}

\subsection{Linear stochastic evolution equations with additive noise}
\label{sec:LSEE}

We are mainly interested in equations of the type \eqref{eq:mainEq}, where $A:D(A)\subset H\to H$ is an unbounded linear operator such that $-A$ is the generator of a strongly continuous semigroup $(E(t))_{t\geq0}\subset\bo(H)$, $B\in\bo(U,H)$, $L=(L(t))_{t\geq0}$ is a square-integrable L\'{e}vy process with reproducing kernel Hilbert space $U_0\subset U$ as described in Subsection~\ref{sec:DrivingProcess}, and $X_0\in L^2(\Omega,\cF_0,\bP;H)$. It is well known that if
\begin{equation}\label{eq:assEB1}
\int_0^T \nnrm{E(t)B}{\hs(U_0,H)}^2\dl t <\infty
\end{equation}
for some (and hence for all) $T>0$, then there exists a unique weak solution $X=(X(t))_{t\geq0}$ to \eqref{eq:mainEq} which is given by the variation-of-constants formula \eqref{eq:X}, see, e.g., \cite[Chapter~9]{PesZab07}.
Similarly, if $(\tilde E(t))_{t\in[0,T]}\subset\bo(H)$
is given by some approximation scheme such that $t\mapsto\tilde E(t)B$ is a measurable mapping from $[0,T]$ to $\hs(U_0,H)$, then the condition
\begin{equation}\label{eq:assEBtilde1}
\int_0^T \nnrm{\tilde E(t)B}{\hs(U_0,H)}^2\dl t <\infty
\end{equation}
ensures that the approximation $\tilde X=(\tilde X(t))_{t\in[0,T]}$ of $(X(t))_{t\in[0,T]}$ in \eqref{eq:Xtilde} exists as a square-integrable $H$-valued process. We refer to \cite[Chapter~8]{PesZab07} for details on the construction and properties of the stochastic integral w.r.t.\ Hilbert space-valued L\'{e}vy processes.

It turns out that our general error representation formula for the weak error $e(T)$ in \eqref{eq:defWeakError} does not require the semigroup property of the strongly continuous family of operators $(E(t))_{t\geq0}$. This paves the way for analysing a more general class of L\'{e}vy-driven linear stochastic evolution equations, including for example stochastic Volterra-type equations as considered in \cite{KovPri14a}, \cite{KovPri14b} for the Gaussian case, see Subsection \ref{subsec:sve} for an example.  For such equations, the weak solution still has the form  \eqref{eq:X} but the solution operator family $(E(t))_{t\geq0}\subset\bo(H)$ is not a semigroup anymore.  Therefore, we weaken our abstract assumptions and summarize them as follows.

\begin{assumption} \label{ass:abstractSetting}
We will use the following assumptions:
\begin{compactenum}[(i)]
\item $H$, $U$ and $U_1$ are real and separable Hilbert spaces;
\item
$L=(L(t))_{t\geq0}$ is a $U_1$-valued L\'{e}vy process on $(\Omega,\cF,(\cF_t)_{t\geq0},\bP)$ with zero mean and finite second moments and reproducing kernel Hilbert space $U_0$ such that $U_0\subset U\subset U_1$ as described in Subsection~\ref{sec:DrivingProcess};
\item
$X_0\in L^2(\Omega,\cF_0,\bP;H)$;
\item
$B\in\bo(U,H)$ and $(E(t))_{t\in[0,T]}\subset \bo(H)$ is a strongly continuous family of linear operators such that \eqref{eq:assEB1} holds;
\item
for all $\epsilon>0$ there exists $\Phi_\epsilon\in\hs(U_0,H)$ and $C_\epsilon>0$ such that
\[\nnrm{E(t)Bx}{H}\leq \nnrm{\Phi_\epsilon x}H,\quad (t,x)\in[\epsilon,T]\times U_0;\]
\item
$(\tilde E(t))_{t\in[0,T]}\subset\bo(H)$ is a family of linear operators such that $t\mapsto\tilde E(t)B$ is a measurable mapping from $[0,T]$ to $\hs(U_0,H)$ and \eqref{eq:assEBtilde1} holds;
\item
$X=(X(t))_{t\in[0,T]}$ and $\tilde X=(\tilde X(t))_{t\in[0,T]}$ are $H$-valued stochastic processes given by \eqref{eq:X} and \eqref{eq:Xtilde}.
\end{compactenum}
\end{assumption}

\begin{remark}
If $(E(t))_{t\geq0}$ is an operator semigroup, then \ref{ass:abstractSetting}(v) is a consequence of \ref{ass:abstractSetting}(iv). Indeed, if \eqref{eq:assEB1} holds then $E(t_0)B\in \hs(U_0,H)$ for some $t_0\in (0,\epsilon)$ and hence
\[\nnrm{E(t)Bx}{H}\le \|E(t_0)Bx\|_H\sup_{t\in [t_0,T]}\|E(t-t_0)\|_{\bo(H)},\quad(t,x)\in[\epsilon,T]\times U_0.\]
Hence, one may take $\Phi_{\epsilon}:=cE(t_0)B$ where $c:=\sup_{t\in [t_0,T]}\|E(t-t_0)\|_{\bo(H)}$.
\end{remark}

To fix notation, let us briefly recall the It\^{o} isometry for stochastic integrals w.r.t.\ $L$. It has the same form as the It\^{o} isometry for stochastic integrals w.r.t.\ Hilbert space-valued Wiener processes.
We set $\Omega_T:=\Omega\times[0,T]$ and $\bP_T:=\bP\otimes\lambda$, where $\lambda$ is Lebesgue measure on $[0,T]$. The predictable $\sigma$-algebra on $\Omega_T$ w.r.t.\ $(\cF_t)_{t\in[0,T]}$ is denoted by $\cP_T$. For operator-valued processes $\Phi=(\Phi(t))_{t\in[0,T]}$ in
\[L^2(\Omega_T,\bP_T;\hs(U_0,H)):=L^2(\Omega_T,\cP_T,\bP_T;\hs(U_0,H)),\]
we have
\begin{equation}\label{eq:ItoIsom}
\bE\Big(\gnnrm{\int_0^t\Phi(s)\dl L(s)}H^2\Big)=\bE\int_0^t\nnrm{\Phi(s)}{\hs(U_0,H)}^2\dl s,\quad t\in[0,T],
\end{equation}
and the integral process $(\int_0^t\Phi(s)\dl L(s))_{t\in[0,T]}$ belongs to the space $\cM^2_T(H)$ of  c\`{a}dl\`{a}g square-integrable $H$-valued $(\cF_t)$-martingales. The usual norm in $\cM^2_T(H)$ is defined by $\nnrm{M}{\cM^2_T(H)}=(\bE\nnrm{M(T)}{H}^2)^{1/2}$, $M=(M(t))_{t\in[0,T]}\in\cM^2_T(H)$. Note, however, that the integral processes given by the stochastic integrals in \eqref{eq:X} and \eqref{eq:Xtilde} are in general \emph{not} martingales since the (deterministic) operator-valued integrands also depend on $t$.

We also recall
the definition and some properties of Hilbert-Schmidt operators,
cf.~\cite[Chapter~6]{Wei80}. Let $\cH$ and $\cG$ be real and separable Hilbert spaces. A linear and bounded operator $C\in\bo(\cH,\cG)$ belongs to the space $\hs(\cH,\cG)$ of Hilbert-Schmidt operators if
\[\nnrm{C}{\hs(\cH,\cG)}:=\Big(\sum_{k\in\bN}\nnrm{Ch_k}{\cG}^2\Big)^{1/2}<\infty\]
for some (and hence for every) orthonormal basis $(h_k)_{k\in\bN}$ of $\cH$. If $C\in\bo(\cH,\cG)$ and $C^*\in\bo(\cG,\cH)$ is the adjoint operator, then $C\in\hs(\cH,\cG)$ if and only if $C^*\in\hs(\cG,\cH)$ and one has
\begin{equation}\label{eq:propHSadjoint}
\nnrm{C}{\hs(\cH,\cG)}=\nnrm{C^*}{\hs(\cG,\cH)}.
\end{equation}
Also, if $C\in\hs(\cH,\cG)$, $D\in\bo(\cH)$ and $F\in\bo(\cG)$, then obviously $CD\in\hs(\cH,\cG)$, $FC\in\hs(\cH,\cG)$ and
\begin{equation}\label{eq:estHS}
\nnrm{CD}{\hs(\cH,\cG)}\leq\nnrm{C}{\hs(\cH,\cG)}\nnrm{D}{\bo(\cH)},\quad
\nnrm{FC}{\hs(\cH,\cG)}\leq\nnrm{F}{\bo(\cG)}\nnrm{C}{\hs(\cH,\cG)}.
\end{equation}
In particular, in our setting we have $\bo(U_1,H)\subset\hs(U_0,H)$ since
\[\nnrm{C}{\hs(U_0,H)}=\nnrm{CQ_1^{1/2}}{\hs(U_1,H)}\leq\nnrm{C}{\bo(U_1,H)}\nnrm{Q_1^{1/2}}{\hs(U_1)}\]
for all $C\in\bo(U_1,H)$ and $\nnrm{Q_1^{1/2}}{\hs(U_1)}=\text{Tr\,}Q_1=\nnrm{Q_1}{\nuc{(U_1)}}<\infty$.

\section{An error representation formula}
\label{sec:An error representation formula}

In this section, we state and prove a general representation formula for the weak approximation error $e(T)$ in \eqref{eq:defWeakError} within the abstract setting described above.

\subsection{Formulation of the result}

For the formulation and the proof of the error representation formula, we introduce auxiliary drift-free It\^{o} processes $Y=(Y(t))_{t\in[0,T]}$ and $\tilde Y=(\tilde Y(t))_{t\in[0,T]}$ such that
\begin{equation*}
X(T)=Y(T),\quad \tilde X(T)=\tilde Y(T).
\end{equation*}
The processes $Y$ and $\tilde Y$ are constructed by applying to $X$ and $\tilde X$ the deterministic operator-valued processes $(E(T-t))_{t\in[0,T]}$ and $(\tilde E(T-t))_{t\in[0,T]}$. That is, we set
\begin{equation}\label{eq:defY}
Y(t):=E(T)X_0+\int_0^tE(T-s)B\,\dl L(s), \quad t\in[0,T],
\end{equation}
and
\begin{equation}\label{eq:defYtilde}
\tilde Y(t):=\tilde E(T)X_0+\int_0^t\tilde E(T-s)B\,\dl L(s), \quad t\in[0,T].
\end{equation}

Moreover, we consider the auxiliary problem
\[\dl Z(t)=E(T-t)B\,\dl L(t),\quad t\in[\tau,T];\qquad Z(\tau)=\xi,\]
where $\tau\in[0,T)$ and $\xi$ is an $H$-valued $\cF_\tau$-measurable random variable. Its solution is given by
\begin{equation}\label{eq:defZttauxi}
Z(t,\tau,\xi):=\xi+\int_{\tau}^t E(T-s)B\,\dl L(s),\quad t\in[\tau,T],
\end{equation}
and we use it to define for $G\in C^2(H,\bR)$ with $\sup_{x\in H}\nnrm{G''(x)}{\bo(H)}<\infty$ a function $u:[0,T]\times H\to\bR$ by
\begin{equation}\label{eq:defuxt}
u(t,x):=\bE\,G(Z(T,t,x)),\quad (t,x)\in [0,T]\times H.
\end{equation}
Note that the boundedness of $G''$ implies quadratic and linear growth of $G$ and $G'$, respectively. That is,
\begin{equation}\label{eq:growthGG'}
|G(x)|\leq C(1+\nnrm{x}H^2),\quad G'(x)\leq C(1+\nnrm{x}H)
\end{equation}
for all $x\in H$ and a constant $C\in(0,\infty)$ that does not depend on $x$.
It is also not difficult to see that $u$
is twice Fr\'{e}chet differentiable w.r.t. $x$ and we have
\begin{equation}\label{eq:u_xu_xx}
u_x(t,x)=\bE\,G'(Z(T,t,x)),\quad u_{xx}(t,x)=\bE\,G''(Z(T,t,x)).
\end{equation}
All expectations appearing in \eqref{eq:defuxt} and \eqref{eq:u_xu_xx} make sense due to \eqref{eq:growthGG'}, the assumption $\sup_{x\in H}\nnrm{G''(x)}{\bo(H)}<\infty$, \eqref{eq:assEB1} and It\^{o}'s isometry \eqref{eq:ItoIsom}.

Before stating the representation formula, we show in the following lemma how operators in $\hs(U_0,H)$ can be identified with functions in
\begin{equation*}
L^2(U_1,\nu;H):=L^2(U_1,\mathcal B(U_1),\nu;H)
\end{equation*}
and how processes in $L^2(\Omega_T,\bP_T;\hs(U_0,H))$ can be identified with elements in
\begin{equation*}
L^2(\Omega_T\times U_1,\bP_T\otimes\nu;H):=L^2(\Omega_T\times U_1,\cP_T\otimes \cB(U_1),\bP_T\otimes\nu;H).
\end{equation*}
These identifications will be used implicitly throughout this article, see Remark~\ref{not:Phix} below. They also lead to a generic identification of integrals w.r.t.\ (cylindrical) Hilbert space-valued L\'{e}vy processes of jump type and integrals w.r.t.\ the associated Poisson random measures, cf.\ Appendix~\ref{sec:PRM+SI}.


\begin{lemma}\label{lem:integrandIsomorphism}
Let $(f_k)_{k\in\bN}\subset U_0$ be an orthonormal basis of $U_1$ consisting of eigenvectors of the covariance operator $Q_1\in\nuc(U_1)$ of $L$ and let $(\lambda_k)_{k\in\bN}\subset[0,\infty)$ be the corresponding sequence of eigenvalues

\textbf{(i)}
Given $\Phi\in\hs(U_0,H)$, the series
\[\iota(\Phi):=\sum_{k\in\bN,\lambda_k\neq 0}\langle\arg,f_k\rangle_{U_1}\Phi f_k\]
converges in $L^2(U_1,\nu; H)$.

The linear mapping
\[\iota:\hs(U_0,H)\to L^2(U_1,\nu;H),\;\Phi\mapsto \iota(\Phi)\]
is an isometric embedding.

\textbf{(ii)}
Given $\Phi\in L^2(\Omega_T,\bP_T;\hs(U_0,H))$, the series
\[\kappa(\Phi):=\sum_{k\in\bN,\lambda_k\neq0}\langle\arg,f_k\rangle_{U_1}\Phi(\arg) f_k\]
converges in $L^2(\Omega_T\times U_1,\bP_T\otimes\nu;H)$.
The linear mapping
\[\kappa:L^2(\Omega_T,\bP_T;\hs(U_0,H))\to L^2(\Omega_T\times U_1,\bP_T\otimes\nu;H),\;\Phi\mapsto \kappa(\Phi)\]
is an isometric embedding.  For $\bP_T$-almost all $(\omega,t)\in\Omega_T$ we have
$\kappa(\Phi)(\omega,t,\arg)=\iota(\Phi(\omega,t))$
in $L^2(U_1,\nu;H)$, where $\iota$ is the embedding from \textup{(i)}.
\end{lemma}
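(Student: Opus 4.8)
The plan is to reduce both parts to the single algebraic identity \eqref{eq:Q1nu}, which in the eigenbasis $(f_k)$ diagonalises the action of $\nu$ on the coordinate functionals. The computation I would perform first is, using \eqref{eq:Q1nu} together with $Q_1f_j=\lambda_jf_j$,
\[\int_{U_1}\langle z,f_j\rangle_{U_1}\langle z,f_k\rangle_{U_1}\,\nu(\dl z)=\langle Q_1f_j,f_k\rangle_{U_1}=\lambda_j\,\delta_{jk}.\]
This orthogonality is the engine behind everything: it turns $L^2(U_1,\nu;H)$-norms of the coordinate expansions into weighted $\ell^2$-sums with weights $\lambda_k$, and these weights are exactly what converts sums of the type $\sum_k\nnrm{\Phi f_k}{H}^2$ into the $\hs(U_0,H)$-norm, since $e_k=\lambda_k^{1/2}f_k$ is an orthonormal basis of $U_0$. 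Note also that for the degenerate indices $\lambda_k=0$ the identity forces $\langle\arg,f_k\rangle_{U_1}=0$ $\nu$-a.e., which is why the sums may be restricted to $\lambda_k\neq0$ without loss.

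For part (i) I would work with the finite partial sums $\iota_N(\Phi)(z)=\sum_{k\leq N,\,\lambda_k\neq0}\langle z,f_k\rangle_{U_1}\Phi f_k$. Expanding $\nnrm{\iota_N(\Phi)(z)}{H}^2$ into a double sum and integrating term by term against $\nu$, the orthogonality above collapses it to the diagonal, yielding $\nnrm{\iota_N(\Phi)}{L^2(U_1,\nu;H)}^2=\sum_{k\leq N,\,\lambda_k\neq0}\lambda_k\nnrm{\Phi f_k}{H}^2$. Since $\lambda_k\nnrm{\Phi f_k}{H}^2=\nnrm{\Phi e_k}{H}^2$, the right-hand side is bounded by $\nnrm{\Phi}{\hs(U_0,H)}^2$, and the tails $\sum_{M<k\leq N}\lambda_k\nnrm{\Phi f_k}{H}^2$ vanish as $M,N\to\infty$. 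Hence $(\iota_N(\Phi))_N$ is Cauchy in the complete space $L^2(U_1,\nu;H)$ and converges to $\iota(\Phi)$; passing to the limit in the norm identity gives $\nnrm{\iota(\Phi)}{L^2(U_1,\nu;H)}=\nnrm{\Phi}{\hs(U_0,H)}$. Linearity of $\iota$ is immediate, and isometry gives injectivity, so $\iota$ is an isometric embedding.

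For part (ii) the idea is to fibre over $(\omega,t)\in\Omega_T$ and apply part (i) pointwise. For fixed $(\omega,t)$ the partial sum $\kappa_N(\Phi)(\omega,t,\arg)$ is precisely $\iota_N(\Phi(\omega,t))$; it is jointly $\cP_T\otimes\cB(U_1)$-measurable because each summand is a product of the $\cP_T$-measurable map $(\omega,t)\mapsto\Phi(\omega,t)f_k$ (a composition of $\Phi$ with the bounded evaluation $\hs(U_0,H)\to H$) and the $\cB(U_1)$-measurable functional $\langle\arg,f_k\rangle_{U_1}$. By Tonelli and part (i),
\[\nnrm{\kappa_N(\Phi)}{L^2(\Omega_T\times U_1,\bP_T\otimes\nu;H)}^2=\bE\int_0^T\sum_{k\leq N,\,\lambda_k\neq0}\lambda_k\nnrm{\Phi(\omega,t)f_k}{H}^2\,\dl t,\]
and the same tail estimate, now integrated against $\bP_T$ and controlled by $\nnrm{\Phi}{L^2(\Omega_T,\bP_T;\hs(U_0,H))}^2<\infty$, shows $(\kappa_N(\Phi))_N$ is Cauchy in $L^2(\Omega_T\times U_1,\bP_T\otimes\nu;H)$. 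Its limit $\kappa(\Phi)$ inherits measurability, and letting $N\to\infty$ in the displayed identity shows $\kappa$ is an isometric embedding.

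It remains to identify the fibres, and this is the step I expect to require the most care, since the $L^2$-limit $\kappa(\Phi)$ is a priori only defined up to $\bP_T\otimes\nu$-null sets and must be matched with the pointwise-defined $\iota(\Phi(\omega,t))$. I would invoke the canonical isomorphism $L^2(\Omega_T\times U_1,\bP_T\otimes\nu;H)\cong L^2(\Omega_T,\bP_T;L^2(U_1,\nu;H))$, under which convergence of $\kappa_N(\Phi)$ to $\kappa(\Phi)$ becomes convergence of $L^2(U_1,\nu;H)$-valued maps. Passing to a subsequence, $\kappa_{N_j}(\Phi)(\omega,t,\arg)\to\kappa(\Phi)(\omega,t,\arg)$ in $L^2(U_1,\nu;H)$ for $\bP_T$-almost all $(\omega,t)$; but for every $(\omega,t)$ part (i) already gives $\kappa_{N_j}(\Phi)(\omega,t,\arg)=\iota_{N_j}(\Phi(\omega,t))\to\iota(\Phi(\omega,t))$. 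Uniqueness of limits then forces $\kappa(\Phi)(\omega,t,\arg)=\iota(\Phi(\omega,t))$ for $\bP_T$-almost all $(\omega,t)$, as claimed. The underlying algebra is routine; the genuine work is this measure-theoretic identification together with the joint measurability of $\kappa(\Phi)$.
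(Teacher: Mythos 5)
Your proposal is correct and follows essentially the same route as the paper: the orthogonality relation $\int_{U_1}\langle z,f_j\rangle_{U_1}\langle z,f_k\rangle_{U_1}\,\nu(\dl z)=\lambda_j\delta_{jk}$ drawn from \eqref{eq:Q1nu} and the Cauchy partial-sum computation yielding $\sum_k\nnrm{\Phi e_k}{H}^2=\nnrm{\Phi}{\hs(U_0,H)}^2$ are exactly the paper's argument for (i), and (ii) is likewise handled by applying (i) fibrewise. The only difference is one of detail: where the paper disposes of the fibre identification with the phrase ``follows from an approximation argument,'' you spell it out correctly via the isomorphism with $L^2(\Omega_T,\bP_T;L^2(U_1,\nu;H))$ and a.e.\ convergence along a subsequence.
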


\begin{proof}
\textbf{(i)}
W.l.o.g.\ all eigenvalues $\lambda_k$ are strictly positive. Let $(e_k)_{k\in\bN}$ be the orthonormal basis of $U_0$ given by $e_k:=\lambda_k^{1/2}f_k$. For $m,n\in\bN$ with $m\leq n$ we have
\begin{align*}
\sgnnrm{\sum_{k=m}^n\langle\arg,f_k\rangle_{U_1}\Phi f_k}{L^2(U_1,\nu;H)}^2
&=\int_{U_1}\sgnnrm{\sum_{k=m}^n\langle x,f_k\rangle_{U_1}\Phi f_k}{H}^2\nu(\dl x)\\
&= \sum_{j,k=m}^n\lambda_j^{-1/2}\lambda_k^{-1/2}\int_{U_1}\langle x,f_j\rangle_{U_1}\langle x,f_k\rangle_{U_1}\nu(\dl x)\,\langle\Phi e_j,\Phi e_k\rangle_H\\
&=\sum_{k=m}^n\nnrm{\Phi e_k}H^2;
\end{align*}
in the last step we used \eqref{eq:Q1nu}. Since $\sum_{k\in\bN}\nnrm{\Phi e_k}H^2=\nnrm{\Phi}{\hs(U_0,H)}^2<\infty$, this shows that the partial sums $\sum_{k=1}^n\langle\arg,f_k\rangle_{U_1}\Phi f_k$, $n\in\bN$, are a Cauchy sequence in $L^2(U_1,\nu;H)$ and
\[\sgnnrm{\sum_{k=1}^\infty\langle\arg,f_k\rangle_{U_1}\Phi f_k}{L^2(U_1,\nu;H)}=\nnrm{\Phi}{\hs(U_0,H)}.\]

\textbf{(ii)} The first two assertions can be shown as in the proof of (i). The last assertion is due the fact that the iterated integral
\[\int_\Omega\int_0^T\int_{U_1}\nnrm{\iota(\Phi(\omega,t))(x)-\kappa(\Phi)(\omega,t,x)}H^2\,\nu(\dl x)\,\dl t\,\bP(\dl\omega)\]
equals zero, which follows from an approximation argument.
\end{proof}

\begin{remark}\label{not:Phix}
From now on we will
identify operators $\Phi\in\hs(U_0,H)$ with the corresponding mappings $\iota(\Phi)\in L^2(U_1,\nu;H)$ and write
\[\Phi x=\iota(\Phi)(x),\quad x\in U_1.\]
Analogously, we identify processes $\Phi\in L^2(\Omega_T,\bP_T;\hs(U_0,H))$ with the corresponding mappings $\kappa(\Phi)\in L^2(\Omega_T\times U_1,\bP_T\otimes\nu;H)$ and write
\[\Phi(\omega,t)x=\kappa(\Phi)(\omega,t,x),\quad (\omega,t,x)\in \Omega_T\times U_1.\]
For processes $\Phi\in L^2(\Omega_T,\bP_T;\hs(U_0,H))$ both identifications 
are compatible $\bP\otimes\lambda$-almost everywhere on $\Omega_T$ in the sense that we have $\kappa(\Phi)(\omega,t,\arg)=\iota(\Phi(\omega,t))$
in $L^2(U_1,\nu;H)$ for $\bP\otimes\lambda$-almost all $(\omega,t)\in\Omega_T$.
\end{remark}

Here is the main result of this section.
\begin{theorem}\label{thm:errorRep}
Let Assumption~\ref{ass:abstractSetting} hold and $G\in C^2(H,\bR)$ with $\sup_{x\in H}\nnrm{G''(x)}{\bo(H)}<\infty$.
Then, for the process $(\tilde Y(t))_{t\in[0,T]}$ from \eqref{eq:defYtilde} and the function $u:[0,T]\times H\to\bR$ from \eqref{eq:defuxt} it holds that
\begin{equation}\label{eq:errorRep1}
\begin{aligned}
\bE\int_0^T\int_{U_1}&\Big|u\big(t,\tilde Y(t)+\tilde E(T-t) By\big)-u\big(t,\tilde Y(t)+E(T-t)By\big)\\
&-\big\langle u_x(t,\tilde Y(t)),\big(\tilde E(T-t) B-E(T-t)B\big)y\big\rangle_H\Big|\,\nu(\dl y)\,\dl t\;<\;\infty.
\end{aligned}
\end{equation}
The weak error $e(T)$ in \eqref{eq:defWeakError} has the representation
\begin{equation}\label{eq:errorRep2}
\begin{aligned}
e(T)&=\bE\big\{u(0,\tilde E(T)X_0)-u(0,E(T)X_0)\big\}\\
&\quad +\bE\int_0^T\int_{U_1}\Big\{u\big(t,\tilde Y(t)+\tilde E(T-t) By\big)-u\big(t,\tilde Y(t)+E(T-t)By\big)\\
&\hspace{3.5cm}-\big\langle u_x(t,\tilde Y(t)),\big(\tilde E(T-t) B-E(T-t)B\big)y\big\rangle_H\Big\}\,\nu(\dl y)\,\dl t.
\end{aligned}
\end{equation}
\end{theorem}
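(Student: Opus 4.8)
The plan is to reduce the weak error to the two drift-free processes $Y$ and $\tilde Y$ from \eqref{eq:defY}--\eqref{eq:defYtilde}, which satisfy $Y(T)=X(T)$ and $\tilde Y(T)=\tilde X(T)$, and to exploit the defining property $u(T,\arg)=G$ of the function $u$ in \eqref{eq:defuxt} (since $Z(T,T,x)=x$). Thus $e(T)=\bE\,u(T,\tilde Y(T))-\bE\,u(T,Y(T))$, and the idea is to unfold each terminal value by an It\^o formula back to the initial time. For the true process this is immediate: by independence and stationarity of the increments of $L$, the process $t\mapsto u(t,Y(t))=\bE[G(Y(T))\mid\cF_t]$ is a martingale, so $\bE\,G(X(T))=\bE\,u(T,Y(T))=\bE\,u(0,Y(0))=\bE\,u(0,E(T)X_0)$. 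Equivalently, applying the It\^o formula to $u(t,Y(t))$ and taking expectations, the time-derivative term and the $\nu$-compensator term cancel \emph{exactly} by the backward Kolmogorov equation of Proposition~\ref{prop:backwardKolm}, which is precisely the reason $u$ is built from the $E(T-s)B$ dynamics.

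The core computation is the It\^o formula applied to $u(t,\tilde Y(t))$. Writing the increments of $\tilde Y$ as jumps with coefficient $\tilde E(T-t)B$, \cite[Theorem~3.6]{MRT13} gives, after taking expectations so that the compensated (martingale) jump term drops out,
\[
\bE\,u(T,\tilde Y(T))-\bE\,u(0,\tilde E(T)X_0)
=\bE\!\int_0^T\!\partial_t u(t,\tilde Y(t))\,\dl t
+\bE\!\int_0^T\!\!\int_{U_1}\!\Delta_{\tilde E}(t,y)\,\nu(\dl y)\,\dl t,
\]
where $\Delta_{\tilde E}(t,y)=u(t,\tilde Y(t)+\tilde E(T-t)By)-u(t,\tilde Y(t))-\langle u_x(t,\tilde Y(t)),\tilde E(T-t)By\rangle_H$. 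Now I would substitute, for $x=\tilde Y(t)$, the backward equation of Proposition~\ref{prop:backwardKolm}, namely $\partial_t u(t,x)=-\int_{U_1}[u(t,x+E(T-t)By)-u(t,x)-\langle u_x(t,x),E(T-t)By\rangle_H]\,\nu(\dl y)$. The terms $u(t,\tilde Y(t))$ cancel and the two linear terms combine, leaving exactly the integrand in \eqref{eq:errorRep2}. Using $u(T,\arg)=G$ and $\tilde Y(T)=\tilde X(T)$ on the left-hand side and subtracting the identity $\bE\,G(X(T))=\bE\,u(0,E(T)X_0)$ from the previous paragraph yields \eqref{eq:errorRep2}.

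For the finiteness \eqref{eq:errorRep1} I would use the second-order Taylor bound: since $M:=\sup_{x}\nnrm{G''(x)}{\bo(H)}<\infty$ and $\nnrm{u_{xx}(t,x)}{\bo(H)}\le M$ by \eqref{eq:u_xu_xx}, writing $a=\tilde Y(t)$, $b=E(T-t)By$ and $c=(\tilde E(T-t)B-E(T-t)B)y$ and applying the mean value theorem twice gives
\[
\big|u(t,a+b+c)-u(t,a+b)-\langle u_x(t,a),c\rangle_H\big|\le M\big(\nnrm{b}{H}+\nnrm{c}{H}\big)\nnrm{c}{H},
\]
a bound that does \emph{not} depend on $\omega$. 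Integrating in $y$ and using Cauchy--Schwarz together with the isometry $\int_{U_1}\nnrm{\Phi y}{H}^2\,\nu(\dl y)=\nnrm{\Phi}{\hs(U_0,H)}^2$ (Lemma~\ref{lem:integrandIsomorphism} and Remark~\ref{not:Phix}) bounds the inner integral by $M\big(\nnrm{E(T-t)B}{\hs(U_0,H)}+\nnrm{(\tilde E-E)(T-t)B}{\hs(U_0,H)}\big)\nnrm{(\tilde E-E)(T-t)B}{\hs(U_0,H)}$; a final Cauchy--Schwarz in $t$ with \eqref{eq:assEB1} and \eqref{eq:assEBtilde1} closes the argument, the expectation being harmless as the bound is deterministic.

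The main obstacle will be the rigorous application of \cite[Theorem~3.6]{MRT13}: that It\^o formula is formulated for integrands against a Poisson random measure, whereas $Y$ and $\tilde Y$ are defined through integration against the Hilbert-space-valued L\'evy process $L$. I would therefore first pass between the two pictures via the isometric identifications of Lemma~\ref{lem:integrandIsomorphism} and Remark~\ref{not:Phix} (rewriting $\int_0^t\tilde E(T-s)B\,\dl L(s)$ as an integral against the compensated Poisson random measure with integrand $(s,y)\mapsto\tilde E(T-s)By$), as carried out in Appendix~\ref{sec:PRM+SI}. Within this, the delicate points are verifying the integrability hypotheses of the It\^o formula---so that the compensated jump term is a genuine mean-zero martingale and all $\dl t\otimes\nu$-integrals converge---which rest on the quadratic and linear growth \eqref{eq:growthGG'} of $G,G'$ inherited by $u,u_x$, the uniform bound on $u_{xx}$, the membership $\tilde Y(t)\in L^2(\Omega;H)$, and It\^o's isometry \eqref{eq:ItoIsom}; and, underpinning everything, the backward Kolmogorov equation of Proposition~\ref{prop:backwardKolm}, whose availability for this class of processes is exactly what makes the L\'evy case harder than the Gaussian one.
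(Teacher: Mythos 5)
Your overall strategy is the one the paper itself follows: reduce to the drift-free processes $Y,\tilde Y$, use independence of increments to get $\bE\,G(X(T))=\bE\,u(0,E(T)X_0)$, rewrite $\tilde Y$ as a compensated Poisson integral via Lemma~\ref{lem:integrandIsomorphism} and Appendix~\ref{sec:PRM+SI}, apply It\^o's formula \cite[Theorem~3.6]{MRT13} to $u(t,\tilde Y(t))$, and cancel the drift with the backward Kolmogorov equation; your Taylor/Cauchy--Schwarz argument for \eqref{eq:errorRep1} is also the paper's. However, there is a genuine gap in how you handle the generality of $G$. The theorem assumes only $\sup_x\nnrm{G''(x)}{\bo(H)}<\infty$, so $G$ and $G'$ may be unbounded (quadratic, resp.\ linear growth). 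Proposition~\ref{prop:backwardKolm}, which you invoke, is stated and proved only for $G\in C_{\operatorname b}^2(H,\bR)$, and this restriction is not cosmetic: when $u_x$ has linear growth, the jump integrand $u(t,\tilde Y(t-)+\tilde E(T-t)By)-u(t,\tilde Y(t-))$ in the It\^o formula is only bounded by $C\big(1+\nnrm{\tilde Y(t-)}H+\nnrm{\tilde E(T-t)By}H\big)\nnrm{\tilde E(T-t)By}H$, and verifying that it lies in $L^2(\Omega_T\times U_1,\bP_T\otimes\nu;\bR)$ --- which is exactly what makes the compensated term a mean-zero $L^2$ martingale, the step where you ``drop'' it --- produces the term $\int_0^T\int_{U_1}\nnrm{\tilde E(T-t)By}H^4\,\nu(\dl y)\,\dl t$. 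This is a fourth-moment quantity, and under the standing assumptions \eqref{eq:assnu} and \eqref{eq:assEBtilde1} only second moments of $\nu$ are available, so it can be infinite. Hence your plan of checking the integrability hypotheses ``from the growth bounds \eqref{eq:growthGG'}, the $L^2$-membership of $\tilde Y(t)$ and It\^o's isometry'' breaks down precisely at the point where it is needed. The paper resolves this with a two-step argument that your proposal is missing: first prove \eqref{eq:errorRep2} for $G\in C_{\operatorname b}^2(H,\bR)$, where boundedness of $u$ and $u_x$ makes all jump integrands square-integrable using second moments alone; then treat general $G$ via the truncation $G_\epsilon(x):=e^{-\epsilon\nnrm{x}H^2}G(x)\in C_{\operatorname b}^2(H,\bR)$, which satisfies $\sup_{\epsilon\in(0,1]}\sup_x\nnrm{G_\epsilon''(x)}{\bo(H)}<\infty$, and pass to the limit $\epsilon\to0$ by dominated convergence.

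A second, smaller gap: you apply the It\^o formula on all of $[0,T]$, but $u_t$ and the backward Kolmogorov identity \eqref{eq:backwardKolm1} are only available on $[0,T)\times H$; Assumption~\ref{ass:abstractSetting}(v) controls $E(T-t)B$ uniformly only for $t\leq T-\epsilon$, and the bound \eqref{eq:backwardKolm0} may degenerate as $t\to T$, so the $\dl t$-integral of $\partial_t u(t,\tilde Y(t))$ in your expansion is not known to be well defined up to $T$. The paper therefore applies It\^o's formula on $[0,T']$ with $T'<T$ and recovers $T'=T$ by a limiting argument using the stochastic continuity of $\tilde Y$, the continuity of $u$, and the integrability statement \eqref{eq:errorRep1} itself; your proof needs the same detour.
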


\begin{remark}
The terms $E(T-t)By$ and $\tilde E(T-t) By$ appearing in \eqref{eq:errorRep1} and \eqref{eq:errorRep2} are defined for $\lambda\otimes\nu$-almost all $(t,y)\in[0,T]\times U_1$. This follows from \eqref{eq:assEB1}, \eqref{eq:assEBtilde1}, Lemma~\ref{lem:integrandIsomorphism} and Remark~\ref{not:Phix},
\end{remark}

We will prove Theorem~\ref{thm:errorRep} in the next subsection. Let us briefly record an alternative representation of $e(T)$ which follows from Taylor's formula.
It will be the starting point for our error estimates in Sections \ref{sec:Heat} and \ref{sec:Wave}. For $t\in [0,T]$, $\theta\in[0,1]$ and $y\in U_1$ set
\begin{align*}
F(t)&:=\tilde E(t) B-E(t)B,\\
\Psi_1(t,\theta,y)&:=(1-\theta)\Big\langle u_{xx}\big(t,\tilde Y(t)+ E(T-t)By+\theta F(T-t)y\big)F(T-t)y\,,\,F(T-t)y\Big\rangle_H,\\
\Psi_2(t,\theta,y)&:=\Big\langle u_{xx}\big(t,\tilde Y(t)+\theta E(T-t)By\big)E(T-t)By\,,\,F(T-t)y\Big\rangle_H.
\end{align*}

\begin{corollary}\label{cor:errorRep}
In the setting of Theorem~\ref{thm:errorRep} we have
\begin{equation}\label{eq:errorRep3}
\bE\int_0^T\int_{U_1}\int_0^1\big\{|\Psi_1(t,\theta,y)|+|\Psi_2(t,\theta,y)|\big\}\,\dl\theta\,\nu(\dl y)\,\dl t<\infty,
\end{equation}
and the following alternative error representation holds:
\begin{equation}\label{eq:errorRep4}
\begin{aligned}
e(T)&=\bE\big\{u(0,\tilde E(T)X_0)-u(0,E(T)X_0)\big\}\\
&\quad +\bE\int_0^T\int_{U_1}\int_0^1\big\{\Psi_1(t,\theta,y)+\Psi_2(t,\theta,y)\big\}\,\dl\theta\,\nu(\dl y)\,\dl t.
\end{aligned}
\end{equation}
\end{corollary}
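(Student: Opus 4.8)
The plan is to obtain \eqref{eq:errorRep4} from the representation \eqref{eq:errorRep2} of Theorem~\ref{thm:errorRep} by a two-step Taylor expansion of the inner integrand, and to verify the integrability \eqref{eq:errorRep3} using the uniform bound on $u_{xx}$ together with the Hilbert--Schmidt isometry of Lemma~\ref{lem:integrandIsomorphism}. Fix $t\in[0,T]$ and $y\in U_1$, abbreviate $a:=\tilde Y(t)$, $b:=E(T-t)By$ and $c:=F(T-t)y$, so that $\tilde E(T-t)By=b+c$, and denote by $I(t,y)$ the inner integrand in \eqref{eq:errorRep2}, i.e.
\[
I(t,y)=u(t,a+b+c)-u(t,a+b)-\langle u_x(t,a),c\rangle_H.
\]
Since $u(t,\arg)\in C^2(H,\bR)$ with the derivatives given in \eqref{eq:u_xu_xx}, Taylor's formula with integral remainder applied to $s\mapsto u(t,a+b+sc)$ gives
\[
u(t,a+b+c)-u(t,a+b)=\langle u_x(t,a+b),c\rangle_H+\int_0^1\Psi_1(t,\theta,y)\,\dl\theta,
\]
while the fundamental theorem of calculus applied to $s\mapsto u_x(t,a+sb)$ yields
\[
\langle u_x(t,a+b)-u_x(t,a),c\rangle_H=\int_0^1\langle u_{xx}(t,a+\theta b)b,c\rangle_H\,\dl\theta=\int_0^1\Psi_2(t,\theta,y)\,\dl\theta.
\]
Subtracting, the two right-hand sides combine to the pointwise identity $I(t,y)=\int_0^1\{\Psi_1(t,\theta,y)+\Psi_2(t,\theta,y)\}\,\dl\theta$, valid for every $\omega$ and $\lambda\otimes\nu$-almost every $(t,y)$.

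For the integrability \eqref{eq:errorRep3} I would first record, from \eqref{eq:u_xu_xx} and $\sup_{x\in H}\nnrm{G''(x)}{\bo(H)}<\infty$, the uniform bound $\nnrm{u_{xx}(t,x)}{\bo(H)}\leq M:=\sup_{x\in H}\nnrm{G''(x)}{\bo(H)}<\infty$ for all $(t,x)$, whence the $\omega$-independent estimates
\[
|\Psi_1(t,\theta,y)|\leq M\,\nnrm{F(T-t)y}{H}^2,\qquad
|\Psi_2(t,\theta,y)|\leq M\,\nnrm{E(T-t)By}{H}\,\nnrm{F(T-t)y}{H}.
\]
Using the identification of Remark~\ref{not:Phix} and the isometry $\int_{U_1}\nnrm{\Phi y}{H}^2\,\nu(\dl y)=\nnrm{\Phi}{\hs(U_0,H)}^2$ from Lemma~\ref{lem:integrandIsomorphism}, together with the Cauchy--Schwarz inequality in the $\nu$- and then in the $t$-variable, the two resulting double integrals are bounded by $L^2([0,T])$-norms of $F=\tilde E(\arg)B-E(\arg)B$ and of $E(\arg)B$. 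These are finite by \eqref{eq:assEB1}, \eqref{eq:assEBtilde1} and the triangle inequality, and the bounds are deterministic, so taking $\bE$ and integrating in $\theta$ over $[0,1]$ gives \eqref{eq:errorRep3}.

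Finally, the finiteness in \eqref{eq:errorRep3} legitimises Fubini's theorem, so the $\theta$-integral may be interchanged with the $\bP_T\otimes\nu$-integration. Substituting the pointwise identity into \eqref{eq:errorRep2} and performing this interchange yields \eqref{eq:errorRep4}; the deterministic boundary term $\bE\{u(0,\tilde E(T)X_0)-u(0,E(T)X_0)\}$ is carried over unchanged. The only points needing care are that the representatives $E(T-t)By$ and $F(T-t)y$ are the $\lambda\otimes\nu$-a.e.\ defined ones coming from Lemma~\ref{lem:integrandIsomorphism} (so that the pointwise Taylor identity and the isometry refer to the same versions), and the verification that $u(t,\arg)$ is genuinely $C^2$ so that Taylor's theorem applies --- both already secured by the discussion around \eqref{eq:u_xu_xx}. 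I expect the Cauchy--Schwarz/Hilbert--Schmidt bound for $\Psi_2$ to be the only genuinely computational step; the remainder is bookkeeping.
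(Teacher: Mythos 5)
Your proposal is correct and follows essentially the same route as the paper: the same add-and-subtract of $\langle u_x(t,\tilde Y(t)+E(T-t)By),F(T-t)y\rangle_H$, Taylor's formula with integral remainder giving $\Psi_1$ and the fundamental theorem of calculus for $u_x$ giving $\Psi_2$, and then \eqref{eq:errorRep3} from the uniform bound on $u_{xx}$, the isometry of Lemma~\ref{lem:integrandIsomorphism}, Cauchy--Schwarz, \eqref{eq:assEB1} and \eqref{eq:assEBtilde1}. You merely spell out the integrability estimate that the paper leaves to the reader (and your appeal to Fubini is harmless but unnecessary, since substituting the pointwise identity into \eqref{eq:errorRep2} already produces the iterated integral in \eqref{eq:errorRep4}).
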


\begin{proof}
The integrand of the iterated integral in \eqref{eq:errorRep2} can be rewritten as
\begin{align*}
u\big(t,\tilde Y(t)+&\tilde E(T-t) By\big)-u\big(t,\tilde Y(t)+E(T-t)By\big)-\big\langle u_x(t,\tilde Y(t)),F(T-t)y\big\rangle_H\\
&=\Big\{u\big(t,\tilde Y(t)+\tilde E(T-t) By\big)-u\big(t,\tilde Y(t)+E(T-t)By\big)\\
&\quad-\big\langle u_x\big(t,\tilde Y(t)+E(T-t)By\big),F(T-t)y\big\rangle_H\Big\}\\
&\quad+\big\langle u_x\big(t,\tilde Y(t)+E(T-t)By\big)-u_x(t,\tilde Y(t)),F(T-t)y\big\rangle_H\\
&=\int_0^1\big\{\Psi_1(t,\theta,y)+\Psi_2(t,\theta,y)\big\}\,\dl\theta,
\end{align*}
where the last step is due to Taylor's formula. By \eqref{eq:errorRep1} we have
\[\bE\int_0^T\int_{U_1}\Big|\int_0^1\big\{\Psi_1(t,\theta,y)+\Psi_2(t,\theta,y)\big\}\,\dl\theta\Big|\,\nu(\dl y)\,\dl t<\infty.\]
The stronger assertion \eqref{eq:errorRep3} follows from the boundedness of $G'':H\to\bo(H)$, Lemma~\ref{lem:integrandIsomorphism}, \eqref{eq:assEB1} and \eqref{eq:assEBtilde1}.
\end{proof}

\subsection{Proof of the error representation formula}

In this subsection, we give the proof of Theorem~\ref{thm:errorRep}.

For $\xi\in L^2(\Omega,\cF_t,\bP;H)$ we have
\begin{equation*}\label{eq:Eu(t,xi)}
\bE\big(G(Z(T,t,\xi))\big)
=\int_H\int_HG(x+y)\,\bP_{\int_t^TE(T-s)B\,\dl L(s)}(\dl y)\bP_\xi(\dl x)
=\bE\big(u(t,\xi)\big)
\end{equation*}
by \eqref{eq:defZttauxi}, \eqref{eq:defuxt}, the independence of $\int_t^TE(T-s)B\,\dl L(s)$ and $\cF_t$, and Fubini's theorem.
Since $X(T)=Y(T)$ and $\tilde X(T)=\tilde Y(T)$ it follows that
\begin{equation}\label{eq:proofErrRep1}
\begin{aligned}
e(T)&=\bE\big(G(\tilde Y(T))-G(Y(T))\big)\\
&=\bE\big(G(Z(T,T,\tilde Y(T)))-G(Z(T,0,Y(0)))\big)\\
&=\bE\big(u(T,\tilde Y(T))-u(0,Y(0))\big)\\
&=\bE\big(u(0,\tilde Y(0))-u(0,Y(0))\big)+\bE\big(u(T,\tilde Y(T))-u(0,\tilde Y(0))\big).
\end{aligned}
\end{equation}
By \eqref{eq:defY} and \eqref{eq:defYtilde}, the first term in the last line equals $\bE(u(0,\tilde E(T)X_0)-u(0,E(T)X_0))$.

To handle the second term in the last line of \eqref{eq:proofErrRep1}, we first assume that $G:H\to\bR$ and $G_x:H\to H$ are bounded, so that $G\in C^2_{\operatorname b}(H,\bR)$. We will remove this restriction later on. We now want to apply It\^{o}'s formula to the function $(t,x)\mapsto u(t,x)$ and the martingale $\tilde Y=(\tilde Y(t))_{t\in[0,T]}$. For this we need the following properties of $u$.

\begin{proposition}\label{prop:backwardKolm}
Let Assumption~\ref{ass:abstractSetting} hold and $G\in C_{\operatorname b}^2(H,\bR)$. The function $u:[0,T]\times H\to\bR,\;(t,x)\mapsto u(t,x)$ defined in \eqref{eq:defuxt} and its Fr\'{e}chet partial derivatives $u_x$, $u_{xx}$ are continuous and bounded on $[0,T]\times H$. The time derivative $u_t$ of $u$ exists on $[0,T)\times H$ and is continuous.
Moreover, for every $\epsilon>0$ there exists some $C_\epsilon<\infty$ such that
\begin{equation}\label{eq:backwardKolm0}
\int_{U_1} \big|u\big(t,x+E(T-t)By\big)- u(t,x)-\big\langle u_x(t,x),E(T-t)By\big\rangle_H\big|\,\nu(\dl y)<C_\epsilon
\end{equation}
for all $t\in[0,T-\epsilon]$, and $u$ satisfies the backward Kolmogorov equation
\begin{equation}\label{eq:backwardKolm1}
\left.
\begin{aligned}
u_t(t,x)&=-\int_{U_1} \Big\{u\big(t,x+E(T-t)By\big)-u(t,x)-\big\langle u_x(t,x),E(T-t)By\big\rangle_H\Big\}\,\nu(\dl y),\\
&\hspace{2cm}(t,x)\in [0,T)\times H,\\
u(T,x)&=G(x),\quad x\in H.
\end{aligned}
\;\right\}
\end{equation}
\end{proposition}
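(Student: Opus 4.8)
The plan is to prove Proposition~\ref{prop:backwardKolm} by first establishing the regularity of $u$ in both variables, then deriving \eqref{eq:backwardKolm0}, and finally identifying the time derivative $u_t$ via the backward Kolmogorov equation. I would begin with the spatial regularity. Since $Z(T,t,x)=x+\int_t^TE(T-s)B\,\dl L(s)$ depends on $x$ only through the additive shift, differentiating \eqref{eq:defuxt} under the expectation yields the formulae \eqref{eq:u_xu_xx}, namely $u_x(t,x)=\bE\,G'(Z(T,t,x))$ and $u_{xx}(t,x)=\bE\,G''(Z(T,t,x))$; the interchange of differentiation and expectation is justified by dominated convergence using $G\in C_{\operatorname b}^2(H,\bR)$ and the integrability of $Z(T,t,x)$ from \eqref{eq:assEB1} and It\^o's isometry \eqref{eq:ItoIsom}. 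Boundedness of $u$, $u_x$, $u_{xx}$ is then immediate from the boundedness of $G$, $G'$, $G''$. For continuity in $(t,x)$ I would show $Z(T,t,x)\to Z(T,t_0,x_0)$ in $L^2(\Omega;H)$ as $(t,x)\to(t_0,x_0)$, using strong continuity of $(E(t))$ from \ref{ass:abstractSetting}(iv), the dominating function $\Phi_\epsilon$ from \ref{ass:abstractSetting}(v) to control the integrand near the endpoints via It\^o's isometry, and then invoke continuity of $G,G',G''$ together with a uniform integrability argument.

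Next I would establish the integral bound \eqref{eq:backwardKolm0}. The integrand is the second-order Taylor remainder of $u(t,\arg)$ at $x$ in the direction $E(T-t)By$; by Taylor's formula and \eqref{eq:u_xu_xx} it can be written as
\[
\int_0^1(1-\theta)\big\langle u_{xx}\big(t,x+\theta E(T-t)By\big)E(T-t)By\,,\,E(T-t)By\big\rangle_H\,\dl\theta,
\]
whose absolute value is bounded by $\tfrac12\sup_{z}\nnrm{G''(z)}{\bo(H)}\,\nnrm{E(T-t)By}{H}^2$. Integrating against $\nu$ and using the identification of $\hs(U_0,H)$-operators with $L^2(U_1,\nu;H)$-functions from Lemma~\ref{lem:integrandIsomorphism} and Remark~\ref{not:Phix}, together with \ref{ass:abstractSetting}(v), gives $\int_{U_1}\nnrm{E(T-t)By}{H}^2\,\nu(\dl y)=\nnrm{E(T-t)B}{\hs(U_0,H)}^2\leq\nnrm{\Phi_\epsilon}{\hs(U_0,H)}^2$ uniformly for $t\in[0,T-\epsilon]$, which yields \eqref{eq:backwardKolm0} with $C_\epsilon$ depending only on $\epsilon$ and $\sup_z\nnrm{G''(z)}{\bo(H)}$.

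Finally, for the Kolmogorov equation itself, the idea is to exploit the Markov/convolution structure: for $t<t'$ one has the flow identity $Z(T,t,x)\stackrel{d}{=}x+\big(Z(T,t',x')-x'\big)+\int_t^{t'}E(T-s)B\,\dl L(s)$, so that $u(t,x)=\bE\,u\big(t',x+\int_t^{t'}E(T-s)B\,\dl L(s)\big)$. Differentiating this semigroup-type relation in $t'$ at $t'=t$ should produce exactly the jump generator on the right-hand side of \eqref{eq:backwardKolm1}. Rather than differentiate directly, the cleaner route is to compute the increment $u(t',\xi)-u(t,\xi)$ for the frozen argument via the L\'evy--It\^o decomposition of the stochastic integral over $[t,t']$: using the characteristic exponent \eqref{eq:charFunctionL} one obtains that the law of $\int_t^{t'}E(T-s)B\,\dl L(s)$ is infinitely divisible with the pushed-forward L\'evy measure, and the standard computation for the generator of a jump process gives the integral operator in \eqref{eq:backwardKolm1}; the continuity of $u_t$ follows from the continuity of $u_x,u_{xx}$ and of $t\mapsto E(T-t)B$ in $\hs(U_0,H)$ on $[0,T-\epsilon]$. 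The terminal condition $u(T,x)=G(x)$ is clear since $Z(T,T,x)=x$.

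I expect the main obstacle to be the rigorous derivation of \eqref{eq:backwardKolm1}, specifically justifying the differentiation in the time variable $t$. The difficulty is twofold: the integrand $E(T-s)B$ itself varies with $s$ (so this is not a time-homogeneous semigroup and one cannot simply cite a standard generator theorem), and one must control the singularity as $t\to T$, which is precisely why the bound \eqref{eq:backwardKolm0} is only uniform on $[0,T-\epsilon]$ and why \ref{ass:abstractSetting}(v) is needed. Handling this carefully—showing that the difference quotient $\tfrac{1}{t'-t}\big(u(t',x)-u(t,x)\big)$ converges to the claimed integral with the error controlled uniformly—will be the technical heart of the argument, and is likely where an It\^o formula for Poisson-random-measure integrals (as referenced via \cite{MRT13} and the appendix) is invoked to make the generator computation precise.
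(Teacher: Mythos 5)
Your treatment of the regularity statements and of the bound \eqref{eq:backwardKolm0} matches the paper's proof in substance: differentiation under the expectation to get \eqref{eq:u_xu_xx}, boundedness from $G\in C_{\operatorname b}^2(H,\bR)$, continuity via $L^2(\Omega;H)$-continuity of $(t,x)\mapsto Z(T,t,x)$ and It\^o's isometry, and the Taylor estimate combined with Lemma~\ref{lem:integrandIsomorphism}, Remark~\ref{not:Phix} and Assumption~\ref{ass:abstractSetting}(v) to obtain $C_\epsilon=\tfrac12\sup_{x\in H}\nnrm{G''(x)}{\bo(H)}\nnrm{\Phi_\epsilon}{\hs(U_0,H)}^2$. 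Up to there the proposal is sound.

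The gap is the derivation of \eqref{eq:backwardKolm1} itself, which is precisely the part you defer. Two concrete ingredients are missing. First, the mechanism that actually produces the integral operator: ``the standard computation for the generator of a jump process'' cannot be cited, because (as you yourself note) the problem is time-inhomogeneous --- the integrand $E(T-s)B$ varies with $s$ and there is no Markov semigroup whose generator one could invoke; this identity is the thing to be proved. The paper's substitute is the time-reversal observation \eqref{eq:backwardKolm2}: setting $v(t,x):=\bE\,G\big(x+\int_0^t E(s)B\,\dl L(s)\big)=u(T-t,x)$, the process $M(t)=\int_0^t E(s)B\,\dl L(s)$ is a genuine square-integrable martingale with a Poisson-random-measure representation, so the It\^o formula of \cite[Theorem~3.6]{MRT13} applied to $y\mapsto G(x+y)$, followed by taking expectations (which annihilates the $q$-integral), gives the \emph{integrated} identity \eqref{eq:backwardKolm5} valid for all $t$ at once; \eqref{eq:backwardKolm1} then follows by the fundamental theorem of calculus. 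Your difference-quotient scheme would instead need this It\^o-formula step carried out exactly on each interval $[t,t']$ together with uniform control of the error as $t'\to t$, none of which is supplied. Second, and more subtly, passing from the integrated identity (or from your difference quotients) to the derivative requires continuity in $s$ of the map $s\mapsto\int_{U_1}\{v(s,x+E(s)By)-v(s,x)-\langle v_x(s,x),E(s)By\rangle_H\}\,\nu(\dl y)$. You assert this ``follows from the continuity of $u_x$, $u_{xx}$ and of $t\mapsto E(T-t)B$ in $\hs(U_0,H)$'', but the obstruction the paper flags is that $E(s)By$ is defined only for $\lambda\otimes\nu$-almost every $(s,y)$ through the identification of Lemma~\ref{lem:integrandIsomorphism}, so for fixed $y$ the map $s\mapsto E(s)By$ need not even be defined, and the continuity theorem for parameter-dependent integrals does not apply. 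The paper resolves this with a finite-rank approximation: projecting $y$ onto $\operatorname{span}\{f_1,\ldots,f_k\}\subset U_0$ makes $E(s)B\Pi_k y$ defined pointwise and continuous by strong continuity, and Assumption~\ref{ass:abstractSetting}(v) yields convergence of the corresponding integrals uniformly on $[\epsilon,T]\times H$. Some argument of this kind is indispensable, and its absence leaves the central step of the proposal unproved.
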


\begin{proof}

We begin with the continuity and boundedness of $u$, $u_x$ and $u_{xx}$. The boundedness is obvious by the definition \eqref{eq:defuxt} of $u$ and by \eqref{eq:u_xu_xx}. Pick $0\leq s\leq t\leq T$, $x,y\in H$. Using \eqref{eq:defuxt}, Jensen's inequality, the mean value theorem, \eqref{eq:defZttauxi} and It\^{o}'s isometry,  we have
\begin{equation*}
\begin{aligned}
|u(t,x)-u(s,y)|^2&\leq\bE\big(|G(Z(T,t,x))-G(Z(T,s,y))|^2\big)\\
&\leq \sup_{x\in H}\nnrm{G'(x)}H^2\,\bE\Big(\gnnrm{x-y-\int_s^t E(T-r)B\,\dl L(r)}H^2\Big)\\
&\leq 2\sup_{x\in H}\nnrm{G'(x)}H^2\Big(\nnrm{x-y}H^2+\int_s^t\nnrm{E(T-r)B}{\hs(U_0,H)}^2\,\dl r\Big).
\end{aligned}
\end{equation*}
Thus, the continuity of $u$ follows from \eqref{eq:assEB1} and the boundedness of $G'$.
Since $u_x(t,x)=\bE\,G'(Z(T,t,x))$, the continuity of $u_x:[0,T]\times H\to H$ follows analogously from the boundedness of $G''$.
To show the continuity of $u_{xx}:[0,T]\times H\to\bo(H)$, define $F\in C_{\operatorname b}(H\times H;\bR)$ by
\[F(x,y):=\nnrm{G''(x)-G''(y)}{\bo(H)},\quad x,\,y\in H,\]
and fix $(t,x)\in[0,T]\times H$, $((t_k,x_k))_{k\in\bN}\subset[0,T]\times H$ with $(t_k,x_k)\to (t,x)$ as $k\to\infty$. Note that $Z(T,t_k,x_k)\to Z(T,t,x)$ in $L^2(\Omega;H)$ by It\^{o}'s isometry. As a consequence, $(Z(T,t,x),Z(T,t_k,x_k))\to (Z(T,t,x),Z(T,t,x))$ in distribution (on $H\times H$) and we obtain
\begin{align*}
\nnrm{u_{xx}(t,x)-u_{xx}(t_k,x_k)}{\bo(H)}
&\leq \bE\, F\big(Z(T,t,x),Z(T,t_k,x_k)\big)\\
&\xrightarrow{k\to\infty}\bE\, F\big(Z(T,t,x),Z(T,t,x)\big)=0,
\end{align*}
yielding the continuity of $u_{xx}$.

By Taylor's formula and Lemma~\ref{lem:integrandIsomorphism},
\begin{equation*}
\begin{aligned}
\int_{U_1} \big|u\big(t,x+E(T-t)By&\big)- u(t,x)-\big\langle u_x(t,x),E(T-t)By\big\rangle_H\big|\,\nu(\dl y)\\
&\leq\;\frac12\sup_{x\in H}\nnrm{G''(x)}{\bo(H)}\int_{U_1}\nnrm{E(T-t)By}{H}^2\nu(\dl y)\\
&=\;\frac12\sup_{x\in H}\nnrm{G''(x)}{\bo(H)}\nnrm{E(T-t)B}{\hs(U_0,H)}^2.
\end{aligned}
\end{equation*}
Using Assumption~\ref{ass:abstractSetting}(v), condition \eqref{eq:backwardKolm0} follows with $$C_\epsilon=1/2\sup_{x\in H}\nnrm{G''(x)}{\bo(H)}\nnrm{\Phi_\epsilon}{\hs(U_0,H)}^2.$$

In order to verify the Kolmogorov equation \eqref{eq:backwardKolm1}, we first note that for fixed $t\in[0,T]$ the $H$-valued random variables $\int_0^tE(s)B\,\dl L(s)$ and $\int_{T-t}^{T} E(T-s)B\,\dl L(s)$ have the same distribution, so that
\begin{equation}\label{eq:backwardKolm2}
v(t,x):=\bE\,G\big(x+\int_0^{t}E(s)B\,\dl L(s)\big)=u(T-t,x),\quad (t,x)\in [0,T]\times H.
\end{equation}
Next, we fix $x\in H$ and apply It\^{o}'s formula \cite[Theorem~3.6]{MRT13} to the function $y\mapsto G(x+y)$ and the martingale $M=(M(t))_{t\in[0,T]}:=(\int_0^t E(s)B\,\dl L(s))_{t\in[0,T]}\in\cM^2_T(H)$. Note that $M$ fits into the setting of \cite{MRT13} since it has the representation
\[M(t)=\int_0^t\int_{U_1} E(s)By\,q(\dl s,\dl y),\quad t\in[0,T],\]
where $q$ is the compensated Poisson random measure on $[0,\infty)\times U_1$ associated to $L$; see the appendix for details.
We obtain
\begin{equation}\label{eq:backwardKolm3}
\begin{aligned}
G(x+M(t))= G(x)&+\int_0^t\int_{U_1}\big\{G\big(x+M(s-)+E(s)By\big)-G\big(x+M(s-)\big)\big\}\,q(\dl s,\dl y)\\
&+\int_0^t\int_{U_1}\big\{G\big(x+M(s)+E(s)By\big)-G\big(x+M(s)\big)\\
&\hspace{4.4cm}-\big\langle G'\big(x+M(s)\big),E(s)By\big\rangle_H\big\}\,\nu(\dl y)\,\dl s,
\end{aligned}
\end{equation}
where the integrand appearing in the integral w.r.t.\ $q$ belongs to $L^2(\Omega_T\times U_1,\bP_T\otimes\nu;\bR)$ as a consequence of Taylor's formula, the boundedness of $G'$, Lemma~\ref{lem:integrandIsomorphism} and \eqref{eq:assEB1}. Similarly, the second integral in \eqref{eq:backwardKolm3} exists for all $\omega\in\Omega$ and belongs to $L^1(\Omega;\bR)$
since
\begin{equation*}\label{eq:backwardKolm4}
\begin{aligned}
\int_0^t\int_{U_1}\big|G\big(x+M(s)+E(s)By\big)-G\big(&x+M(s)\big)-\big\langle G'\big(x+M(s)\big),E(s)By\big\rangle_H\big|\,\nu(\dl y)\,\dl s\\
&\leq\;\frac12\sup_{x\in H}\nnrm{G''(x)}{\bo(H)}\int_0^t\nnrm{E(s)B}{\hs(U_0,H)}^2\dl s.
\end{aligned}
\end{equation*}
Taking expectations in \eqref{eq:backwardKolm3} and using the martingale property of the integral w.r.t.\ $q$ yields
\begin{equation}\label{eq:backwardKolm5}
v(t,x)=G(x)+\int_0^t\int_{U_1}\big\{v(s,x+E(s)By)-v(s,x)-\langle v_x(s,x),E(s)By\rangle_H\big\}\,\nu(\dl y)\,\dl s.
\end{equation}
By the fundamental theorem of calculus, \eqref{eq:backwardKolm1} follows from \eqref{eq:backwardKolm2}, \eqref{eq:backwardKolm5} if the mapping
\begin{equation}\label{eq:backwardKolm7}
(0,T]\ni s\to \int_{U_1}\big\{v(s,x+E(s)By)-v(s,x)-\langle v_x(s,x),E(s)By\rangle_H\big\}\,\nu(\dl y)\in\bR.
\end{equation}
is continuous.

Note that we cannot apply directly the continuity theorem for parameter-dependent integrals to show the continuity of the mapping \eqref{eq:backwardKolm7}. The reason is that the term $E(s)By$ in the integral in \eqref{eq:backwardKolm7} is defined only in an $L^2([0,T]\times U_1,\lambda\otimes\nu;H)$-sense, cf.~Lemma~\ref{lem:integrandIsomorphism} and Remark~\ref{not:Phix}, so that we have no information about the continuity of $(0,T]\in s\mapsto E(s)By\in H$ for fixed $y\in U_1$. Therefore, we use an approximation argument: For $s\in(0,T]$, $x\in H$, $y\in U_1$ and $k\in\bN$ set
\begin{align*}
f(s,x,y)&:=v(s,x+E(s)By)-v(s,x)-\langle v_x(s,x),E(s)By\rangle_H,\\
f_k(s,x,y)&:=f(s,x,\Pi_ky),
\end{align*}
where $\Pi_k$ is the orthogonal projection of $U_1$ onto $\text{span}\{f_1,\ldots,f_k\}$, $(f_k)_{k\in\bN}\subset U_0$ being an orthonormal basis of $U_1$ as in Lemma~\ref{lem:integrandIsomorphism}. For fixed $x\in H$,  $f(s,x,y)$ is defined in an $L^2([0,T]\times U_1,\lambda\otimes\nu;\bR)$-sense whereas $f_k(s,x,y)$ is defined pointwise. The continuity theorem for parameter-dependent integrals and the strong continuity of $(E(t))_{t\geq 0}$ yield the continuity of in $\int_{U_1}f_k(s,x,y)\nu(\dl y)$ in $(s,x)\in[0,T]\times H$. Moreover, we have $f_k(s,x,\cdot)\stackrel{k\to\infty}{\longrightarrow} f(s,x,\cdot)$ in $L^1(U_1,\nu;\bR)$, uniformly in $(s,x)\in[\epsilon,T]\times H$ for all $\epsilon>0$. Indeed, setting $\Pi^ky:=y-\Pi_ky$ and using Taylor's theorem, Lemma~\ref{lem:integrandIsomorphism} and Assumption~\ref{ass:abstractSetting}(v), we obtain
\begin{align*}
&\int_{U_1}|f(s,x,y)-f_k(s,x,y)|\,\nu(\dl y)\\
&\leq\int_{U_1}\int_0^1\big|\big\langle v_{xx}\big(s,x+E(s)B(\Pi_ky+\theta\Pi^ky)\big) E(s)B\Pi^ky,E(s)B\Pi^ky\big\rangle_H\big|(1-\theta)\,\dl\theta\,\nu(\dl y)\\
&\quad+\int_{U_1}\int_0^1\big|\big\langle v_{xx}\big(s,x+\theta E(s)B\Pi_ky\big)E(s)B\Pi_ky,E(s)B\Pi^ky\big\rangle_H\big|\,\dl\theta\,\nu(\dl y)\\
&\leq\sup_{x\in H}\nnrm{G''(x)}{\bo(H)}\big(\nnrm{E(s)B\Pi^k}{\hs(U_0,H)}^2+\nnrm{E(s)B\Pi_k}{\hs(U_0,H)}\nnrm{E(s)B\Pi^k}{\hs(U_0,H)}\big)\\
&\leq\sup_{x\in H}\nnrm{G''(x)}{\bo(H)}\big(\nnrm{\Phi_\epsilon\Pi^k}{\hs(U_0,H)}^2+\nnrm{\Phi_\epsilon\Pi_k}{\hs(U_0,H)}\nnrm{\Phi_\epsilon\Pi^k}{\hs(U_0,H)}\big)
\end{align*}
for all $s\in[\epsilon,T]$ and some $\Phi_\epsilon\in\hs(U_0,H)$. The expression in the last line tends to zero as $k\to\infty$.
As a consequence, $\int_{U_1}f_k(s,x,y)\nu(\dl y)\xrightarrow{k\to\infty}\int_{U_1}f(s,x,y)\nu(\dl y)$ uniformly in $(s,x)\in[\epsilon,T]\times H$. Thus, the continuity of $\int_{U_1}f_k(s,x,y)\nu(\dl y)$ in $(s,x)\in[0,T]\times H$ implies the continuity of $\int_{U_1}f(s,x,y)\nu(\dl y)$ in $(s,x)\in(0,T]\times H$. In particular, we obtain the continuity of the mapping \eqref{eq:backwardKolm7} as well as the continuity of $u_t$ on $[0,T)\times H$.
\end{proof}

For $G\in C_{\operatorname b}^2(H,\bR)$, the regularity assertions in Propostition~\ref{prop:backwardKolm} allow us to apply It\^{o}'s formula \cite[Theorem~3.6]{MRT13} to the function $(t,x)\mapsto u(t,x)$ and the $H$-valued martingale $\tilde Y=(\tilde Y(t))_{t\in[0,T]}$ defined in \eqref{eq:defYtilde}. Note that $\tilde Y$ fits into the setting of \cite{MRT13} since it has the representation
\begin{equation}\label{eq:proofErrRep1.5}
\tilde Y(t)=\tilde E(T)X_0+\int_0^t\int_{U_1}\tilde E(T-s) B y\,q(\dl s,\dl y),\quad t\in[0,T],
\end{equation}
where again $q$ is the compensated Poisson random measure on $[0,\infty)\times U_1$ associated with $L$ as described in the appendix. Equality \eqref{eq:proofErrRep1.5} is a consequence of \eqref{eq:assEBtilde1}, Lemma~\ref{lem:integrandIsomorphism}, Remark~\ref{not:Phix} and
Lemma~\ref{lem:comparisonIntegrals}. For $T'\in(0,T)$ we obtain
\begin{equation}\label{eq:proofErrRep2}
\begin{aligned}
u(T',\tilde Y(T'))\,&=\,u(0,\tilde Y(0))+\int_0^{T'}u_t(t,\tilde Y(t))\,\dl t\\
&\quad+\int_0^{T'}\int_{U_1}\big\{u\big(t,\tilde Y(t-)+\tilde E(T-t) By\big)-u(t,\tilde Y(t-))\big\}\,q(\dl s,\dl y)\\
&\quad+\int_0^{T'}\int_{U_1}\big\{u\big(t,\tilde Y(t)+\tilde E(T-t) By\big)-u(t,\tilde Y(t))\\
&\quad-\big\langle u_x(t,\tilde Y(t)),\tilde E(T-t) By\big\rangle_H\big\}\,\nu(\dl y)\,\dl s.
\end{aligned}
\end{equation}
Using the boundedness of $u$, $u_x$ and $u_{xx}$, \eqref{eq:backwardKolm1}, \eqref{eq:assEB1} and applying similar arguments as in the proof of Proposition~\ref{prop:backwardKolm}, one sees that all terms in \eqref{eq:proofErrRep2} are well-defined and integrable w.r.t.\ $\bP$. Thus, we can take expectations and use the martingale property of the integral w.r.t.\ $q$ and the backward Kolmogorov equation \eqref{eq:backwardKolm1} to obtain
\begin{equation}\label{eq:proofErrRep3}
\begin{aligned}
\bE\big(&u(T',\tilde Y(T'))-u(0,\tilde Y(0))\big)=\\
&\bE\int_0^{T'}\int_{U_1}\Big\{u\big(t,\tilde Y(t)+\tilde E(T-t) By\big)-u\big(t,\tilde Y(t)+E(T-t)By\big)\\
&\qquad\qquad\qquad\qquad-\big\langle u_x(t,\tilde Y(t)),\big(\tilde E(T-t) B-E(T-t)B\big)y\big\rangle_H\Big\}\,\nu(\dl y)\,\dl t
\end{aligned}
\end{equation}
for all $T'\in(0,T)$. Taking the limit $T'\to T$ on both sides of \eqref{eq:proofErrRep3}, we can replace $T'$ by $T$. Here we used the stochastic continuity of $\tilde Y$ and the continuity of $u$ for the limit on the left hand side. For the limit on the right hand side we used \eqref{eq:errorRep1}, which is again a consequence of Taylor's formula, the boundedness of $G''$, \eqref{eq:assEB1}, \eqref{eq:assEBtilde1} and Lemma~\ref{lem:integrandIsomorphism}, using similar arguments as in the proof of Proposition~\ref{prop:backwardKolm}. The combination of \eqref{eq:proofErrRep1} and \eqref{eq:proofErrRep3} yields the error representation formula \eqref{eq:errorRep2} for the case $G\in C_{\operatorname b}^2(H,\bR)$.

Finally, we consider the general case of a test function $G\in C^2(H,\bR)$ such that $\sup_{x\in H}\nnrm{G''(x)}{\bo(H)}<\infty$.
For $\epsilon>0$, define $G_\epsilon\in C_{\operatorname b}^2(H,\bR)$ by
\begin{equation*}
G_\epsilon(x):=e^{-\epsilon\nnrm{x}H^2} G(x),\quad x\in H.
\end{equation*}
The Fr\'{e}chet derivatives $G_\epsilon'(x)\in H$ and $G_\epsilon''(x)\in\bo(H)$ are given by
\begin{align*}
G_\epsilon'(x)&=e^{-\epsilon\nnrm{x}H^2}\big(G'(x)-2\epsilon G(x)x\big),\\
G_{\epsilon}''(x)&=e^{-\epsilon\nnrm{x}H^2}\big[G''(x)+2\epsilon\big(\langle G'(x),\arg\rangle_H x-\langle x,\arg\rangle_H G'(x)\big)-4\epsilon^2 G(x)\langle x,\arg\rangle_Hx\big],
\end{align*}
so that the boundedness of $G_\epsilon:H\to\bR$, $G_\epsilon':H\to H$ and $G_\epsilon'':H\to\bo(H)$ follows from \eqref{eq:growthGG'}. Moreover, note that $G_\epsilon(x)\xrightarrow{\epsilon\to0}G(x)$ in $\bR$ and $G_\epsilon'(x)\xrightarrow{\epsilon\to0}G'(x)$ in $H$ for all $x\in H$, as well as
$
\sup_{\epsilon\in(0,1]}\sup_{x\in H}\nnrm{G_\epsilon''(x)}{\bo(H)}<\infty.
$
The latter is a consequence of \eqref{eq:growthGG'} and the standard estimate $\sup_{s>0}s^ne^{-s}<n!$, $n\in\bN$.  We set
\begin{equation}\label{eq:defuxtEps}
u_\epsilon(t,x):=\bE\,G_\epsilon(Z(T,t,x)),\quad (t,x)\in [0,T]\times H.
\end{equation}
By what has been shown above, the assertion of Theorem~\ref{thm:errorRep} holds with $u$ replaced by $u_\epsilon$. By a dominated convergence argument using \eqref{eq:growthGG'}, we obtain that $u_\epsilon(t,x)\xrightarrow{\epsilon\to0} u(t,x)$ in $\bR$ and $(u_\epsilon)_x(t,x)\xrightarrow{\epsilon\to0} u_x(t,x)$ in $H$ for all $(t,x)\in [0,T]\times H$. For all $t\in[0,T]$ and $y\in U_1$, we have
\begin{equation}\label{eq:proofErrRep4}
\begin{aligned}
\Big|u_\epsilon\big(t,&\tilde Y(t)+\tilde E(T-t) By\big)-u_\epsilon\big(t,\tilde Y(t)+E(T-t)By\big)\\
&\qquad-\big\langle (u_\epsilon)_x(t,\tilde Y(t)),\big(\tilde E(T-t) B-E(T-t)B\big)y\big\rangle_H\Big|\\
&\leq \sup_{\epsilon\in(0,1]}\sup_{x\in H}\nnrm{G_\epsilon''(x)}{\bo(H)}\Big(\nnrm{\tilde E(T-t) By}H^2+\nnrm{E(T-t)B}H^2\big)
\end{aligned}
\end{equation}
due to Taylor's theorem and the fact that $(u_\epsilon)_{xx}(t,x)=\bE G_\epsilon''(Z(T,t,x))$. Note that the term right hand side of \eqref{eq:proofErrRep4} is integrable w.r.t. $\dl t\,\nu(\dl y)$ as a consequence of \eqref{eq:assEB1}, \eqref{eq:assEBtilde1} and Lemma~\ref{lem:integrandIsomorphism}. Thus, considering \eqref{eq:errorRep1} and \eqref{eq:errorRep2} with $u$ replaced by $u_\epsilon$, we can use dominated convergence as $\epsilon\to0$  to finish the proof.

\section{Applications to parabolic equations}
\label{sec:Heat}

\subsection{The stochastic heat equation}\label{subsec:she} Here we give a detailed error analysis of a space-time discretization of the linear stochastic heat equation with additive L\'{e}vy noise.

Let $\dom\subset\bR^d$ be a bounded convex domain. 
Let $\Lambda:=-\Delta=-\sum_{j=1}^d\partial^2/\partial\xi_j^2$ be the Laplace operator on $L^2(\dom)$ with zero-Dirichlet boundary condition, i.e., with domain $D(\Lambda):=\{v\in H^1_0{(\dom)}:~\Lambda u\in L^2(\dom)\}$, where $\Lambda u$ is understood in the distributional sense, see \cite[Example 3.4.7]{ABHN11}. As usual, $H^n(\dom)$ denotes the $L^2$-Sobolev space of order $n\in\bN_0$ on $\dom$  and $H^1_0(\dom)$ is the $H^1(\dom)$-closure of the space $C_c^\infty(\dom)$ of compactly supported test functions.
Then, setting
\[H:=U:=L^2(\dom),\quad (A,D(A)):=(\Lambda,D(\Lambda)),\quad B:=\id{L^2(\dom)},\]
the abstract equation \eqref{eq:mainEq} becomes the stochastic heat equation \eqref{eq:SHE}. It is not difficult to see that the condition $\nnrm{\Lambda^{-1/2}Q^{1/2}}{\hs(H)}<\infty$ implies \eqref{eq:assEB1}, where
\begin{equation}\label{eq:E(t)SHE}
(E(t))_{t\geq0}:=(e^{-t\Lambda})_{t\geq0}\subset\bo(H)
\end{equation}
is the semigroup generated by $-A=-\Lambda$. Hence, there exists a unique weak solution $X=(X(t))_{t\geq0}$ to Eq.~\eqref{eq:SHE}, given by the variation-of-constants formula \eqref{eq:X}. Furthermore, for some $C>0$ independent of $T$,
\begin{equation}\label{eq:hstab}
\nnrm{X(T)}{L^2(\Omega,H)}\le C (\nnrm{\Lambda^{-1/2}Q^{1/2}}{\hs( H)}+\nnrm{X_0}{L^2(\Omega,H)}).
\end{equation}
In the sequel, we use the smoothness spaces $\dot H^\alpha$, $\alpha\in\bR$, defined by
\begin{align*}
\dot H^\alpha&:=D(\Lambda^{\alpha/2})\\
&:=\Big\{v=\sum_{k=1}^\infty v_k\varphi_k:(v_k)_{k\in\bN}\subset\bR,\;|v|_{\alpha}:=\nnrm{\Lambda^{\alpha/2}v}{L^2(\dom)}=\Big(\sum_{k=1}^\infty \lambda_k^\alpha v_k^2\Big)^{1/2}<\infty\Big\},
\end{align*}
where $(\varphi_k)_{k\in\bN}\subset D(\Lambda)$ is an orthonormal basis of $L^2(\dom)$ consisting of eigenfunctions of $\Lambda $ and $(\lambda_k)_{k\in\bN}\subset(0,\infty)$ is the corresponding sequence of eigenvalues; compare \cite[Chapters 3 and 19]{Tho06}. They are Hilbert spaces and one has the identities $\dot H^0=H=L^2(\dom)$, $\dot H^1=H^1_0(\dom)$ and $\dot H^2=D(\Lambda)=H^2(\dom)\cap H^1_0(\dom)$, where the natural norms of the respective spaces are equivalent. The latter equality is a consequence of the elliptic regularity estimate
\begin{equation}\label{eq:ellreg}
\|v\|_{H^2}\le C\|v\|_{\dot H^2},\quad v\in \dot{H}^2,
\end{equation}
for bounded convex domains, see \cite[Corollary 1]{Fromm93}. Without the convexity assumption one can still define the $\dot H^\alpha$ spaces as above but one does not obtain a characterization of $D(\Lambda)=\dot{H}^2$ in terms of classical Sobolev spaces.
For negative $\alpha$, the elements of $\dot H^\alpha$ are formal sums and we identify them with elements of $L^2(\dom)$ if $\sum_{k=1}^\infty v_k^2<\infty$, so that $\dot H^\alpha$ is the closure of $L^2(\dom)$ w.r.t.\ the $|\cdot|_\alpha$-norm.
\begin{remark}\label{rem:interpolation}
The spaces $\dot H^\alpha$, $\alpha\in\bR$, can be obtained by both real and complex interpolation: For $\alpha=(1-\theta)\alpha_0+\theta\alpha_2$, $\theta\in(0,1)$, one has $\dot H^\alpha=(\dot H^{\alpha_0},\dot H^{\alpha_1})_{\theta,2}=[\dot H^{\alpha_0},\dot H^{\alpha_1}]_\theta$ with equivalent norms, where $(\cdot,\cdot)_{\theta,2}$ and $[\cdot,\cdot]_\theta$ denotes real interpolation with summability parameter $q=2$ and complex interpolation, respectively. This follows, e.g., from \cite[Theorem 1.18.5]{Tri78} and the fact that the spaces $\dot H^\alpha$, $\alpha\in\bR$, are isometrically isomorphic to weighted $\ell^2$-spaces. We will frequently use the corresponding interpolation inequalities in this and the next section.
\end{remark}

For the spatial discretization of Eq.~\eqref{eq:SHE}, we consider a family of finite-dimensional spaces $(S_h)_{h>0}\subset H^1_0(\dom)$.
 Unless otherwise stated, we endow the spaces $S_h$ with the inner product $\langle\cdot,\cdot\rangle_H$ and the norm $\nnrm{\cdot}H$.  By $P_h:H\to S_h$ and $\Pi_h:\dot H^1\to S_h$ we denote the orthogonal projections with respect to the inner products in $H$ and $\dot H^1$, respectively. The discrete Laplacian $\Lambda_h:S_h\to S_h$ is defined by
\begin{equation}\label{eq:discreteLaplace}
\langle\Lambda_h v,w\rangle_{L^2(\dom)}=\langle\nabla v,\nabla w\rangle_{L^2(\dom;\bR^2)},\quad v,w\in S_h.
\end{equation}
Our assumption on the spatial approximation is formulated via the following estimate on the Ritz projection $\Pi_h$,
\begin{equation}\label{eq:basicFEMestimate1}
\nnrm{\Pi_hv-v}{L^2(\dom)}\leq C h^\beta|v|_\beta,\quad v\in\dot H^\beta,\;1\leq\beta\leq2.
\end{equation}
This holds for example, if $S_h$ is consisting of piecewise linear functions with respect to a family of triangulations of $\dom$. The parameter $h$ corresponds to the maximal mesh size of the triangulation, see, e.g., \cite[Lemma~1.1]{Tho06} or \cite[Section~5.4]{LarTho03}.

The time discretization of Eq.~\eqref{eq:SHE} on a finite interval $[0,T]$ is done via the implicit Euler scheme with time step $\dt=T/N$, $N\in\bN$, and grid points $t_n=n\dt$, $n=0,\ldots N$. For $h>0$ and $N\in\bN$, the discretization $(X^n_{h,\dt})_{n=1,\ldots,N}$ of $(X(t))_{t\in[0,T]}$ in space and time is given as the solution to
\begin{equation}\label{eq:schemeSHE}
X^n_{h,\dt}-X^{n-1}_{h,\dt}+\dt\Lambda_hX^n_{h,\dt}=P_h(L(t_n)-L(t_{n-1})),\quad n=1,\ldots,N;\quad X^0_{h,\dt}=P_hX_0.
\end{equation}

\begin{remark}[strong error] \label{rem:strongErrorSHE}
If the covariance operator $Q\in\bo(H)$ of $L$ is such that
\begin{equation}\label{eq:SHEassQ}
\nnrm{\Lambda^{\frac{\beta-1}2}Q^{\frac12}}{\hs(H)}<\infty
\end{equation}
for some $\beta\geq0$, then the solution $X(t)$ takes values in $\dot H^\beta$ for all $t>0$. For the Gaussian case, i.e., the case where $L$ in \eqref{eq:SHE} is a $Q$-Wiener process, it has been shown in \cite[Theorem~1.2]{Yan05} that, if \eqref{eq:SHEassQ} holds and $X_0\in L^2(\Omega,\cF_0,\bP;\dot H^\beta)$ for some $\beta\in(0,1]$, then the scheme \eqref{eq:schemeSHE} has strong convergence of order $\beta$ in space and $\beta/2$ in time:
\[\nnrm{X^n_{h,\dt}-X(t_n)}{L^2(\Omega;H)}\leq C(h^\beta+(\dt)^{\frac\beta2}),\quad n=0,\ldots,N.\]
Unlike weak error estimates, strong $L^2$-error estimates are the same in the Gaussian case and in our setting, since the only stochastic tool that is needed is It\^{o}'s isometry~\eqref{eq:ItoIsom} which looks the same
if the driving noise is a L\'{e}vy process which is an $L^2$-martingale.
Thus the strong error result in \cite[Theorem~1.2]{Yan05} carries over one-to-one to our setting.
\end{remark}

\begin{remark}
The $S_h$-valued random variables $P_h(L(t_n)-L(t_{n-1}))$ in \eqref{eq:schemeSHE} can be defined in two ways. On the one hand, we may set
\[P_h(L(t_n)-L(t_{n-1})):=L^2(\Omega;S_h)\text{-}\lim_{K\to\infty}\sum_{k=1}^K(L_k(t_n)-L_{k}(t_{n-1}))P_he_k,\]
with an orthonormal basis $(e_k)_{k\in\bN}$ of $U_0$ and real-valued uncorrelated L\'{e}vy processes $L_k=(L_k(t))_{t\geq0}$, $k\in\bN$, as in Remark~\ref{rem:approxL}. The limit exists since, by the finite-dimensionality of $S_h$, one has $P_h\in\hs(H,S_h)=\hs(U,S_h)\subset\hs(U_0,S_h)$.
On the other hand, we can extend the orthogonal projection $P_h:H\to S_h$ to a generalized $L^2$-projection $P_h:\dot H^{-1}\to S_h$ defined by
\[\langle P_hv,w\rangle_{H}=\langle v,w\rangle_{\dot H^{-1}\times\dot H^1},\quad v\in \dot H^{-1},\;w\in S_h.\] Then, the assumption $\nnrm{\Lambda^{-1/2}Q^{1/2}}{\hs(H)}<\infty$ implies that we can take $U_1:=D(\Lambda^{-1/2})=\dot H^{-1}$ as the state space of $L$, so that the expression $P_h(L(t_n)-L(t_{n-1}))$ makes sense $\omega$-wise. Obviously, both definitions are compatible.
In practice, one has to find a suitable way to sample (an approximation of) the discretized noise increment $P_h(L(t_n)-L(t_{n-1}))$. We do not treat this problem in the present paper but refer to \cite{BarLan12a, DunHauPro12} and \cite[Remark~4]{LinSch13} for related considerations.
\end{remark}

  With $R(\lambda):=1/(1+\lambda)$ and $E_{h,\dt}:=R(\dt\Lambda_h):=(I+\dt\Lambda_h)^{-1}$ as well as $E_{h,\dt}^n:=R^n(\dt\Lambda_h):=((I+\dt\Lambda_h)^{-1})^n$, the scheme \eqref{eq:schemeSHE} can be rewritten as
\begin{equation}\label{eq:schemeSHE2}
X^n_{h,\dt}=E_{h,\dt}^nP_hX_0+\sum_{j=1}^n E^{n-j+1}_{h,\dt}P_h(L(t_j)-L(t_{j-1})),\quad n=0,\ldots,N.
\end{equation}
For $t\in[0,T]$, let $\tilde E(t)=\tilde E_{h,\dt}(t)\in\bo(H)$ be defined by
\begin{equation}\label{eq:EtildeSHE}
\tilde E(t)=\tilde E_{h,\dt}(t):=\one_{\{0\}}(t)P_h+\sum_{n=1}^N\one_{(t_{n-1},t_n]}(t)E_{h,\dt}^nP_h
\end{equation}
and set
\begin{equation}\label{eq:XtildeSHE}
\tilde X(t)=\tilde X_{h,\dt}(t):=\tilde E_{h,\dt}(t)X_0+\int_0^t\tilde E_{h,\dt}(t-s)\,\dl L(s).
\end{equation}
Then $X^n_{h,\dt}=\tilde X_{h,\dt}(t_n)$ $\bP$-almost surely. This follows from the construction of the stochastic integral, using an approximation argument and It\^{o}'s isometry \eqref{eq:ItoIsom}.

The following deterministic estimates will be used in the proof of our weak error result stated in Theorem~\ref{thm:SHE} below.
\begin{lemma}
The operators $E(t)$ and $\tilde E(t)=\tilde E_{h,\dt}(t)$ defined in \eqref{eq:E(t)SHE} and \eqref{eq:EtildeSHE} satisfy the error estimates
\begin{align}
\nnrm{\tilde E(s)-E(s)}{\bo(H)}&\leq C(h^{2}+\dt)s^{-1},\label{eq:detEstSHE1}\\
\nnrm{\Lambda^\alpha E(s)}{\bo(H)}+\nnrm{\Lambda^\alpha \tilde E(s)}{\bo(H)}&\leq C s^{-\alpha},\quad 0\leq\alpha\leq1/2,\label{eq:detEstSHE2}
\end{align}
$s\in(0,T]$, where $C>0$ does not depend on $h$, $\dt$ and $s$.
\end{lemma}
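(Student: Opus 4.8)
The two bounds are the standard smoothing and nonsmooth-data error estimates for the backward Euler finite element scheme; the plan is to obtain the smoothing bound \eqref{eq:detEstSHE2} by spectral calculus and the error bound \eqref{eq:detEstSHE1} by peeling off the time-interpolation and reducing the rest to the classical fully discrete estimate.

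For \eqref{eq:detEstSHE2}, the bound on $E(s)=e^{-s\Lambda}$ is immediate from the spectral theorem, since $\nnrm{\Lambda^\alpha E(s)}{\bo(H)}=\sup_{\lambda>0}\lambda^\alpha e^{-s\lambda}=s^{-\alpha}\sup_{\mu>0}\mu^\alpha e^{-\mu}\leq Cs^{-\alpha}$ for every $\alpha\geq0$. For $\tilde E(s)$, fix $s\in(t_{n-1},t_n]$, so that $\tilde E(s)=E_{h,\dt}^nP_h$ maps into $S_h\subset\dot H^1$. I would first record two discrete spectral facts, read off from the eigenvalues $\mu_j>0$ of $\Lambda_h$ on $S_h$ by setting $x=\dt\mu_j$: the stability bound $\nnrm{E_{h,\dt}^n}{\bo(S_h)}=\sup_j(1+\dt\mu_j)^{-n}\leq1$, and, using $(1+x)^n\geq nx$, the discrete smoothing estimate $\nnrm{\Lambda_h^{1/2}E_{h,\dt}^n}{\bo(S_h)}=\sup_j\mu_j^{1/2}(1+\dt\mu_j)^{-n}\leq t_n^{-1/2}$. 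The point that makes $\Lambda^\alpha$ (rather than $\Lambda_h^\alpha$) manageable on $S_h$, and which forces $\alpha\leq1/2$, is the identity $\nnrm{\Lambda^{1/2}w}H=\nnrm{\nabla w}{L^2(\dom)}=\nnrm{\Lambda_h^{1/2}w}H$ for $w\in S_h$, immediate from \eqref{eq:discreteLaplace}. Combining this with the interpolation inequality $\nnrm{\Lambda^\alpha w}H\leq\nnrm{w}H^{1-2\alpha}\nnrm{\Lambda^{1/2}w}H^{2\alpha}$ (a H\"older estimate on the eigenexpansion, cf.\ Remark~\ref{rem:interpolation}) applied to $w=E_{h,\dt}^nP_hv$ gives $\nnrm{\Lambda^\alpha\tilde E(s)v}H\leq\nnrm{v}H^{1-2\alpha}(t_n^{-1/2}\nnrm{v}H)^{2\alpha}=t_n^{-\alpha}\nnrm{v}H\leq s^{-\alpha}\nnrm{v}H$, where the last step uses $s\leq t_n$.

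For \eqref{eq:detEstSHE1}, again fix $s\in(t_{n-1},t_n]$ and split $\tilde E(s)-E(s)=\big(E_{h,\dt}^nP_h-E(t_n)\big)+\big(E(t_n)-E(s)\big)$. The time-interpolation term is handled by spectral calculus: writing $\tau:=t_n-s\in[0,\dt)$, one has $E(t_n)-E(s)=(e^{-\tau\Lambda}-I)E(s)$, and using $1-e^{-\tau\lambda}\leq\tau\lambda$ together with the spectral fact $\nnrm{\Lambda E(s)}{\bo(H)}\leq Cs^{-1}$ yields $\nnrm{E(t_n)-E(s)}{\bo(H)}\leq C\tau s^{-1}\leq C\dt\,s^{-1}$. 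For the grid-point term I would invoke the standard nonsmooth-data estimate $\nnrm{E_{h,\dt}^nP_h-E(t_n)}{\bo(H)}\leq C(h^2+\dt)t_n^{-1}$; since $t_n^{-1}\leq s^{-1}$, adding the two contributions gives \eqref{eq:detEstSHE1}. To justify the grid-point estimate rather than merely quote it, I would factor it through the semidiscrete solution operator $e^{-t_n\Lambda_h}P_h$: the temporal part $\nnrm{(R(\dt\Lambda_h)^n-e^{-t_n\Lambda_h})P_h}{\bo(H)}$ is once more a spectral estimate on $\Lambda_h$, reducing to the elementary inequality $\sup_{x\geq0}\big|(1+x)^{-n}-e^{-nx}\big|\leq C/n$ (so that this part is $\leq C\dt\,t_n^{-1}$), while the spatial part is the semidiscrete parabolic estimate $\nnrm{e^{-t_n\Lambda_h}P_h-e^{-t_n\Lambda}}{\bo(H)}\leq Ch^2t_n^{-1}$.

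The main obstacle is precisely this last, spatial, ingredient: the $O(h^2t^{-1})$ bound for $e^{-t\Lambda_h}P_h-e^{-t\Lambda}$ with merely $L^2$ data. It is the only genuinely nontrivial piece, resting on the second-order Ritz projection bound \eqref{eq:basicFEMestimate1} (hence on convexity of $\dom$ through \eqref{eq:ellreg}), an error representation in terms of the Ritz projection, and a parabolic duality/energy argument combined with the discrete smoothing estimates. For this I would rely on the standard finite element theory in \cite{Tho06}; everything else above is elementary spectral calculus and interpolation on the Hilbert scale $\dot H^\alpha$.
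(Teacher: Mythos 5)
Your proposal is correct and follows essentially the same route as the paper's proof: the same splitting of $\tilde E(s)-E(s)$ into the grid-point error, handled by quoting the classical nonsmooth-data estimate $\nnrm{E_{h,\dt}^nP_h-E(t_n)}{\bo(H)}\leq C(h^2+\dt)\,t_n^{-1}$ from \cite[Theorem~7.7]{Tho06} (valid here because it rests only on \eqref{eq:basicFEMestimate1}), plus the time-interpolation error, and for \eqref{eq:detEstSHE2} the same key identity $\nnrm{\Lambda^{1/2}v_h}{H}=\nnrm{\Lambda_h^{1/2}v_h}{H}$ for $v_h\in S_h$ combined with discrete smoothing and interpolation. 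The only differences are cosmetic: where the paper cites Pazy's analytic-semigroup estimates and \cite[Lemma~7.3]{Tho06}, you derive the same bounds by direct spectral calculus (legitimate, since $\Lambda$ and $\Lambda_h$ are self-adjoint and positive definite), and you additionally sketch the proof of the quoted fully discrete estimate.
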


\begin{proof}
Estimate \eqref{eq:detEstSHE1} follows from
\begin{equation}\label{eq:hest}
\nnrm{E_{h,\dt}^nP_h-E(t_n)}{\bo(H)}\leq C(h^2+\dt)t_n^{-1},
\end{equation}
see, for example, \cite[Theorem~7.7]{Tho06}. We note here that while the latter result is proved under the assumption that $\dom$ has smooth boundary, the proof relies on the availability of \eqref{eq:basicFEMestimate1}, which is our basic assumption, and hence \eqref{eq:hest} is valid in our setting.
For $s\in(t_{n-1},t_n]$ we have
\begin{align*}
\nnrm{(E(t_n)-E(s))v}H&=\nnrm{\Lambda E(s)(E(t_n-s)-\id{H})\Lambda^{-1}v}H\\
&\leq\nnrm{\Lambda E(s)}{\bo(H)}\nnrm{(E(t_n-s)-\id{H})\Lambda^{-1}v}H\\
&\leq Cs^{-1}\dt\nnrm{v}H,
\end{align*}
where we used Theorem~6.13(c),(d) on analytic semigroups in \cite[Chapter 2]{Paz83}.
Estimate~\eqref{eq:detEstSHE2} is due to Theorem~6.13(c) in \cite[Chapter 2]{Paz83}, Lemma~7.3 in  \cite{Tho06}, interpolation, and the fact that $\nrm{A^\alpha v_h}\leq\nrm{A^\alpha_hv_h}$ for $v_h\in S_h$, $0\leq\alpha\leq 1/2$. The latter follows from the basic identity $\nrm{A^{1/2}v_h}=\nrm{A_h^{1/2}v_h}$ and interpolation.
\end{proof}

Here is our result for the weak error of the discretization of the stochastic heat equation.
\begin{theorem}\label{thm:SHE}
Assume that $X_0\in L^2(\Omega,\cF_0,\bP;H)$ and $\nnrm{\Lambda^{(\beta-1)/2}Q^{1/2}}{\hs(H)}<\infty$ for some $\beta\in(0,1]$. Let $(X(t))_{t\geq0}$ be the weak solution \eqref{eq:X} to Eq.~\eqref{eq:mainEq} and let $(X_{h,\dt}^n)_{n=0,\ldots,N}$ be defined by the scheme \eqref{eq:schemeSHE}. Given $g\in C^2(H,\bR)$ with $\sup_{x\in H}\nnrm{g''(x)}{\bo(H)}<\infty$, there exists a constant $C=C(g,T)>0$
that does not depend on $h$ and $\dt$, such that
\[\big|\bE\big(g(X^N_{h,\dt})-g(X(T))\big)\big|\leq C(h^{2\beta}+(\dt)^\beta)|\log(h^2+\dt)|\]
for $h^{2}+ \dt\leq 1/e$.
\end{theorem}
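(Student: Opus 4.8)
The plan is to specialize the error representation \eqref{eq:errorRep4} of Corollary~\ref{cor:errorRep} to the present setting, with $H=U=L^2(\dom)$, $B=\id{L^2(\dom)}$, $E(t)=e^{-t\Lambda}$, $\tilde E=\tilde E_{h,\dt}$ from \eqref{eq:EtildeSHE}, and $g$ in the role of $G$; recall that $X(T)=Y(T)$, $\tilde X(T)=\tilde Y(T)$ and $X^N_{h,\dt}=\tilde X_{h,\dt}(T)$. This writes $e(T)=\bE(g(X^N_{h,\dt})-g(X(T)))$ as the sum of an initial-data term $\bE\{u(0,\tilde E(T)X_0)-u(0,E(T)X_0)\}$ and the double integral of $\Psi_1+\Psi_2$. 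The structural facts I would exploit are that, by \eqref{eq:u_xu_xx}, $u_{xx}(t,x)=\bE\,g''(Z(T,t,x))$, so that $M:=\sup_{(t,x)}\nnrm{u_{xx}(t,x)}{\bo(H)}\le\sup_x\nnrm{g''(x)}{\bo(H)}<\infty$, while $u_x(t,x)=\bE\,g'(Z(T,t,x))$ has the linear growth inherited from \eqref{eq:growthGG'}. Writing $F(s)=\tilde E(s)-E(s)$, the definitions of $\Psi_1,\Psi_2$ give $\int_0^1|\Psi_1(t,\theta,y)|\,\dl\theta\le\tfrac12 M\nnrm{F(T-t)y}H^2$ and $\int_0^1|\Psi_2(t,\theta,y)|\,\dl\theta\le M\nnrm{E(T-t)y}H\nnrm{F(T-t)y}H$.

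Next I would integrate in $y$ and $t$. By Lemma~\ref{lem:integrandIsomorphism} together with the identity $\nnrm{\Phi}{\hs(U_0,H)}=\nnrm{\Phi Q^{1/2}}{\hs(H)}$, one has $\int_{U_1}\nnrm{\Phi y}H^2\,\nu(\dl y)=\nnrm{\Phi Q^{1/2}}{\hs(H)}^2$ for $\Phi\in\hs(U_0,H)$. Applying this to $\Phi=F(T-t)$, using Cauchy--Schwarz on the $\Psi_2$-term before applying it to $\Phi=E(T-t)$, and substituting $s=T-t$, reduces the two contributions to constant multiples of
\[\int_0^T\nnrm{F(s)Q^{1/2}}{\hs(H)}^2\,\dl s\quad\text{and}\quad\int_0^T\nnrm{F(s)Q^{1/2}}{\hs(H)}\,\nnrm{E(s)Q^{1/2}}{\hs(H)}\,\dl s.\]

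The core is then a set of deterministic estimates. Put $C_Q:=\nnrm{\Lambda^{(\beta-1)/2}Q^{1/2}}{\hs(H)}$ and factor $Q^{1/2}=\Lambda^{(1-\beta)/2}\cdot\Lambda^{(\beta-1)/2}Q^{1/2}$ to obtain $\nnrm{\Phi Q^{1/2}}{\hs(H)}\le\nnrm{\Phi\Lambda^{(1-\beta)/2}}{\bo(H)}\,C_Q$. Since $E(s)$ and $\tilde E(s)$ are self-adjoint on $H$ (the latter because $P_h$ and $\Lambda_h$ are self-adjoint for $\langle\cdot,\cdot\rangle_H$), so is $F(s)$, whence $\nnrm{F(s)\Lambda^{(1-\beta)/2}}{\bo(H)}=\nnrm{\Lambda^{(1-\beta)/2}F(s)}{\bo(H)}$ and likewise for $E(s)$, so that \eqref{eq:detEstSHE2} applies directly with $\Lambda$ on the left. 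This yields $\nnrm{E(s)Q^{1/2}}{\hs(H)}\le C_Q\,Cs^{-(1-\beta)/2}$ and the crude bound $\nnrm{F(s)Q^{1/2}}{\hs(H)}\le C_Q\,Cs^{-(1-\beta)/2}$. The sharp bound comes from interpolating the codomain in the scale $\dot H^\alpha$ (Remark~\ref{rem:interpolation}) between $\nnrm{F(s)}{\bo(H)}\le C(h^2+\dt)s^{-1}$ from \eqref{eq:detEstSHE1} and $\nnrm{\Lambda^{1/2}F(s)}{\bo(H)}\le Cs^{-1/2}$ from \eqref{eq:detEstSHE2}, giving $\nnrm{\Lambda^{(1-\beta)/2}F(s)}{\bo(H)}\le C(h^2+\dt)^{\beta}s^{-(1+\beta)/2}$ and hence $\nnrm{F(s)Q^{1/2}}{\hs(H)}\le C_Q\,C(h^2+\dt)^{\beta}s^{-(1+\beta)/2}$.

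Finally, writing $\delta:=h^2+\dt$ and combining the two bounds as $\nnrm{F(s)Q^{1/2}}{\hs(H)}\le C_Q\,C\min(s^{-(1-\beta)/2},\delta^{\beta}s^{-(1+\beta)/2})$, I would split both $s$-integrals at $s=\delta$. For the $\Psi_1$-integral the pieces $\int_0^\delta s^{-(1-\beta)}\,\dl s$ and $\delta^{2\beta}\int_\delta^T s^{-(1+\beta)}\,\dl s$ are each $O(\delta^\beta)$, so this term is $O(\delta^\beta)$. For the $\Psi_2$-integral the integrand is $\lesssim s^{-(1-\beta)}$ on $(0,\delta]$, giving $O(\delta^\beta)$, and $\lesssim\delta^\beta s^{-1}$ on $(\delta,T]$, giving the borderline $\delta^\beta\log(T/\delta)\lesssim\delta^\beta|\log\delta|$ once $\delta\le1/e$. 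The initial-data term is bounded using $|u(0,x)-u(0,y)|\le C(1+\nnrm xH+\nnrm yH)\nnrm{x-y}H$ (from the linear growth of $u_x$), Cauchy--Schwarz over $\Omega$, $X_0\in L^2(\Omega;H)$, and \eqref{eq:detEstSHE1} at $s=T$, contributing $O(h^2+\dt)=O(\delta^\beta)$. Summing the three contributions and using $\delta^\beta\le h^{2\beta}+(\dt)^\beta$ gives the claim. I expect the main obstacle to be exactly this last integration: the sharp estimate forces the $\Psi_2$-integrand to scale like $\delta^\beta s^{-1}$ away from the origin, whose integral diverges logarithmically, and it is the cut-off at $s=\delta$ that simultaneously tames the singularity and produces the unavoidable factor $|\log(h^2+\dt)|$; deriving the interpolated bound $\nnrm{\Lambda^{(1-\beta)/2}F(s)}{\bo(H)}\le C(h^2+\dt)^\beta s^{-(1+\beta)/2}$ from \eqref{eq:detEstSHE1}--\eqref{eq:detEstSHE2} is the other delicate point.
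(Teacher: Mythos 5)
Your proposal is correct and follows essentially the same route as the paper's proof: the error representation of Corollary~\ref{cor:errorRep} with $G=g$, the self-adjointness and interpolation estimates \eqref{eq:detEstSHE3}--\eqref{eq:detEstSHE5} for $E(s)\Lambda^{(1-\beta)/2}$ and $\Lambda^{(1-\beta)/2}F(s)$, and the splitting of the time integral at $s=h^2+\dt$, which is precisely where the factor $|\log(h^2+\dt)|$ arises. The only (immaterial) difference is that for the $\Psi_1$-term the paper simply invokes the known strong-error bound (Remark~\ref{rem:strongErrorSHE}, via It\^o's isometry) instead of re-deriving it from the same deterministic estimates as you do.
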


\begin{proof}
We are in the setting of Section~\ref{sec:Setting and preliminaries} with $H=U=L^2(\dom)$, $B=\id H$, and $(E(t))_{t\geq0}$, $(\tilde E(t))_{t\in[0,T]}=(\tilde E_{h,\dt}(t))_{t\in[0,T]}$, $(\tilde X(t))_{t\in[0,T]}=(\tilde X_{h,\dt}(t))_{t\in[0,T]}$ being given by \eqref{eq:E(t)SHE}, \eqref{eq:EtildeSHE}, \eqref{eq:XtildeSHE} respectively. In particular, Assumption~\ref{ass:abstractSetting} is fulfilled. Since $X_{h,\dt}^N=\tilde X(T)$, we can use Corollary~\ref{cor:errorRep} with $G:=g$ to estimate the weak error.
Let $F(t):=\tilde E(t)-E(t)$ be the deterministic error operator.

We begin with the first term on the right hand side of \eqref{eq:errorRep4} in Corollary~\ref{cor:errorRep}.
The stability estimate \eqref{eq:hstab} and the deterministic estimate \eqref{eq:detEstSHE1} yield, for $\max(h^{2},\dt)\leq 1$,
\begin{equation}\label{eq:proofSHE1}
\begin{aligned}
&\big|\bE\big\{u(0,\tilde E(T)X_0)-u(0,E(T)X_0)\big\}\big|\\
&= \big|\bE\big\{u(0,\tilde Y(0))-u(0,Y(0))\big\}\big|\\
&= \Big|\bE\int_0^1\big\langle u_x\big(0,Y(0)+\theta(\tilde Y(0)-Y(0))\big),\tilde Y(0)-Y(0)\big\rangle_{ H}\dl \theta\Big|\\
&= \Big|\bE\int_0^1\big\langle\bE\big(g'(Z(T,0,x))\big)\big|_{x=Y(0)+\theta(\tilde Y(0)-Y(0))},\tilde Y(0)-Y(0)\big\rangle_{H}\dl\theta\Big|\\
&\leq \int_0^1\big\|g'\big(Z\big(T,0,Y(0)+\theta(\tilde Y(0)-Y(0))\big)\big)\big\|_{L^2(\Omega, H)}\dl\theta\, \|(\tilde E(T)-E(T))X_0\|_{L^2(\Omega, H)}\\
&\leq C\big(1+\int_0^1\big\|Z\big(T,0,Y(0)+\theta(\tilde Y(0)-Y(0))\big)\big\|_{L^2(\Omega, H)}\dl\theta\big)\,(h^2+\dt)\,T^{-1}\,\nnrm{X_0}{L^2(\Omega; H)}\\
&\leq C\big(1+\nnrm{\Lambda^{-1/2}Q^{1/2}}{\hs(\dot H^0)}+\nnrm{X_0}{L^2(\Omega; H)}\big)\,\nnrm{X_0}{L^2(\Omega; H)}\,T^{-1}\,(h^{2\beta}+(\dt)^\beta).
%
\end{aligned}
\end{equation}

Next, consider the second term on the right hand side of \eqref{eq:errorRep4}. We estimate the integrals of the functions $\Psi_1$ and $\Psi_2$ separately. Using Lemma~\ref{lem:integrandIsomorphism} and Remark~\ref{not:Phix}, we obtain
\begin{equation}\label{eq:proofSHE2}
\begin{aligned}
\big|\bE\int_0^T&\int_{U_1}\int_0^1\Psi_1(t,\theta,y)\,\dl\theta\,\nu(\dl y)\,\dl t\big|\\
&\leq \sup_{x\in H}\nnrm{g''(x)}{\bo(H)}\int_0^T\int_{U_1}\nnrm{F(T-t)y}H^2\,\nu(\dl y)\,\dl t\\
&=\sup_{x\in H}\nnrm{g''(x)}{\bo(H)}\int_0^T\nnrm{F(T-t)}{\hs(U_0,H)}^2\,\dl t\\
&\leq C \sup_{x\in H}\nnrm{g''(x)}{\bo(H)}(h^{2\beta}+(\dt)^\beta).
\end{aligned}
\end{equation}
The last step is due to the fact that, by It\^{o}'s isometry \eqref{eq:ItoIsom}, the integral in the penultimate line is the square of the strong error $\nnrm{X^N_{h,\dt}-X(T)}{L^2(\Omega;H)}$ for zero initial condition $X_0=0$, which can be estimated as in the Gaussian case \cite[Theorem~1.2]{Yan05}, compare Remark~\ref{rem:strongErrorSHE}. Further, by the Cauchy-Schwarz inequality, Lemma~\ref{lem:integrandIsomorphism}, and the fact that $U_0=Q^{1/2}(U)$,
\begin{equation}\label{eq:proofSHE3}
\begin{aligned}
\big|\bE&\int_0^T\int_{U_1}\int_0^1\Psi_2(t,\theta,y)\,\dl\theta\,\nu(\dl y)\,\dl t\big|\\
&\leq \sup_{x\in H}\nnrm{g''(x)}{\bo(H)}\int_0^T\int_{U_1}\nnrm{E(T-t)y}H\nnrm{F(T-t)y}H\,\nu(\dl y)\,\dl t\\
&\leq\sup_{x\in H}\nnrm{g''(x)}{\bo(H)}\int_0^T\nnrm{E(T-t)}{\hs(U_0,H)}\nnrm{F(T-t)}{\hs(U_0,H)}\,\dl t\\
&\leq\sup_{x\in H}\nnrm{g''(x)}{\bo(H)}\nnrm{\Lambda^{\frac{\beta-1}2}Q^{1/2}}{\hs(H)}^2\int_0^T\nnrm{E(t)\Lambda^{\frac{1-\beta}2}}{\bo(H)}\nnrm{F(t)\Lambda^{\frac{1-\beta}2}}{\bo(H)}\,\dl t
\end{aligned}
\end{equation}
By \eqref{eq:detEstSHE2} we have
\begin{equation}\label{eq:detEstSHE3}
\nnrm{E(t)\Lambda^{\frac{1-\beta}2}}{\bo(H)}=\nnrm{\Lambda^{\frac{1-\beta}2}E(t)}{\bo(H)}\leq Ct^{-\frac{1-\beta}2}
\end{equation}
and
\begin{equation}\label{eq:detEstSHE4}
\nnrm{\Lambda^\alpha F(t)}{\bo(H)}\leq Ct^{-\alpha},\quad 0\leq\alpha\leq1/2.
\end{equation}
Interpolation between \eqref{eq:detEstSHE1} and \eqref{eq:detEstSHE4} with $\alpha=1/2$ gives
\begin{equation}\label{eq:detEstSHE5}
\nnrm{\Lambda^{\frac{1-\beta}2}F(t)}{\bo(H)}\leq C\nnrm{F(t)}{\bo(H)}^\beta\nnrm{\Lambda^{\frac12}F(t)}{\bo(H)}^{1-\beta}\leq C(h^2+\dt)^\beta t^{-\frac{1+\beta}2}.
\end{equation}
Note that $\nnrm{F(t)\Lambda^\alpha}{\bo(H)}=\nnrm{\Lambda^\alpha F(t)}{\bo(H)}$ due to the self adjointness of $\tilde E(t)$, $E(t)$ and $\Lambda^\alpha$. Altogether, using \eqref{eq:detEstSHE3}, \eqref{eq:detEstSHE4} and \eqref{eq:detEstSHE5}, the integral in the last line of \eqref{eq:proofSHE3} can be estimated by
\begin{equation}\label{eq:proofSHE4}
\begin{aligned}
\int_0^T&\nnrm{E(t)\Lambda^{\frac{1-\beta}2}}{\bo(H)}\nnrm{F(t)\Lambda^{\frac{1-\beta}2}}{\bo(H)}\,\dl t\\
&=\Big(\int_0^{h^2+\dt}+\int_{h^2+\dt}^T\Big)\nnrm{\Lambda^{\frac{1-\beta}2}E(t)}{\bo(H)}\nnrm{\Lambda^{\frac{1-\beta}2}F(t)}{\bo(H)}\,\dl t\\
&\leq C\int_0^{h^2+\dt}t^{-\frac{1-\beta}2}t^{-\frac{1-\beta}2}\,\dl t+C\int_{h^2+\dt}^Tt^{-\frac{1-\beta}2}(h^2+\dt)^\beta t^{-\frac{1+\beta}2}\,\dl t\\
&= C(h^2+\dt)^\beta(1+|\log(h^2+\dt)|)\\
&\leq C(h^{2\beta}+(\dt)^\beta)|\log(h^2+\dt)|.
\end{aligned}
\end{equation}
for $h^2+\dt\leq 1/e$, where $C>0$ depends on $T$.
The combination of \eqref{eq:proofSHE1}, \eqref{eq:proofSHE2}, \eqref{eq:proofSHE3} and \eqref{eq:proofSHE4} finishes the proof.
\end{proof}
\subsection{Stochastic Volterra integro-differential equations}\label{subsec:sve}
Here we consider a stochastic integro-differential equation of Volterra-type where the deterministic equation exhibits a parabolic character. The error analysis is basically analogous to the heat equation and therefore we skip some computational details.
\label{rem:volterra} The weak solution of the Volterra-type stochastic evolution equation, a simple model of viscoelastic materials in the presence of noise,
$$
\dl X(t) + \left ( \int_0^t \frac{1}{\Gamma(\rho-1)}(t-s)^{\rho-2} \Lambda X(s) \, \dl s \right ) \, \dl t = \dl L(t),~t\in (0,T]; ~ X(0) =X_0\in H,
$$
is also given by \eqref{eq:X}, where $(E(t))_{t\ge 0}$ is the solution operator of the linear, homogeneous deterministic problem (see, for example, \cite{BGK15,CDaPP,Sperlich} for Gaussian and fractional Brownian noise) and $\rho\in (1,2)$. Because of the special convolution kernel above, the equation can be also viewed as a fractional-in-time differential equation.
Using the same finite element approximation in space as for the heat equation and a convolution quadrature in time we consider the following recurrence (see \cite{KovPri14a,KovPri14b} for the Gaussian case),
\begin{equation} \label{eq:full_scheme volterra}
X^n_{h,\Delta t} - X^{n-1}_{h,\Delta t} + \Delta t \left ( \sum_{k=1}^{n} \omega_{n-k}\,  \Lambda_{h} X^k_{h,\Delta t} \right ) = P_h(L(t_n)-L(t_{n-1})), \quad n\geq 1,
\end{equation}
with $X^0_{h,\Delta t}=P_hX_0$ and convolution weights $(\omega_k)_{k\ge 0}$ chosen according to (see \cite{lubich88,lubich88II})
\begin{equation} \label{eq:weight}
\left ( \frac{1 - z}{\Delta t} \right )^{1-\rho} = \sum_{k\geq 0} \omega_k z^k, \quad |z|<1.
\end{equation}
The solution to \eqref{eq:full_scheme volterra} can again be written in the form \eqref{eq:schemeSHE2} with a suitable operator family $(E^n_{h,\Delta t})_{n\in \mathbb{N}}$, (see \cite{KovPri14a,KovPri14b} for the Gaussian case). Then, define $(\tilde{E}(t))_{t\in [0,T]}$ and $(\tilde{X}(t))_{t\in [0,T]}$ according to \eqref{eq:EtildeSHE} and \eqref{eq:XtildeSHE}, respectively. In contrast to the heat equation where \eqref{eq:detEstSHE1} and \eqref{eq:detEstSHE2} hold, one has the deterministic estimates (see \cite{Lubich_et_al1996} and \cite[Theorem 3.1]{KovPri14b})
\begin{align}
\nnrm{\tilde E(s)-E(s)}{\bo(H)}&\leq C(h^{2/\rho}+\dt)s^{-1},\label{eq:detEstSVE1}\\
\nnrm{\Lambda^\alpha E(s)}{\bo(H)}+\nnrm{\Lambda^\alpha \tilde E(s)}{\bo(H)}&\leq C s^{-\rho\alpha},\quad 0\leq\alpha\leq 1/{(2\rho)},\label{eq:detEstSVE2}
\end{align}
$s\in(0,T]$, where $C>0$ does not depend on $h$, $\dt$ and $s$. Note further, that if $\nnrm{\Lambda^{-1/(2\rho)}Q^{1/2}}{\hs( H)}<\infty$, then using It\^o's isometry, we get the stability estimate as in the Gaussian case, see \cite[page 2333]{KovPri14a},
$$
\nnrm{X(T)}{L^2(\Omega,H)}\le C (\nnrm{\Lambda^{-1/(2\rho)}Q^{1/2}}{\hs( H)}+\nnrm{X_0}{L^2(\Omega,H)}),
$$
and, in particular, Assumption \ref{ass:abstractSetting} (iv) holds. Furthermore, for $t\in[\epsilon, T]$ and $x\in H$, we have that
\begin{equation*}
\|E(t)x\|_{H}=\|\Lambda^{1/(2\rho)}E(t)\Lambda^{-1/(2\rho)}x\|_H\le \|\Lambda^{1/(2\rho)}E(t)\|_{\bo(H)}\|\,\|\Lambda^{-1/(2\rho)}x\|_{H}\le \|\Phi_{\epsilon}x\|_H.
\end{equation*}
Therefore, Assumption \ref{ass:abstractSetting} (v) holds with $\Phi_{\epsilon}:=\sup_{t\in [\epsilon, T]}\|\Lambda^{1/(2\rho)}E(t)\|_{\mathcal{L}(H)}\Lambda^{-1/(2\rho)}$, where the supremum is finite because of \eqref{eq:detEstSVE2}.
Hence, via an analogous calculation as for the heat equation above, setting $H=U=L^2(\dom)$, $B=\id H$, assuming $\|\Lambda^{\frac{\beta-1/\rho}{2}}Q^\frac12\|^2_{\hs(H)}<\infty$, $\beta\in (0,1/\rho)$, $X_0\in L^2(\Omega,\cF_0,\bP;H)$, and using \eqref{eq:detEstSVE1} and \eqref{eq:detEstSVE2}, one arrives at the weak error estimate
\[\big|\bE\big(g(X^N_{h,\dt})-g(X(T))\big)\big|\leq C(h^{2\beta}+(\dt)^{\rho\beta})|\log(h^{2/\rho}+\dt)|, \]
for $h^{2/\rho}+ \dt\leq 1/e$. This is essentially twice the strong rate where the latter is the same as in the Gaussian case \cite{KovPri14a}, as the strong error analysis carries over to our setting, cf.~Remark \ref{rem:strongErrorSHE}.

\section{Application to the wave equation}

\label{sec:Wave}

Here, we apply the general error representation from Section~\ref{sec:An error representation formula} to a discretization of the stochastic wave equation~\eqref{eq:SWE}.

Let $\dom\subset\bR^d$ be a convex bounded domain and let the spaces $\dot H^\alpha$, $\alpha\in\bR$, be as in Section~\ref{sec:Heat}. We use the product spaces
\begin{equation*}
\cH^\alpha:=\dot H^\alpha\times\dot H^{\alpha-1},\quad\alpha\in\bR,
\end{equation*}
with inner product $\langle v,w\rangle_{\cH^\alpha}:=\langle v_1,w_1\rangle_{\alpha}+\langle v_2,w_2\rangle_{\alpha-1}$, $v=(v_1,v_2)^\top$, $w=(w_1,w_2)^\top$ and norm $\nnrm{v}{\cH^\alpha}=(|v_1|_{\alpha}^2+|v_2|_{\alpha-1}^2)^{1/2}$, where $\langle\cdot,\cdot\rangle_\alpha$ and  $\langle\cdot,\cdot\rangle_{\alpha-1}$ are the inner products in $\dot H^\alpha$ and $\dot H^{\alpha-1}$ corresponding to the norms $|\cdot|_\alpha$ and $|\cdot|_{\alpha-1}$ introduced in Section~\ref{sec:Heat}. We set
\[ H:=\cH^0=\dot H^0\times\dot H^{-1}=L^2(\dom)\times H^{-1}(\dom),\quad  U:=\dot H^0=L^2(\dom)\] and define operators $A:D(A)\subset  H\to H$ and $B\in\bo( U, H)$ by setting $D(A):= \cH^1$ and
\begin{equation*}
A:=\begin{pmatrix}0 & -I \\ \Lambda & 0\end{pmatrix},\quad B:=\begin{pmatrix}0\\I\end{pmatrix},
\end{equation*}
where the Laplace operator
$\Lambda$ from Section~\ref{sec:Heat} is now considered as an operator from $\dot H^1$ to $\dot H^{-1}$. It is well-known that $-A$ generates a strongly continuous semigroup $(E(t))_{t\geq0}\subset\bo( H)$ given by
\begin{equation}\label{eq:E(t)SWE}
E(t)=\begin{pmatrix}C(t) & \Lambda^{-1/2}S(t) \\ -\Lambda^{1/2}S(t) & C(t) \end{pmatrix},
\end{equation}
where $C(t):=\cos(t\Lambda^{1/2})$ and $S(t):=\sin(t\Lambda^{1/2})$ are the cosine and sine operators; compare \cite[Example~B.1]{PesZab07}, \cite[Section~A.5.4]{DPZ92} and \cite[Section~3.14]{ABHN11}.

With these definitions the abstract equation \eqref{eq:mainEq} becomes the stochastic wave equation~\eqref{eq:SWE} with $ H$-valued solution $(X(t))_{t\geq0}=((X_1(t),X_2(t))^\top)_{t\geq0}$. As in the Gaussian case, cf.~\cite[Lemma~4.1]{KovLarLin13}, one sees that the condition $\nnrm{\Lambda^{-1/2}Q^{1/2}}{\hs(\dot H^0)}<\infty$ implies $\eqref{eq:assEB1}$ and hence the existence of a unique weak solution $X=(X(t))_{t\geq 0}$, given that the initial condition $X_0=(X_{0,1},X_{0,2})^\top$ is $ H$-valued and $\cF_0$-measurable. Furthermore,
\begin{equation}\label{eq:wstab}
\nnrm{X(T)}{L^2(\Omega,H)}\le T\nnrm{\Lambda^{-1/2}Q^{1/2}}{\hs(\dot H^0)}+\nnrm{X_0}{L^2(\Omega,H)}.
\end{equation}

The discretization of Eq.~\eqref{eq:SWE} is done via finite elements
in space and an $I$-stable rational single step scheme of order in time. (By `$I$-stable' we mean what is called `$I$-acceptable' in \cite{NorWan79}.) We use the finite element setting introduced in Section~\ref{sec:Heat}, the only difference being that we assume a possibly higher order approximation property of the Ritz projection $\Pi_h$ of the form
\begin{equation}\label{eq:basicFEMestimate2}
\nnrm{\Pi_hv-v}{L^2(\dom)}\leq C h^\beta|v|_\beta,\quad v\in\dot H^\beta,\;1\leq\beta\leq r,
\end{equation}
with $r\ge 2$. For $r=2$ this requires no further assumption on the domain $\dom$ (other than convexity) and $S_h$ can be chosen to be the space of continuous piecewise linear functions on a triangulation of $\dom$ with maximal mesh-size $h$, as in the case of the heat equation. For $r>2$ this firstly requires extra assumptions on the domain $\dom$, namely small enough interior angles in case of a polygon, or smooth enough boundary in case of a curved boundary. The reason is that to achieve $r>2$ one needs an elliptic regularity estimate $\|u\|_{H^r}\le C\|\Lambda u\|_{H^{r-2}}$ with $r>2$ instead of \eqref{eq:ellreg} corresponding to $r=2$. Furthermore, if the boundary is curved, then the triangulation is not exact and one has to be more precise while approximating near the boundary and hence one needs special elements. For example, \eqref{eq:basicFEMestimate2} holds for $r=4$ for bounded convex domains with smooth boundary and $S_h$ consisting of continuous piecewise cubic polynomials using special, so-called isoparametric elements, near the boundary, see \cite[Chapter 1]{Tho06}.
Although \eqref{eq:basicFEMestimate2} does not appear explicitly in the present paper, it is the key ingredient in the proof of the deterministic error estimate for the finite element approximation of the wave equation which we will use later on and hence we state it as our abstract assumption on the finite element spaces $S_h$.
Let the discretization $A_h:S_h\times S_h\to S_h\times S_h$ of the operator $A:D(A)\subset H\to H$ be defined by
\begin{equation*}
A_h:=\begin{pmatrix}0 & -I \\ \Lambda_h & 0\end{pmatrix},
\end{equation*}
where $\Lambda_h:S_h\to S_h$ is the discrete Laplacian introduced in \eqref{eq:discreteLaplace}.
Then $-A_h$ generates a strongly continuous semigroup $(E_h(t))_{t\geq0}\subset \bo(S_h\times S_h)$. As in Section~\ref{sec:Heat}, we consider for $N\in\bN$ a uniform grid $t_n=n\dt=n (T/N)$, $n=0,\ldots,N$, on a finite time interval $[0,T]$. We approximate the operators $E_h(t_n)\in\bo(S_h\times S_h)$ by
\[E_{h,\dt}^n:=(R(\dt A_h))^n,\]
where $R$ is a rational function that satisfies the approximation and stability properties
\begin{equation*}
\begin{aligned}
|R(iy)-e^{-iy}|&\leq C |y|^{p+1},\quad |y|\leq b,\\
|R(iy)|&\leq 1,\quad y\in\bR,
\end{aligned}
\end{equation*}
for some positive integer $p$ and some $b>0$; see \cite{BakBra79, BreTho80} for details. For instance, choosing $R(\lambda)=1/(1-\lambda)$ and  $R(\lambda)=(2-\lambda)/(2+\lambda)$ yields the backward Euler method ($p=1$) and the Crank-Nicolson method ($p=2$), respectively.

The numerical scheme for the stochastic wave equation \eqref{eq:mainEq} can now be formulated as follows:
For $h>0$ and $N\in\bN$, the discretization $(X^n_{h,\dt})_{n=0,\ldots,N}$ of $(X(t))_{t\in[0,T]}$ in space and time is given as the solution to
\begin{equation}\label{eq:schemeSWE1}
X_{h,\dt}^n=E_{h,\dt}\big(X_{h,\dt}^{n-1}+P_h B(L(t_n)-L(t_{n-1}))\big),\quad n=1,\ldots,N;\quad X_{h,\dt}^0=P_hX_0.
\end{equation}
By slight abuse of notation, we denote here and in the sequel by $P_h$ both the generalized $L^2$-projection from $\dot H^{-1}$ onto $S_h$ defined by $\langle P_hv,w\rangle_{L^2(\dom)}=\langle v,w\rangle_{\dot H^{-1}\times\dot H^1}$, $v\in \dot H^{-1}$, $w\in S_h$, and the corresponding projection from $ H=\dot H^{0}\times\dot H^{-1}$ onto $S_h\times S_h$ defined by the action of the former projection on the coordinates of elements in $\dot H^{0}\times\dot H^{-1}$. Moreover, $P^1: H\to\dot H^0$ is the projection of elements in $H=\dot H^0\times\dot H^{-1}$ on the first coordinate.

\begin{remark}[strong error]\label{rem:strongErrorSWE}
As observed for the discretization of the heat equation in Remark~\ref{rem:strongErrorSHE}, strong $L^2$-error estimates for the scheme \eqref{eq:schemeSWE1} carry over from the Gaussian case in the L\'{e}vy $L^2$-martingale case since they only use It\^{o}'s isometry \eqref{eq:ItoIsom}. Arguing as in the proof of \cite[Theorem~4.13]{KovLarLin13}, we obtain that, if
\begin{equation}\label{eq:SWEassQ}
\nnrm{\Lambda^{\frac{\beta-1}2}Q^{\frac12}}{\hs(\dot H^0)}<\infty\quad\text{ and }\quad X_0\in L^2(\Omega,\cF_0,\bP; \cH^{\beta})
\end{equation}
for some $\beta>0$, then the scheme \eqref{eq:schemeSWE1} approximates the first component $X_1=P^1X$ of the solution $X$ to \eqref{eq:mainEq} with strong order $\min(\beta r/(r+1),r)$ in space and $\min(\beta p/(p+1),1)$ in time:
\[\nnrm{X^n_{h,\dt,1}-X_1(t_n)}{L^2(\Omega;\cdot H^0)}\leq C\big(h^{\min(\beta\frac{r}{r+1},r)}+(\dt)^{\min(\beta\frac{p}{p+1},1)}\big),\quad n=0,\ldots,N.\]
Here we have set $X^n_{h,\dt,1}:=P^1X^n_{h,\dt}$. The condition \eqref{eq:SWEassQ} implies that the solution $X=(X(t))_{t\geq0}$ takes values in $ \cH^\beta$, cf.~\cite[Theorem~3.1]{KovLarSae10}.

\end{remark}

The solution to the scheme \eqref{eq:schemeSWE1} is given by
\begin{equation*}\label{eq:schemeSWE2}
X^n_{h,\dt}=E_{h,\dt}^nP_hX_0+\sum_{j=1}^n E^{n-j+1}_{h,\dt}P_hB(L(t_n)-L(t_{n-1})),\quad n=0,\ldots,N.
\end{equation*}
For $t\in[0,T]$, define operators $\tilde E(t)=\tilde E_{h,\dt}(t)\in\bo( H)$ by
\begin{equation}\label{eq:EtildeSWE}
\tilde E(t)=\tilde E_{h,\dt}(t):=\one_{\{0\}}(t)P_h+\sum_{j=1}^N\one_{(t_{n-1},t_n]}(t)E_{h,\dt}^nP_h,
\end{equation}
where the projection $P_h$ is understood as a mapping from $ H=\dot H^{0}\times\dot H^{-1}$ to $S_h\times S_h$. Then, analogously to the corresponding argument in Section~\ref{sec:Heat}, one sees that the $S_h\times S_h$-valued process $(\tilde X(t))_{t\in[0,T]}=(\tilde X_{h,\dt}(t))_{t\in[0,T]}$ defined by
\begin{equation}\label{eq:XtildeSWE}
\tilde X(t)=\tilde X_{h,\dt}(t):=\tilde E_{h,\dt}(t)X_0+\int_0^t\tilde E_{h,\dt}(t-s)B\,\dl L(s)
\end{equation}
satisfies $X^n_{h,\dt}=\tilde X(t_n)$ $\bP$-almost surely.

The proof of the deterministic error estimate in the next lemma is postponed to the end of this section.
\begin{lemma} \label{lem:SWE}
Let $\alpha\geq0$. The operators $E(t)$ and $\tilde E(t)=\tilde E_{h,\dt}(t)$ defined in \eqref{eq:E(t)SWE} and \eqref{eq:EtildeSWE} satisfy the error estimate
\begin{equation}\label{eq:detEstSWE}
\begin{gathered}
    \sup_{t\in[0,T]}\big(\nnrm{P^1(\tilde E(t)-E(t))}{\bo( \cH^\alpha,\dot H^0)}
    +\nnrm{P^1(\tilde E(t)-E(t))B}{\bo(\dot H^{(\alpha/2)-1},\dot H^{-\alpha/2})}\big)\\
    \leq C\big(h^{\min(\alpha\frac{r}{r+1},r)}+(\dt)^{\min(\alpha\frac{p}{p+1},1)}\big),
\end{gathered}
\end{equation}
for $\dt\leq1$, where $C=C(T)>0$ does not depend on $h$ and $\dt$.
\end{lemma}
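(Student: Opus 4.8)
The plan is to reduce \eqref{eq:detEstSWE} to known deterministic error bounds for finite element plus rational time-stepping approximations of the wave equation. The first observation is that, for $v=(v_1,v_2)^\top\in\cH^\alpha$, the first component of the exact evolution is $P^1E(t)v=C(t)v_1+\Lambda^{-1/2}S(t)v_2$, while on each interval $(t_{n-1},t_n]$ the first component of $\tilde E(t)v=E_{h,\dt}^nP_hv$ is a discrete cosine/sine combination built from $\Lambda_h$ and $R(\dt A_h)$. Hence both operator norms in \eqref{eq:detEstSWE} reduce to bounding the errors of the discrete approximations of $C(t)$ and of $\Lambda^{-1/2}S(t)$ between the indicated $\dot H$-spaces; since $B=(0,I)^\top$ annihilates the first coordinate, the second term isolates precisely the error of the discrete approximation of $\Lambda^{-1/2}S(t)$, the $(1,2)$-entry of the semigroup.

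For the core estimate I would split, for $t\in(t_{n-1},t_n]$,
\[
\tilde E(t)-E(t)=(E_{h,\dt}^n-E_h(t_n))P_h+(E_h(t_n)P_h-E(t_n))+(E(t_n)-E(t)),
\]
into a time-discretization error (rational approximation versus the semidiscrete semigroup), a space-discretization error (semidiscrete finite element versus exact), and a within-interval continuity error. I would first treat the space error $E_h(t_n)P_h-E(t_n)$: using the Ritz-projection estimate \eqref{eq:basicFEMestimate2}, the norm identities relating $\Lambda^\alpha$ and $\Lambda_h^\alpha$ on $S_h$, and the uniform-in-$t$ boundedness of $C(t),S(t)$ on every $\dot H^s$, one obtains the optimal rate $h^r$ for smooth data and, for rough data, the characteristic loss exponent $r/(r+1)$ by interpolating between the smooth-data bound and the uniform stability bound. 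This is exactly the semidiscrete deterministic estimate underlying the strong-error results of \cite{KovLarLin13} and \cite{KovLarSae10}, and I would invoke it in the fractional form needed here.

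Next I would treat the time error $E_{h,\dt}^n-E_h(t_n)$. Since $R$ is $I$-stable ($|R(iy)|\le1$) and approximates $e^{-iy}$ to order $p$, the standard Brenner--Thom\'ee/Baker--Bramble analysis (\cite{BakBra79}, \cite{BreTho80}) yields the order-$p$ rate $(\dt)^{p}$ for smooth data and the reduced exponent $p/(p+1)$ for rough data, again by interpolating between the order-$p$ bound and the stability bound $\|R(\dt A_h)^n\|_{\bo(H)}\le1$. The continuity error $E(t_n)-E(t)$ is harmless: from $|\cos(t_n\sqrt\lambda)-\cos(t\sqrt\lambda)|\le\min(2,\dt\sqrt\lambda)$ and the analogous sine bound one gets a contribution of order $(\dt)^{\min(\alpha,1)}$, which is dominated by the time-discretization rate since $\alpha\ge\alpha\frac{p}{p+1}$. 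Combining the three pieces, taking the supremum over $t\in[0,T]$, and incorporating the required powers of $\Lambda$ and $\Lambda_h$ into the data and measuring norms (so that the second term is reduced to the same $\Lambda^{-1/2}S$-estimate between $\dot H^{(\alpha/2)-1}$ and $\dot H^{-\alpha/2}$) gives \eqref{eq:detEstSWE}.

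The main obstacle I anticipate is producing the two fractional-order estimates with the \emph{precise} loss factors $r/(r+1)$ and $p/(p+1)$ simultaneously and in the two different operator norms: one must interpolate carefully between the optimal smooth-data bounds and the uniform stability bounds, track the mismatched powers of $\Lambda$ and the negative-order spaces $\dot H^{(\alpha/2)-1}$, $\dot H^{-\alpha/2}$, and correctly manage the interplay of $P_h$, $\Pi_h$ and the discrete operators $\Lambda_h$, $A_h$ (which do not commute with $\Lambda$). Once these deterministic wave-equation finite element estimates are available in the needed fractional generality, the assembly into \eqref{eq:detEstSWE} is routine.
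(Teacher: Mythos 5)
Your treatment of the first operator norm is essentially the paper's: the paper likewise reduces to the fully discrete error at the grid points (it cites Corollary~4.11 of \cite{KovLarLin13}, which rests on \cite{BakBra79}, rather than re-splitting into space, time and stability contributions as you do) together with the H\"older continuity $\nnrm{E(t)-E(s)}{\bo(\cH^\delta,H)}\leq C|t-s|^\delta$ to absorb the within-interval error, and your bookkeeping of the exponents there (including the observation that $(\dt)^{\min(\alpha,1)}$ is dominated by the time-stepping rate for $\dt\leq1$) is correct.

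The genuine gap is in the second operator norm. Your plan to ``incorporate the required powers of $\Lambda$ and $\Lambda_h$ into the data and measuring norms'' so that the estimate reduces to one for $\Lambda^{-1/2}S(t)$ cannot be carried out as stated: the discrete error operator is built from $P_h$, $\Lambda_h$ and $R(\dt A_h)^n$, none of which commute with $\Lambda$, so fractional powers of $\Lambda$ cannot be moved across $P^1(\tilde E(t)-E(t))B$; moreover, the available grid-point estimate only measures errors into $\dot H^0$, whereas the target space $\dot H^{-\alpha/2}$ has negative order. You yourself flag exactly this non-commutativity as ``the main obstacle,'' but you offer no mechanism to overcome it, and declaring the remaining assembly ``routine'' is precisely where the proof is incomplete. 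The paper's resolution is a specific symmetry--duality--interpolation argument: $P^1(\tilde E(t)-E(t))B$ is a symmetric operator on $\dot H^0$, hence $\nnrm{P^1(\tilde E(t)-E(t))B}{\bo(\dot H^{s},\dot H^0)}=\nnrm{P^1(\tilde E(t)-E(t))B}{\bo(\dot H^{0},\dot H^{-s})}$ by duality. For $\alpha>2$ one obtains the rate in $\bo(\dot H^{\alpha-1},\dot H^0)$ from the first-norm estimate composed with $\nnrm{B}{\bo(\dot H^{\alpha-1},\cH^\alpha)}=1$, transfers it by this duality to $\bo(\dot H^{0},\dot H^{1-\alpha})$, and then real interpolation (with $\theta=((\alpha/2)-1)/(\alpha-1)$) lands exactly in $\bo(\dot H^{(\alpha/2)-1},\dot H^{-\alpha/2})$; for $0\leq\alpha\leq2$ one instead interpolates between the $\alpha=2$ estimate in $\bo(\dot H^0,\dot H^{-1})$ and the uniform bound in $\bo(\dot H^{-1},\dot H^0)$, both again obtained via the symmetry. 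Without this step (or an equivalent re-derivation of the Baker--Bramble-type estimates in negative-order norms, which is substantial new work, not assembly) your argument does not yield \eqref{eq:detEstSWE}.
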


We are now in the position to prove the following result concerning the weak error of the approximation $X^N_{h,\dt,1}:=P^1X^N_{h,\dt}$ of the first component $X_1(T)=P^1X(T)$ of the solution to the stochastic wave eqation~\eqref{eq:mainEq} at time $T$.

\begin{theorem}\label{thm:SWE}
Let $X_0\in L^2(\Omega,\cF_0,\bP; \cH^{2\beta})$ for some $\beta>0$ and $g\in C^2(H,\bR)$ with $\sup_{x\in H}\nnrm{g''(x)}{\bo(H)}<\infty$. Suppose that either of the following conditions holds.
\begin{itemize}
\item[\upshape (i)] $\nnrm{\Lambda^{(\beta-1)/2}Q^{1/2}}{\hs(\dot H^0)}<\infty$ and
\begin{equation}\label{eq:assgSWE}
\sup_{x\in\dot H^0}\nnrm{\Lambda^{\frac\beta2}g''(x)\Lambda^{-\frac\beta2}}{\bo(\dot H^0)}<\infty.
\end{equation}
\item[\upshape (ii)]
\begin{equation}\label{eq:weqii}
\lim_{m\to\infty}\int_{U_1}\|\Lambda^{-\frac12}p_m y\|_{\dot{H}^0}\|\Lambda^{\beta-\frac{1}{2}}p_m y\|_{\dot{H}^0}\,\nu(\dl y)<\infty,
\end{equation}
where $p_m$ denotes the orthogonal projection from $\dot H^0$ to $\operatorname{span}\{\varphi_1,\ldots,\varphi_m\}$, $(\varphi_k)_{k\in\mathbb N}$ being an orthonormal basis of $\dot H^0$ consisting of eigenvectors of $\Lambda$.
\end{itemize}
Then, there is a unique weak solution $(X(t))_{t\geq0}$ to Eq.~\eqref{eq:mainEq} given by \eqref{eq:X}. 

Let $(X_{h,\dt}^n)_{n=0,\ldots,N}$ be given by the scheme \eqref{eq:schemeSWE1}. Then, there exists a constant $C=C(g,T)>0$ that does not depend on $h$ and $\dt$, such that for $\dt\leq 1$
$$
    \big|\bE\big(g(X^N_{h,\dt,1})-g(X_1(T))\big)\big|\leq C\big(h^{\min(2\beta\frac{r}{r+1},r)}+(\dt)^{\min(2\beta\frac{p}{p+1},1)}\big)
$$
\end{theorem}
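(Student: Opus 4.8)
The plan is to apply the error representation of Corollary~\ref{cor:errorRep} to the test function $G\colon H\to\bR$, $G(x):=g(P^1x)$. Since $G'(x)=(g'(P^1x),0)^\top$ and $G''(x)v=(g''(P^1x)P^1v,0)^\top$, one has $G\in C^2(H,\bR)$ with $\sup_x\nnrm{G''(x)}{\bo(H)}=\sup_x\nnrm{g''(x)}{\bo(\dot H^0)}<\infty$, and moreover $G(X(T))=g(X_1(T))$, $G(\tilde X(T))=g(X^N_{h,\dt,1})$, so $e(T)$ is exactly the error to be bounded. Throughout I would record that $P^1E(t)B=\Lambda^{-1/2}S(t)$, that $F(t)=(\tilde E(t)-E(t))B$ so that $P^1F(t)y=P^1(\tilde E(t)-E(t))By$, and that by \eqref{eq:u_xu_xx} the operator $u_{xx}(t,x)$ acts on $H$ only through $\bE\,g''$ on the first coordinate, whence $\Psi_1,\Psi_2$ involve only $P^1E(T-t)By$ and $P^1F(T-t)y$ in $\dot H^0$. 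Existence of $X$ follows, as in the Gaussian case, once $\nnrm{\Lambda^{-1/2}Q^{1/2}}{\hs(\dot H^0)}<\infty$; under (i) this is immediate from the assumed Hilbert--Schmidt bound, while under (ii) it follows since $\nnrm{\Lambda^{(\beta-1)/2}Q^{1/2}}{\hs(\dot H^0)}^2=\int_{U_1}\nnrm{\Lambda^{(\beta-1)/2}y}{\dot H^0}^2\,\nu(\dl y)\le\int_{U_1}\nnrm{\Lambda^{-1/2}y}{\dot H^0}\nnrm{\Lambda^{\beta-1/2}y}{\dot H^0}\,\nu(\dl y)$, using $\nnrm{\Lambda^{(\beta-1)/2}y}{\dot H^0}^2=\langle\Lambda^{-1/2}y,\Lambda^{\beta-1/2}y\rangle_{\dot H^0}$ and \eqref{eq:weqii}, and then $\Lambda^{-1/2}=\Lambda^{-\beta/2}\Lambda^{(\beta-1)/2}$ with $\Lambda^{-\beta/2}$ bounded.

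For the initial term in \eqref{eq:errorRep4} I would apply the mean value theorem and $u_x(0,x)=\bE\,g'(P^1Z(T,0,x))$ to reduce it to $\langle\,\cdot\,,P^1(\tilde E(T)-E(T))X_0\rangle_{\dot H^0}$; the first factor is controlled in $L^2(\Omega;\dot H^0)$ by the linear growth \eqref{eq:growthGG'} of $g'$ together with It\^o's isometry, and the second by the first estimate in Lemma~\ref{lem:SWE} with $\alpha=2\beta$ applied to $(0,\,\cdot\,)^\top$, against $\nnrm{X_0}{L^2(\Omega;\cH^{2\beta})}$, giving the claimed rate. For the $\Psi_1$ contribution I would pull out $\sup_x\nnrm{g''(x)}{\bo(\dot H^0)}$ and observe that $\int_0^T\!\int_{U_1}\nnrm{P^1F(T-t)y}{\dot H^0}^2\,\nu(\dl y)\,\dl t=\int_0^T\nnrm{P^1F(T-t)}{\hs(U_0,\dot H^0)}^2\,\dl t$ equals, by It\^o's isometry, the squared strong error of the first component with zero initial datum, which by Remark~\ref{rem:strongErrorSWE} is $O\big(h^{2\min(\beta\frac{r}{r+1},r)}+(\dt)^{2\min(\beta\frac{p}{p+1},1)}\big)$; since doubling the strong exponents only improves them (and $h,\dt\le1$) this is dominated by the target rate.

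The heart of the matter is the cross term $\Psi_2$, where a plain Cauchy--Schwarz bound reproduces only the strong rate and one must use the product structure to gain the extra order. Under (i) I would exploit the self-adjointness of $g''$ to commute $\Lambda^{\beta/2}$ through it, writing $\langle g''(\,\cdot\,)a,b\rangle_{\dot H^0}=\langle\Lambda^{\beta/2}g''(\,\cdot\,)\Lambda^{-\beta/2}\,\Lambda^{\beta/2}a,\Lambda^{-\beta/2}b\rangle_{\dot H^0}$ with $a=\Lambda^{-1/2}S(T-t)y$ and $b=P^1F(T-t)y$; the middle operator is bounded by \eqref{eq:assgSWE}, and a Cauchy--Schwarz step in $\nu$ then splits the integral into the factor $\nnrm{\Lambda^{(\beta-1)/2}S(T-t)}{\hs(U_0,\dot H^0)}\le\nnrm{\Lambda^{(\beta-1)/2}Q^{1/2}}{\hs(\dot H^0)}$ and the factor $\nnrm{\Lambda^{-\beta/2}P^1F(T-t)}{\hs(U_0,\dot H^0)}$. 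I would factor the latter as $\big[\Lambda^{-\beta/2}P^1F(T-t)\Lambda^{(1-\beta)/2}\big]\big[\Lambda^{(\beta-1)/2}Q^{1/2}\big]$ and, using that $\Lambda^{(1-\beta)/2}\colon\dot H^0\to\dot H^{\beta-1}$ and $\Lambda^{-\beta/2}\colon\dot H^{-\beta}\to\dot H^0$ are isometries, identify the bracket's operator norm with $\nnrm{P^1F(T-t)}{\bo(\dot H^{\beta-1},\dot H^{-\beta})}$, which is $O\big(h^{\min(2\beta\frac{r}{r+1},r)}+(\dt)^{\min(2\beta\frac{p}{p+1},1)}\big)$ by the second estimate in Lemma~\ref{lem:SWE} with $\alpha=2\beta$. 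Collecting the three contributions yields the asserted bound in case (i).

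Under (ii) the commutator step is unavailable, and this is the main obstacle: the point is that the required order can instead be extracted pointwise in $y$ from the first estimate in Lemma~\ref{lem:SWE} rather than from a commutator through $g''$. I would keep $\sup_x\nnrm{g''(x)}{\bo(\dot H^0)}$ and bound $|\Psi_2(t,\theta,y)|\le C\,\nnrm{\Lambda^{-1/2}S(T-t)y}{\dot H^0}\,\nnrm{P^1F(T-t)y}{\dot H^0}$, then use $\nnrm{\Lambda^{-1/2}S(T-t)y}{\dot H^0}\le\nnrm{\Lambda^{-1/2}y}{\dot H^0}$ and, crucially, $\nnrm{P^1F(T-t)y}{\dot H^0}=\nnrm{P^1(\tilde E(T-t)-E(T-t))(0,y)^\top}{\dot H^0}\le C\big(h^{\min(2\beta\frac{r}{r+1},r)}+(\dt)^{\min(2\beta\frac{p}{p+1},1)}\big)\nnrm{(0,y)^\top}{\cH^{2\beta}}$, where $\nnrm{(0,y)^\top}{\cH^{2\beta}}=\nnrm{\Lambda^{\beta-1/2}y}{\dot H^0}$. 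The remaining noise integral $\int_{U_1}\nnrm{\Lambda^{-1/2}y}{\dot H^0}\nnrm{\Lambda^{\beta-1/2}y}{\dot H^0}\,\nu(\dl y)$ is finite by \eqref{eq:weqii} together with monotone convergence along the spectral projections $p_m$ (the product $\nnrm{\Lambda^{-1/2}p_my}{\dot H^0}\nnrm{\Lambda^{\beta-1/2}p_my}{\dot H^0}$ increases to its untruncated value), which gives the target rate for $\Psi_2$ and completes the proof. The delicate point I would watch most carefully is precisely that the two hypotheses force genuinely different routes for $\Psi_2$ --- (i) via the $\bo(\dot H^{\beta-1},\dot H^{-\beta})$ bound after a commutator step, (ii) via the $\bo(\cH^{2\beta},\dot H^0)$ bound paired with the noise condition and a spectral truncation rather than an It\^o-isometry argument.
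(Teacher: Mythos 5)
Your overall strategy coincides with the paper's: the same test function $G=g\circ P^1$, the same treatment of the initial-data term via the first estimate of Lemma~\ref{lem:SWE} with $\alpha=2\beta$, the same reduction of the $\Psi_1$-term to the squared strong error via It\^o's isometry, and, in case (i), the same commutation of $\Lambda^{\beta/2}$ through $g''$ using \eqref{eq:assgSWE} followed by the $\bo(\dot H^{\beta-1},\dot H^{-\beta})$ estimate of Lemma~\ref{lem:SWE} with $\alpha=2\beta$ and $\nnrm{S(T-t)}{\bo(\dot H^0)}\leq1$. Up to and including case (i), your argument is correct and essentially identical to the paper's (the only cosmetic difference being that you apply Cauchy--Schwarz in $\nu$ to produce Hilbert--Schmidt norms before factoring out operator norms, whereas the paper factors at operator level first).

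In case (ii), however, there is a genuine gap. Your estimate is pointwise in $y\in U_1$: you bound $|\Psi_2(t,\theta,y)|$ by $C\,\nnrm{\Lambda^{-1/2}y}{\dot H^0}\,\nnrm{P^1F(T-t)y}{\dot H^0}$ and then by $C(\text{rate})\,\nnrm{\Lambda^{-1/2}y}{\dot H^0}\,\nnrm{\Lambda^{\beta-1/2}y}{\dot H^0}$. But under hypothesis (ii) the quantity $\Lambda^{\beta-1/2}y$ is not defined for $\nu$-generic $y\in U_1$: by Lemma~\ref{lem:integrandIsomorphism} and Remark~\ref{not:Phix}, an expression $\Phi y$ only makes sense in an $L^2(U_1,\nu;\dot H^0)$-sense when $\Phi Q^{1/2}$ is Hilbert--Schmidt, and \eqref{eq:weqii} yields $\nnrm{\Lambda^{(\beta-1)/2}Q^{1/2}}{\hs(\dot H^0)}<\infty$ but \emph{not} $\nnrm{\Lambda^{\beta-1/2}Q^{1/2}}{\hs(\dot H^0)}<\infty$ (the exponent $\beta-\tfrac12$ strictly exceeds $\tfrac{\beta-1}2$). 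The same defect affects your untruncated identity in the existence argument and the pointwise inequality $\nnrm{P^1F(T-t)y}{\dot H^0}\le C(\text{rate})\nnrm{(0,y)^\top}{\cH^{2\beta}}$. Your appeal to monotone convergence along $p_m$ repairs only the finiteness of the final noise integral, not the well-definedness of the intermediate steps. The paper resolves this by inserting the projections into the equation itself: it replaces $B$ by $Bp_m$, defines $X^{[m]}$, $\tilde X^{[m]}$ and the truncated error $e^{[m]}(T)$, proves exactly your estimates for $e^{[m]}(T)$ (legitimate there, since $\Lambda^\alpha p_m$ is finite rank, with constants uniform in $m$ because $\nnrm{p_m\Lambda^{(\beta-1)/2}Q^{1/2}}{\hs(\dot H^0)}\leq\nnrm{\Lambda^{(\beta-1)/2}Q^{1/2}}{\hs(\dot H^0)}$ and by \eqref{eq:weqii}), and then shows $e^{[m]}(T)\to e(T)$ using It\^o's isometry, $\nnrm{(I-p_m)\Lambda^{-1/2}Q^{1/2}}{\hs(\dot H^0)}\to0$ and the growth bound \eqref{eq:growthGG'}. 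This truncation-and-limit step (or an equivalent justified change of state space to $U_1=\dot H^{\beta-1}$) is the missing ingredient that your proposal needs to be complete.
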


Before proving Theorem~\ref{thm:SWE}, we state two remarks and discuss some examples where the conditions of Theorem \ref{thm:SWE}, in particular \eqref{eq:assgSWE} and \eqref{eq:weqii}, are satisfied.

\begin{remark}
As a consequence of Lemma~\ref{lem:integrandIsomorphism} and the fact that $\Lambda^\alpha p_m\in\hs(\dot H^0)$ for all $\alpha\in\bR$ and $m\in\bN$, the terms $\Lambda^{-1/2}p_m y$ and $\Lambda^{\beta-1/2}p_m y$ in \eqref{eq:weqii} are defined in an $L^2(U_1,\nu(\dl y);\dot H^0)$-sense. The sequence $\big(\|\Lambda^{-1/2}p_m y\|_{\dot{H}^0}\|\Lambda^{\beta-1/2}p_m y\|_{\dot{H}^0}\big)_{m\in\bN}$ is monotonically increasing for $\nu$-almost all $y\in U_1$, so that the limit in \eqref{eq:weqii} is in fact a supremum. Moreover, if we explicitly choose $U_1=\dot H^{\beta-1}$ as the state space of $L$, then the condition~(ii) is equivalent to assuming that $\operatorname{supp}\nu\subset \dot H^{2\beta-1}$ and
\[\int_{\dot H^{2\beta-1}}\|\Lambda^{-\frac12}y\|_{\dot{H}^0}\|\Lambda^{\beta-\frac{1}{2}}y\|_{\dot{H}^0}\,\nu(\dl y)<\infty .\]
This choice of $U_1$ is possible w.l.o.g.\ whenever $\nnrm{\Lambda^{(\beta-1)/2}Q^{1/2}}{\hs(\dot H^0)}<\infty$, since then the natural embedding of $U_0=Q^{1/2}U=Q^{1/2}\dot H^0$ into $\dot H^{\beta-1}$ is Hilbert-Schmidt and we can re-expand $L$ in the form \eqref{eq:LPexpansion} as an $\dot H^{\beta-1}$-valued martingale, compare Remark~\ref{rem:approxL}. However, in the spirit of, e.g., \cite{AppRie10,Rie12,Rie14}, we prefer a formulation of our results that is independent of the specific choice of the state space $U_1$.
\end{remark}

\begin{remark}
Instead of the symmetric condition $\nnrm{\Lambda^{(\beta-1)/2}Q^{1/2}}{\hs(\dot H^0)}<\infty$, the sufficient asymmetric condition $\nnrm{\Lambda^{\beta-1/2}Q\Lambda^{-1/2}}{\nuc(\dot{H}^0)}<\infty$ is imposed in \cite{KovLarLin13} in the Wiener case in order to double the rate of strong convergence for the wave equation.
The asymmetric condition \eqref{eq:weqii}
appearing in (ii) above, which is again sufficient for\linebreak $\nnrm{\Lambda^{(\beta-1)/2}Q^{1/2}}{\hs(\dot H^0)}<\infty$,
resembles the same situation in the present case.
\end{remark}


\begin{example}
An important basic example which satisfies \eqref{eq:assgSWE} is $g(x)=\|x\|^2_{\dot{H}^0}$.
\end{example}

\begin{example}
As another example for a test function $g$ satisfying \eqref{eq:assgSWE} consider
\begin{equation*}
g(x):=f(\langle\varphi_1,x\rangle_{\dot H^0},\ldots,\langle\varphi_n,x\rangle_{\dot H^0}),\quad x\in \dot H^0,
\end{equation*}
where $f\in C^2(\bR^n,\bR)$ has bounded second order derivatives and $(\varphi_k)_{k\in\bN}\subset D(\Lambda)$ is an orthonormal basis of $\dot H^0=L^2(\dom)$ consisting of eigenfunctions of $\Lambda $ with corresponding eigenvalues $(\lambda_k)_{k\in\bN}\subset(0,\infty)$. Then, for $x,y\in\dot H^0$,
\[
\Lambda^{\beta/2}g''(x)\Lambda^{-\beta/2}y=\sum_{j,k=1}^n\lambda_j^{-\beta/2}\lambda_k^{\beta/2}(\partial_j\partial_kf)\big(\langle\varphi_1,x\rangle_{\dot H^0},\ldots,\langle\varphi_n,x\rangle_{\dot H^0}\big)\langle\varphi_j,y\rangle_{\dot H^0}\varphi_k
\]
and \eqref{eq:assgSWE} holds. More generally, the condition \eqref{eq:assgSWE} is satisfied by all $g\in C^2(\dot H^0,\bR)$ of the form $g=\tilde g\circ\Lambda^{-\beta/2}$ with $\tilde g\in C^2(\dot H^0,\bR)$ satisfying $\sup_{x\in\dot H^0}\nnrm{\tilde g''(x)}{\bo(\dot H^0)}<\infty$. For such $g$ we have $g''(x)=\Lambda^{-\beta/2}\tilde g''(\Lambda^{-\beta/2}x)\Lambda^{-\beta/2}$.
\end{example}

\begin{example}\label{ex:spec}
Consider the situation of Example \ref{ex:1dlevy}; that is when
\[\nu=\sum_{k\in\bN}\nu_k\circ\pi_k^{-1},\]
where $\nu_k$ is the L\'{e}vy measure of $L_k$ and $\pi_k:\bR\to U_1$ is defined by $\pi_k(\xi):=\xi e_k$. Let us assume that $e_k=\sqrt{q_k}\,\varphi_k$, where $(q_k)_{k\in\bN}$ is a bounded sequence of positive numbers and $(\varphi_k)_{k\in \bN}$ is an orthonormal basis of $\dot{H}^0$ consisting of eigenfunctions of $\Lambda$ with corresponding eigenvalues $(\lambda_k)_{k\in \bN}$.
Then, with $p_m$ as in (ii) in Theorem \ref{thm:SWE},
\begin{align*}
&\lim_{m\to\infty}\int_{U_1}\|\Lambda^{-\frac12}p_m y\|_{\dot{H}^0}\|\Lambda^{\beta-\frac{1}{2}}p_m y\|_{\dot{H}^0}\,\nu(\dl y)\\
&=\sum_{k\in \bN}\int_{\bR}\xi^2q_k\|\Lambda^{-\frac12}\varphi_k\|_{\dot{H}^0}\|\Lambda^{\beta-\frac12}\varphi_k\|_{\dot{H}^0}\,\nu_k(\dl \xi)\\
&=\sum_{k\in \bN}\int_{\bR}\xi^2q_k\lambda_k^{-\frac12}\|\varphi_k\|_{\dot{H}^0}\lambda_k^{\beta-\frac12}\|\varphi_k\|_{\dot{H}^0}\,\nu_k(\dl \xi)\\
&=\sum_{k\in \bN}\int_{\bR}\xi^2q_k\|\Lambda^{\frac{\beta-1}2}\varphi_k\|_{\dot{H}^0}\|\Lambda^{\frac{\beta-1}2}\varphi_k\|_{\dot{H}^0}\,\nu_k(\dl \xi)\\
&=\lim_{m\to\infty}\int_{U_1}\|\Lambda^{\frac{\beta-1}2}p_my\|_{\dot{H}^0}\|\Lambda^{\frac{\beta-1}2}p_my\|_{\dot{H}^0}\,\nu(\dl y)
\,=\,\nnrm{\Lambda^{\frac{\beta-1}2}Q^{\frac12}}{\hs(\dot H^0)}^2,
\end{align*}
where we used Lemma~\ref{lem:integrandIsomorphism} in the last step.
That is, when $\nu$ is concentrated on the eigenspaces $\{r\varphi_k:r\in\bR\}$, $k\in\bN$, of $\Lambda$, then the abstract asymmetric condition \eqref{eq:weqii} coincides with the familiar symmetric Hilbert-Schmidt condition. The situation is similar in the Wiener case \cite{KovLarLin13} when $\Lambda$ and $Q$ commute.
\end{example}

\begin{proof}[Proof of Theorem~\ref{thm:SWE}]
First suppose that (i) holds. We apply Theorem~\ref{thm:errorRep} and Corollary~\ref{cor:errorRep} with $G=g\circ P^1$. Note that $G'(x)=(P^1)^*g'(P^1x)\in H$ and $$G''(x)=(P^1)^*g''(P^1x)P^1\in\bo( H)$$ for all $x\in H$, where $(P^1)^*\in\bo(\dot H^0, H)$ is the Hilbert space adjoint of $P^1\in\bo( H,\dot H^0)$. Using \eqref{eq:u_xu_xx}
one obtains
\begin{equation}\label{eq:proofSWE0}
u_x(t,\xi)=\bE\big((P^1)^*g'(P^1Z(T,t,x))\big)\big|_{x=\xi},\quad u_{xx}(t,\xi)=\bE\big((P^1)^*g''(P^1Z(T,t,x))P^1\big)\big|_{x=\xi}
\end{equation}
for all $H$-valued random variables $\xi$ and $t\in[0,T]$.


We combine \eqref{eq:wstab}, \eqref{eq:proofSWE0} and the deterministic error estimate \eqref{eq:detEstSWE} with $\alpha=2\beta$ in order to estimate the first term on the right hand side of the error representation formula~\eqref{eq:errorRep4} in Corollary~\ref{cor:errorRep}. We have, where the first inequality follows similarly as for the stochastic heat equation, that
\begin{equation}\label{eq:proofSWE1}
\begin{aligned}
\big|\bE\big\{&u(0,\tilde E(T)X_0)-u(0,E(T)X_0)\big\}\big|\\
&\leq \int_0^1\big\|g'\big(P^1Z\big(T,0,Y(0)+\theta(\tilde Y(0)-Y(0))\big)\big)\big\|_{L^2(\Omega, \dot{H}^0)}\,\dl\theta\\
&\qquad \times \|P^1(\tilde E(T)-E(T))X_0\|_{L^2(\Omega, \dot{H}^0)}\\
&\leq C\big(1+\int_0^1\big\|Z\big(T,0,Y(0)+\theta(\tilde Y(0)-Y(0))\big)\big\|_{L^2(\Omega, H)}\,\dl \theta\big)\\
&\qquad \times \nnrm{P^1(\tilde E(T)-E(T))}{\bo( \cH^{2\beta},\dot H^0)}\,\nnrm{X_0}{L^2(\Omega; \cH^{2\beta})}\\
&\leq C\big(1+\nnrm{\Lambda^{-1/2}Q^{1/2}}{\hs(\dot H^0)}+\nnrm{X_0}{L^2(\Omega; H)}\big)\,\nnrm{X_0}{L^2(\Omega; \cH^{2\beta})}\\
&\qquad \times \big(h^{\min(2\beta\frac{r}{r+1},r)}+(\dt)^{\min(2\beta\frac{p}{p+1},1)}\big).
\end{aligned}
\end{equation}
Using \eqref{eq:proofSWE0}, Lemma~\ref{lem:integrandIsomorphism} and Remark~\ref{not:Phix}, the integral of the function $\Psi_1$ in the second term on the right hand side of the formula \eqref{eq:errorRep4} can be treated as follows:
\begin{equation}\label{eq:proofSWE2}
\begin{aligned}
\Big|\bE&\int_0^T\int_{ U_1}\int_0^1\Psi_1(t,\theta,y)\,\dl\theta\,\nu(\dl y)\,\dl t\Big|\\
&= \Big|\bE\int_0^T\int_{ U_1}\int_0^1(1-\theta)\Big\langle\bE\big(g''\big(P^1Z(T,t,x+E(T-t)By+\theta F(T-t)y)\big)\big)\big|_{x=\tilde Y(t)}\\
&\quad\times P^1F(T-t)y,\;P^1F(T-t)y\Big\rangle_{\dot H^0}\dl\theta\,\nu(\dl y)\,\dl t\Big|\\
&\leq \sup_{x\in\dot H^0}\nnrm{g''(x)}{\bo(\dot H^0)}\int_0^T\nnrm{P^1F(T-t)}{\hs( U_0,\dot H^0)}^2\dl t\\
&\leq \sup_{x\in\dot H^0}\nnrm{g''(x)}{\bo(\dot H^0)} C \big(h^{\min(\beta\frac{r}{r+1},r)}+(\dt)^{\min(\beta\frac{p}{p+1},1)}\big)^2
\end{aligned}.
\end{equation}
The last step in \eqref{eq:proofSWE2} is due to the fact that, by It\^{o}'s isometry \eqref{eq:ItoIsom}, the integral in the penultimate line is the square of the strong error $\nnrm{X^N_{h,k,1}-X_1(T)}{L^2(\Omega;\dot H^0)}$ for zero initial condition $X_0=0$; it can be estimated as in the Gaussian case \cite[Theorem~4.13]{KovLarLin13}, compare Remark~\ref{rem:strongErrorSWE}.

Concerning the integral of the function $\Psi_2$ in the second term on the right hand side of Eq.~\eqref{eq:errorRep4}, we have by \eqref{eq:proofSWE0}, Lemma~\ref{lem:integrandIsomorphism}, \eqref{eq:estHS} and since $U_0=Q^{1/2}(U)=Q^{1/2}(\dot H^0)$,
\begin{equation}\label{eq:proofSWE3}
\begin{aligned}
\Big|\bE&\int_0^T\int_{ U_1}\int_0^1\Psi_2(t,\theta,y)\,\dl\theta\,\nu(\dl y)\,\dl t\Big|\\
&= \Big|\bE\int_0^T\int_{ U_1}\int_0^1\Big\langle\bE\big(g''\big(P^1Z(T,t,x+\theta E(T-t)By)\big)\big)\big|_{x=\tilde Y(t)}\\
&\quad\times P^1E(T-t)By,P^1F(T-t)y\Big\rangle_{\dot H^0}\dl\theta\,\nu(\dl y)\,\dl t\Big|\\
&= \Big|\bE\int_0^T\int_{ U_1}\int_0^1\Big\langle\bE\big(\Lambda^{\frac\beta2}g''\big(P^1Z(T,t,x+\theta E(T-t)By)\big)\Lambda^{-\frac\beta2}\big)\big|_{x=\tilde Y(t)}\\
&\quad\times \Lambda^{\frac\beta2}P^1E(T-t)B\Lambda^{\frac{1-\beta}2}\Lambda^{\frac{\beta-1}2}y,\;\Lambda^{-\frac\beta2}P^1F(T-t)\Lambda^{\frac{1-\beta}2}\Lambda^{\frac{\beta-1}2}y\Big\rangle_{\dot H^0}\dl\theta\,\nu(\dl y)\,\dl t\Big|\\
&\leq \sup_{x\in\dot H^0}\nnrm{\Lambda^{\frac\beta2}g''(x)\Lambda^{-\frac\beta2}}{\bo(\dot H^0)}\nnrm{\Lambda^{\frac{\beta-1}2}Q^{\frac12}}{\hs(\dot H^0)}^2\\
&\quad\times \int_0^T\nnrm{\Lambda^{\frac\beta2}P^1E(T-t)B\Lambda^{\frac{1-\beta}2}}{\bo(\dot H^0)}\nnrm{\Lambda^{-\frac\beta2}P^1F(T-t)\Lambda^{\frac{1-\beta}2}}{\bo(\dot H^0)}\,\dl t.
\end{aligned}
\end{equation}
Note that, by the definition of $B = (0,I)^\top$ and $E(t)$ from \eqref{eq:E(t)SWE} we have
\begin{equation}\label{eq:etb}
\nnrm{\Lambda^{\frac\beta2}P^1E(T-t)B\Lambda^{\frac{1-\beta}2}}{\bo(\dot H^0)}
=\nnrm{\Lambda^{\frac{\beta-1}2}S(T-t)\Lambda^{\frac{1-\beta}2}}{\bo(\dot H^0)}
=\nnrm{S(T-t)}{\bo(\dot H^0)}\leq1;
\end{equation}
it remains to estimate the integral
\begin{equation*}\label{eq:proofSWE4}
\begin{aligned}
\int_0^T\nnrm{\Lambda^{-\frac\beta2}P^1F(T-t)\Lambda^{\frac{1-\beta}2}}{\bo(\dot H^0)}\dl t
&=\int_0^T\nnrm{P^1F(t)}{\bo(\dot H^{\beta-1},\dot H^{-\beta})}\dl t\\
&=\int_0^T\nnrm{P^1(\tilde E(t)-E(t))B}{\bo(\dot H^{\beta-1},\dot H^{-\beta})}\dl t.
\end{aligned}
\end{equation*}
To this end, it suffices to apply the deterministic error estimate \eqref{eq:detEstSWE} with $\alpha=2\beta$. The combination of \eqref{eq:proofSWE1}, \eqref{eq:proofSWE2} and \eqref{eq:proofSWE3} finishes the proof.

Next, suppose that (ii) holds.
By Lemma~\ref{lem:integrandIsomorphism} we have
\begin{align*}
\nnrm{p_m\Lambda^{(\beta-1)/2}Q^{1/2}}{\hs(\dot H^0)}^2
&=\nnrm{\Lambda^{(\beta-1)/2}p_mQ^{1/2}}{\hs(\dot H^0)}^2
=\int_{U_1}\|\Lambda^{(\beta-1)/2}p_my\|_{\dot{H}^0}^2\,\nu(\dl y)\\
&=\int_{U_1}\langle\Lambda^{-1/2}p_my,\Lambda^{\beta-1/2}p_my\rangle_{\dot{H}^0}\,\nu(\dl y)\\
&\leq\int_{U_1}\|\Lambda^{-1/2}p_my\|_{\dot{H}^0}\|\Lambda^{\beta-1/2}p_my\|_{\dot{H}^0}\,\nu(\dl y),
\end{align*}
hence,
$$
    \nnrm{\Lambda^{(\beta-1)/2}Q^{1/2}}{\hs(\dot H^0)}^2
    =\lim_{m\to\infty}\nnrm{p_m\Lambda^{(\beta-1)/2}Q^{1/2}}{\hs(\dot H^0)}^2
    < \infty,
$$
proving that there is a unique weak solution $(X(t))_{t\geq0}$ to Eq.~\eqref{eq:mainEq} given by \eqref{eq:X}.
To estimate the weak error, we apply an approximation procedure and consider for $m\in\bN$ the $H$-valued random variables
$X^{[m]}(T):=E(T)X_0+\int_0^TE(T-s)Bp_m\,\dl L(s)$ and $\tilde X^{[m]}(T)=\tilde X^{[m]}_{h,\dt}(T):=\tilde E(T)X_0+\int_0^T\tilde E(T-s)Bp_m\,\dl L(s)$. Using It\^{o}'s isometry and the fact that $\nnrm{(I-p_m)\Lambda^{-1/2}Q^{1/2}}{\hs(\dot H^0)}\to 0$ as $m\to\infty$, we get that $X^{[m]}(T)\xrightarrow{m\to\infty} X(T)$ and $\tilde X^{[m]}(T)\xrightarrow{m\to\infty}\tilde X(T)$ in $L^2(\Omega;H)$. As a consequence of this and \eqref{eq:growthGG'}, we obtain $e^{[m]}(T):=\bE\big( G(\tilde X^{[m]}(T))-G(X^{[m]}(T))\big)\xrightarrow{m\to\infty}  e(T)$. Thus, it suffices to show the desired decay rate for the error $e^{[m]}(T)$ with a constant that does not depend on $m$. To this end, we observe that the estimate \eqref{eq:proofSWE1} can be used without any change, and that the analogue to the estimate \eqref{eq:proofSWE2} gives indeed the desired rate if we use that $\nnrm{p_m\Lambda^{(\beta-1)/2}Q^{1/2}}{\hs(\dot H^0)}\leq\nnrm{\Lambda^{(\beta-1)/2}Q^{1/2}}{\hs(\dot H^0)}<\infty$. Finally, the estimate corresponding to \eqref{eq:proofSWE3} reads

\begin{equation*}\label{eq:proofSWE3b}
\begin{aligned}
\Big|&\bE\int_0^T\int_{U_1}\int_0^1\Big\langle\bE\big(g''\big(P^1Z^{[m]}(T,t,x+\theta E(T-t)Bp_my)\big)\big)\big|_{x=\tilde Y^{[m]}(t)}\\
&\quad\times P^1E(T-t)Bp_my,P^1F(T-t)p_my\Big\rangle_{\dot H^0}\dl\theta\,\nu(\dl y)\,\dl t\Big|\\
&= \Big|\bE\int_0^T\int_{U_1}\int_0^1\Big\langle\bE\big(g''\big(P^1Z^{[m]}(T,t,x+\theta E(T-t)Bp_my)\big)\big)\big|_{x=\tilde Y^{[m]}(t)}\\
&\quad\times P^1E(T-t)B\Lambda^{1/2}\Lambda^{-1/2}p_my,P^1F(T-t)\Lambda^{1/2-\beta}\Lambda^{\beta-1/2}p_my\Big\rangle_{\dot H^0}\dl\theta\,\nu(\dl y)\,\dl t\Big|\\
&\leq \sup_{x\in\dot H^0}\nnrm{g''(x)}{\bo(\dot H^0)}\int_0^T\nnrm{P^1E(T-t)B\Lambda^{1/2}}{\bo(\dot H^0)}\nnrm{P^1F(T-t)\Lambda^{1/2-\beta}}{\bo(\dot H^0)}\,\dl t\\
&\quad\times \int_{U_1}\|\Lambda^{-1/2}p_my\|_{\dot{H}^0}\|\Lambda^{\beta-1/2}p_my\|_{\dot{H}^0}\,\nu(\dl y),
\end{aligned}
\end{equation*}
where $Z^{[m]}$ and $\tilde Y^{[m]}$ are defined by replacing $B$ by $Bp_m$ in the definitions of $Z$ and $\tilde Y$.
By \eqref{eq:detEstSWE} with $\alpha=2\beta$
and the fact that $\nnrm{B}{\bo(\dot H^{\alpha-1}, \cH^\alpha)}=1$ we have that
\begin{align*}
&\sup_{t\in[0,T]}\nnrm{P^1F(T-t)\Lambda^{1/2-\beta}}{\bo(\dot H^0)}=\sup_{t\in[0,T]}\nnrm{P^1(\tilde E(t)-E(t))B\Lambda^{1/2-\beta}}{\bo(\dot H^0)}\\
&\quad =\sup_{t\in[0,T]}\nnrm{P^1(\tilde E(t)-E(t))B}{\bo(\dot H^{2\beta-1},\dot H^0)}\leq C\big(h^{\min(2\beta\frac{r}{r+1},r)}+(\dt)^{\min(2\beta\frac{p}{p+1},1)}\big).
\end{align*}
Finally, by \eqref{eq:weqii} and \eqref{eq:etb} with $\beta =0$, the proof is complete.
\end{proof}

\begin{proof}[Proof of Lemma~\ref{lem:SWE}]
We use the estimates
\begin{equation}\label{eq:detEstSWE1}
\sup_{n\in\{0,\ldots,N\}}\nnrm{P^1(E^n_{h,\dt}P_h-E(t_n))}{\bo( \cH^\alpha,\dot H^0)}\leq C(T)\big(h^{\min(\alpha\frac{r}{r+1},r)}+(\dt)^{\min(\alpha\frac{p}{p+1},p)}\big)
\end{equation}
and
\begin{equation}\label{eq:detEstSWE2}
\nnrm{E(t)-E(s)}{\bo( \cH^\delta, H)}\leq C|t-s|^\delta,\quad t,\,s\geq0,\;\delta\in[0,1].
\end{equation}
from Corollary~4.11 and Lemma~4.4 in \cite{KovLarLin13}. Corollary~4.11 in \cite{KovLarLin13} is based on an error estimate proved in \cite{BakBra79}.

Because of the `piecewise' definition of $\tilde E(t)$ in \eqref{eq:EtildeSWE}, the combination of \eqref{eq:detEstSWE1} and \eqref{eq:detEstSWE2} gives
\begin{equation}\label{eq:detEstSWE3}
\begin{aligned}
&\sup_{t\in[0,T]}\nnrm{P^1(\tilde E(t)-E(t))}{\bo( \cH^\alpha,\dot H^0)}\\
&\leq \sup_{n\in\{0,\ldots,N\}}\nnrm{P^1(\tilde E(t_n)-E(t_n))}{\bo( \cH^\alpha,\dot H^0)}+\sup_{n\in\{1,\ldots,N\}}\sup_{t\in(t_{n-1},t_n)}\nnrm{E(t_n)-E(t)}{\bo( \cH^\alpha,\cH)}\\
&\leq C(T)\big(h^{\min(\alpha\frac{r}{r+1},r)}+(\dt)^{\min(\alpha\frac{p}{p+1},p)}+(\dt)^{\min(\alpha,1)}\big)\\
&=C(T)\big(h^{\min(\alpha\frac{r}{r+1},r)}+(\dt)^{\min(\alpha\frac{p}{p+1},1)}\big)
\end{aligned}
\end{equation}
for $\dt\leq 1$.
It remains to show that
\begin{equation}\label{eq:detEstSWE4}
\sup_{t\in[0,T]}\nnrm{P^1(\tilde E(t)-E(t))B}{\bo(\dot H^{(\alpha/2)-1},\dot H^{-\alpha/2})}\leq C(T)\big(h^{\min(\alpha\frac{r}{r+1},r)}+(\dt)^{\min(\alpha\frac{p}{p+1},1)}\big).
\end{equation}
To this end, we will prove the estimate
\begin{equation}\label{eq:detEstSWE5}
\sup_{n\in\{0,\ldots,N\}}\nnrm{P^1(\tilde E(t_n)-E(t_n))B}{\bo(\dot H^{(\alpha/2)-1},\dot H^{-\alpha/2})}\leq C(T)\big(h^{\min(\alpha\frac{r}{r+1},r)}+(\dt)^{\min(\alpha\frac{p}{p+1},p)}\big).
\end{equation}
Then, \eqref{eq:detEstSWE4} follows from \eqref{eq:detEstSWE5} and \eqref{eq:detEstSWE2} by estimating analogously to \eqref{eq:detEstSWE3} and using the fact that
\begin{align*}
\nnrm{P^1(E(t_n)-E(t))B}{\bo(\dot H^{(\alpha/2)-1},\dot H^{-\alpha/2})}
&=\nnrm{\Lambda^{-\frac\alpha4}P^1(E(t_n)-E(t))B\Lambda^{\frac12-\frac\alpha4}}{\bo(\dot H^0)}\\
&=\nnrm{P^1(E(t_n)-E(t))B\Lambda^{\frac{1-\alpha}2}}{\bo(\dot H^0)}\\
&\leq \nnrm{P^1(E(t_n)-E(t))}{\bo( \cH^\alpha,\dot H^0)}\nnrm{B\Lambda^{\frac{1-\alpha}2}}{\bo(\dot H^0, \cH^\alpha)},
\end{align*}
where $\nnrm{B\Lambda^{\frac{1-\alpha}2}}{\bo(\dot H^0, \cH^\alpha)}=\nnrm{B}{\bo(\dot H^{\alpha-1}, \cH^\alpha)}=1$.

In order to show \eqref{eq:detEstSWE5}, we distinguish the cases $\alpha>2$ and $0\leq\alpha\leq2$.
For $\alpha>2$ we have by \eqref{eq:detEstSWE1}
\begin{equation}\label{eq:proofSWE6}
\begin{aligned}
\sup_{n\in\{0,\ldots,N\}}&\nnrm{P^1(\tilde E(t_n)-E(t_n))B}{\bo(\dot H^{\alpha-1},\dot H^0)}\\
&\leq\sup_{n\in\{0,\ldots,N\}}\nnrm{P^1(\tilde E(t_n)-E(t_n))}{\bo( \cH^{\alpha},\dot H^0)}\nnrm{B}{\bo(\dot H^{\alpha-1}, \cH^\alpha)}\\
&\leq C(T)\big(h^{\min(\alpha\frac{r}{r+1},r)}+(\dt)^{\min(\alpha\frac{p}{p+1},p)}\big)
\end{aligned}
\end{equation}
As the operator $P^1(\tilde E(t)-E(t))B\in\bo(\dot H^0)$ is symmetric in $\dot H^0$ and since $\dot H^{-\alpha+1}$ can be identified with the dual space of $\dot H^{\alpha-1}$, we have
\begin{equation*}
\nnrm{P^1(\tilde E(t)-E(t))B}{\bo(\dot H^{\alpha-1},\dot H^0)}=\nnrm{P^1(\tilde E(t)-E(t))B}{\bo(\dot H^0,\dot H^{-\alpha+1})}
\end{equation*}
and therefore also
\begin{equation}\label{eq:proofSWE7}
\sup_{n\in\{0,\ldots,N\}}\nnrm{P^1(\tilde E(t_n)-E(t_n))B}{\bo(\dot H^0,\dot H^{-\alpha+1})}\leq C(T)\big(h^{\min(\alpha\frac{r}{r+1},r)}+(\dt)^{\min(\alpha\frac{p}{p+1},p)}\big).
\end{equation}
Next, we use the fact that $\dot H^{(\alpha/2)-1}$ and $\dot H^{-\alpha/2}$ can be represented as the real interpolation spaces $(\dot H^0,\dot H^{\alpha-1})_{\theta,2}$ and $(\dot H^{-\alpha+1},\dot H^0)_{\theta,2}$, respectively, where $\theta=((\alpha/2)-1)/(\alpha-1)\in(0,1)$, cf.~Remark~\ref{rem:interpolation}. Thus, interpolation between \eqref{eq:proofSWE6} and \eqref{eq:proofSWE7} yields
\begin{equation*}
\begin{aligned}
&\sup_{n\in\{0,\ldots,N\}}\nnrm{P^1(\tilde E(t_n)-E(t_n))B}{\bo(\dot H^{(\alpha/2)-1},\dot H^{-\alpha/2})}\\
&\leq\sup_{n\in\{0,\ldots,N\}}C(\alpha)\nnrm{P^1(\tilde E(t_n)-E(t_n))B}{\bo(\dot H^0,\dot H^{-\alpha+1})}^{1-\theta}\nnrm{P^1(\tilde E(t_n)-E(t_n))B}{\bo(\dot H^{\alpha-1},\dot H^0)}^\theta\\
&\leq C(T,\alpha)\big(h^{\min(\alpha\frac{r}{r+1},r)}+(\dt)^{\min(\alpha\frac{p}{p+1},p)}\big),
\end{aligned}
\end{equation*}
see, e.g., Definition 1.2.2/2 and Theorem 1.3.3(a) in \cite{Tri78}.

For $0\leq\alpha\leq2$, we note that
\begin{equation*}
\begin{aligned}
\nnrm{P^1(\tilde E(t_n)-E(t_n))B}{\bo(\dot H^0,\dot H^{-1})}
&=\nnrm{P^1(\tilde E(t_n)-E(t_n))B}{\bo(\dot H^{1},\dot H^0)}\\
&\leq\nnrm{P^1(\tilde E(t_n)-E(t_n))}{\bo( \cH^2,\dot H^0)}\nnrm{B}{\bo(\dot H^1, \cH^2)},\\
\end{aligned}
\end{equation*}
where we used again the symmetry of $P^1(\tilde E(t)-E(t))B\in\bo(\dot H^0)$.
By \eqref{eq:detEstSWE1} we obtain
\begin{equation}\label{eq:proofSWE8}
\sup_{n\in\{0,\ldots,N\}}
\nnrm{P^1(\tilde E(t_n)-E(t_n))B}{\bo(\dot H^0,\dot H^{-1})}
\leq C(T)\big(h^{\min(2\frac{r}{r+1},r)}+(\dt)^{\min(2\frac{p}{p+1},p)}\big),
\end{equation}
which is \eqref{eq:detEstSWE5} for $\alpha=0$.
Moreover, also by \eqref{eq:detEstSWE1},
\begin{equation}\label{eq:proofSWE9}
\begin{aligned}
\sup_{n\in\{0,\ldots,N\}}&\nnrm{P^1(\tilde E(t_n)-E(t_n))B}{\bo(\dot H^{-1},\dot H^{0})}\\
&\leq \sup_{n\in\{0,\ldots,N\}}\nnrm{P^1(\tilde E(t_n)-E(t_n))}{\bo( H,\dot H^{0})}\nnrm{B}{\bo(\dot H^{-1}, H)}
\;\leq\; C(T),
\end{aligned}
\end{equation}
i.e., we have \eqref{eq:detEstSWE5} for $\alpha=2$. Finally, if $\alpha\in(0,2)$, interpolation with $\theta=(\alpha/2)-1\in(0,1)$ between \eqref{eq:proofSWE8} and \eqref{eq:proofSWE9} gives
\begin{align*}
    &\sup_{n\in\{0,\ldots,N\}}\nnrm{P^1(\tilde E(t_n)-E(t_n))B}{\bo(\dot H^{(\alpha/2)-1},\dot H^{-\alpha/2})}\\
    &\quad\leq\sup_{n\in\{0,\ldots,N\}}C(\alpha)\nnrm{P^1(\tilde E(t_n)-E(t_n))B}{\bo(\dot H^{0},\dot H^{-1})}^{1-\theta}\nnrm{P^1(\tilde E(t_n)-E(t_n))B}{\bo(\dot H^{-1},\dot H^{0})}^{\theta}\\
    &\quad\leq C(T,\alpha)\big(h^{\min(2\frac{r}{r+1},r)}+(\dt)^{\min(2\frac{p}{p+1},p)}\big)^{\frac\alpha2}\\
    &\quad= C(T,\alpha)\big(h^{2\frac{r}{r+1}}+(\dt)^{2\frac{p}{p+1}}\big)^{\frac\alpha2}\\
    &\quad\leq C(T,\alpha)\big(h^{\min(\alpha\frac{r}{r+1},r)}+(\dt)^{\min(\alpha\frac{p}{p+1},p)}\big).\qedhere
\end{align*}
\end{proof}

%

\begin{appendix}
\section{Poisson random measures and a comparison of\\ stochastic integrals}
\label{sec:PRM+SI}

Our proof of Theorem~\ref{thm:errorRep} is based on It\^{o}'s formula for Banach space-valued jump processes driven by Poisson random measures as presented in \cite{MRT13}. Alternatively, one could use It\^{o}'s formula 
as proved in \cite{GraPel74}, but the formula in \cite{MRT13} is more convenient in our setting.
In this section, we use Lemma~\ref{lem:integrandIsomorphism} to relate our setting to the setting in \cite{MRT13}.

It is well-known that the jumps of a L\'{e}vy process determine a Poisson random measure on the product space of the underlying time interval and the state space. We refer to \cite[Section~6]{PesZab07} for a definition and properties of Poisson random measures. For $(\omega,t)\in\Omega\times(0,\infty)$ we denote by $\Delta L(t)(\omega):=L(t)(\omega)-\lim_{s\nearrow t}L(s)(\omega)\in U_1$ the jump of a trajectory of $L$ at time $t$. Setting
\begin{equation*}
N(\omega):=\sum_{\Delta L(t)(\omega)\neq0}\delta_{(t,\Delta L(t)(\omega))},\quad \omega\in\Omega,
\end{equation*}
defines a Poisson random measure $N$ on $([0,\infty)\times U_1,\cB([0,\infty))\otimes\cB(U_1))$ with intensity measure (or compensator) $\lambda\otimes\nu$, where $\lambda$ is Lebesgue measure on $[0,\infty)$ and $\nu$ is the jump intensity measure of $L$.
This follows, e.g., from Theorem~6.5 in \cite{PesZab07} together with Theorems~4.9, 4.15, 4.23 and Lemma~4.25 therein.
We denote the compensated Poisson random measure by
\begin{equation}
q:=N-\lambda\otimes \nu.
\end{equation}

Let $V$ be a (real and separable) Hilbert space. The stochastic integral with respect to $q$ of functions in $L^2(\Omega_T\times U_1,\bP_T\otimes \nu;V)=L^2(\Omega_T\times U_1,\cP_T\otimes\cB(U_1),\bP_T\otimes \nu;V)$  is constructed as a linear isometry
\begin{equation*}
L^2(\Omega_T\times U_1,\bP_T\otimes\nu;V)\to \cM^2_T(V),\;
f\mapsto\Big(\int_0^t\int_{U_1} f(s,x)\,q(\dl s,\dl x)\Big)_{t\in[0,T]}.
\end{equation*}
In particular, the $V$-valued integral processes have c\`{a}dl\`{a}g modifications; we will always work with such a c\`{a}dl\`{a}g modification.
Using a standard stopping procedure, the stochastic integral can be extendend to functions $f\in L^0(\Omega_T\times U_1,\cP_T\otimes\cB(U_1),\bP_T\otimes \nu;V)$ such that
\[\bP\Big(\int_0^T\int_{U_1}\nnrm{f(s,x)}V^2\,\nu(\dl x)\,\dl s<\infty\Big)=1.\]
We refer to \cite{MRT13}, \cite{Pre10} and the references therein for details on stochastic integration w.r.t.\ Poisson random measures, compare also \cite[Section~8.7]{PesZab07}.

\begin{remark}
Stricty speaking, in \cite{MRT13} the integrands $f$ do not have to be predictable but only $\cF_t\otimes\cB(U_1)$-adapted and $\cF\otimes\cB([0,T])\otimes\cB(U_1)$-measurable. However, it is clear that in the case of predictable, i.e., $\cP_T\otimes\cB(U_1)$-measurable, and square integrable Hilbert space-valued integrands $f$ the stochastic integral in \cite{MRT13} coincides with the stochastic integral considered in \cite{PesZab07}, \cite{Pre10}. See \cite{RueTap12} for a detailed comparison of the different spaces of integrands.
\end{remark}

Since $\bE\int_0^T\int_{U_1}\nnrm{x}{U_1}^2\,\nu(\dl x)\dl t$ is finite for all $T<\infty$, it follows that the integral process $(\int_0^t\int_{U_1} x\,q(\dl s,\dl x))_{t\geq 0}$ is uniquely determined (up to indistinguishability) as a $U_1$-valued square-integrable
c\`{a}dl\`{a}g martingale. Taking into account the assumptions on the L\'{e}vy process $L$, the L\'{e}vy-Khinchin decomposition \cite[Theorem~4.23]{PesZab07}, the definition of $q$, and the construction of the stochastic integral w.r.t.\ $q$, it is not difficult to see that the processes $L$ and $(\int_0^{t}\int_{U_1} x\,q(\dl s,\dl x))_{t\geq0}$ are indistinguishable, i.e.,
\begin{equation}\label{eq:L=qInt}
\bP\Big(L(t)=\int_0^t\int_{U_1} x\,q(\dl s,\dl x)\quad\forall\; t\geq0\Big)=1.
\end{equation}

Using Lemma~\ref{lem:integrandIsomorphism}, we are now able to identify stochastic integrals w.r.t.\ $L$ and stochastic integrals w.r.t.\ the compensated Poisson random measure $q$. Recall from Remark~\ref{not:Phix} that we identify processes $\Phi\in L^2(\Omega_T,\bP_T;\hs(U_0,H))$ with the corresponding functions $\kappa(\Phi)\in L^2(\Omega_T\times U_1,\bP_T\otimes\nu;H)$. Thus, for such $\Phi$ the integral process $(\int_0^t\int_{U_1}\Phi(s)x\,q(\dl s,\dl x))_{t\in[0,T]}$ is defined.

\begin{lemma}\label{lem:comparisonIntegrals}
Given $\Phi\in L^2(\Omega_T,\bP_T;\hs(U_0,H))$, the $H$-valued c\`{a}dl\`{a}g integral processes $(\int_0^{t}\Phi(s)\,\dl L(s))_{t\in[0,T]}$ and $(\int_0^{t
}\int_{U_1} \Phi(s)x\,q(\dl s,\dl x))_{t\in[0,T]}$ are indistinguishable. That is, 
\[\bP\Big(\int_0^t\Phi(s)\,\dl L(s)=\int_0^t\int_{U_1}\Phi(s)x\,q(\dl s,\dl x)\quad \forall\,t\in [0,T]\Big)=1.\]
\end{lemma}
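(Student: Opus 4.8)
The plan is to exhibit both integral maps as isometries from the single space $L^2(\Omega_T,\bP_T;\hs(U_0,H))$ into $\cM^2_T(H)$ and to check that they agree on a dense set of elementary integrands; indistinguishability then follows by a standard density-and-completeness argument. On the one hand, It\^o's isometry \eqref{eq:ItoIsom} shows that $\Phi\mapsto\big(\int_0^t\Phi(s)\,\dl L(s)\big)_{t\in[0,T]}$ is an isometry into $\cM^2_T(H)$. On the other hand, the stochastic integral w.r.t.\ $q$ is an isometry from $L^2(\Omega_T\times U_1,\bP_T\otimes\nu;H)$ into $\cM^2_T(H)$, and by Lemma~\ref{lem:integrandIsomorphism}(ii) the embedding $\kappa$ is itself isometric; composing the two, $\Phi\mapsto\big(\int_0^t\int_{U_1}\Phi(s)x\,q(\dl s,\dl x)\big)_{t\in[0,T]}$ is an isometry into $\cM^2_T(H)$ of the \emph{same} norm. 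Hence it suffices to prove the asserted identity for $\Phi$ ranging over a dense class.

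For the elementary case I would take integrands of the form $\Phi(s,\omega)=\sum_{j=1}^m\one_{(t_{j-1},t_j]}(s)\Phi_j(\omega)$, where $0=t_0<\dots<t_m=T$ and each $\Phi_j$ is $\cF_{t_{j-1}}$-measurable, takes finitely many values, and every value is a finite-rank operator $\sum_{k=1}^n\lambda_k^{-1/2}\langle\arg,f_k\rangle_{U_1}h_k$ with $h_k\in H$ (here $(f_k)$, $(\lambda_k)$ are as in Lemma~\ref{lem:integrandIsomorphism}). Such operators lie in $\bo(U_1,H)\subset\hs(U_0,H)$, so by Remark~\ref{not:Phix} their pointwise action agrees $\nu$-a.e.\ with $\iota(\Phi_j)$, and both integrals reduce to the same finite sum over $j$ of increments acted on by $\Phi_j$. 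Indeed, using \eqref{eq:L=qInt} in the form
\[
L(t\wedge t_j)-L(t\wedge t_{j-1})=\int_{t\wedge t_{j-1}}^{t\wedge t_j}\int_{U_1}x\,q(\dl s,\dl x),
\]
together with the fact that an $\cF_{t_{j-1}}$-measurable, finitely-valued bounded operator may be pulled through the vector-valued $q$-integral (true for simple integrands by definition and extended by the isometry), one obtains $\int_0^t\Phi(s)\,\dl L(s)=\int_0^t\int_{U_1}\Phi(s)x\,q(\dl s,\dl x)$ for all $t\in[0,T]$, almost surely, for every such elementary $\Phi$.

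The point requiring the most care is density of this class and compatibility of the two identifications. The operators $\sum_{k=1}^n\lambda_k^{-1/2}\langle\arg,f_k\rangle_{U_1}h_k$ are precisely the finite linear combinations $\sum_{k=1}^n\langle\arg,e_k\rangle_{U_0}h_k$ in the orthonormal basis $(e_k)=(\lambda_k^{1/2}f_k)$ of $U_0$ and hence are dense in $\hs(U_0,H)$; standard arguments then give density of the corresponding predictable step processes in $L^2(\Omega_T,\bP_T;\hs(U_0,H))$. Because each value lies in $\bo(U_1,H)$, the pointwise evaluation used above is legitimate, so no ambiguity arises between the genuine pointwise action and the merely $L^2(\nu)$-defined action of a general $\hs(U_0,H)$-operator. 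This is the main obstacle: one must select the approximating integrands so that $\Phi_j x$ makes classical pointwise sense.

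Finally I would pass to the limit. Given arbitrary $\Phi\in L^2(\Omega_T,\bP_T;\hs(U_0,H))$, choose elementary $\Phi^{(k)}$ of the above type with $\Phi^{(k)}\to\Phi$ in this space. By the two isometries, the associated $\dl L$- and $q$-integral processes converge in $\cM^2_T(H)$ to $\int_0^\cdot\Phi\,\dl L$ and $\int_0^\cdot\int_{U_1}\Phi x\,q$, respectively; applying Doob's $L^2$-maximal inequality, a common subsequence converges uniformly on $[0,T]$ almost surely. Since for each $k$ the two approximating processes are indistinguishable, their uniform almost-sure limits coincide, which is exactly the claimed indistinguishability. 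Everything beyond the choice of the dense approximating class is the routine isometry-plus-completeness scheme.
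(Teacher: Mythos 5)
Your proof is correct and follows essentially the same route as the paper's: verify the identity on predictable step processes whose values act genuinely pointwise on $U_1$, using \eqref{eq:L=qInt} and the pull-through property of the $q$-integral, then pass to the limit via the two isometries and Lemma~\ref{lem:integrandIsomorphism}. The only cosmetic differences are that the paper's simple processes take values in all of $\bo(U_1,H)$ rather than in your finite-rank eigenbasis class (both are dense and both make the pointwise evaluation legitimate), and that the paper concludes from equality in $\cM^2_T(H)$ directly where you invoke Doob's maximal inequality along a subsequence --- minor variants of the same argument.
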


\begin{proof}
We first assume that $\Phi$ is a simple $\bo(U_1,H)$-valued process of the form
\begin{equation*}
\Phi(s)=\sum_{k=0}^{m-1}\one_{F_k}\one_{(t_k,t_{k+1}]}(s)\Phi_k,\quad s\in[0,T],
\end{equation*}
with $0\leq t_0<t_1<\cdots<t_m\leq T$, $m\in\bN$, $F_k\in\cF_{t_k}$ and $\Phi_k\in\bo(U_1,H)$. Recall from Section~\ref{sec:LSEE} that $\bo(U_1,H)$ is a subspace of $\hs(U_0,H)$. Using \eqref{eq:L=qInt} and applying standard arguments for the evaluation of stochastic integrals, we obtain for fixed $t\in[0,T]$, $\bP$-almost surely,
\begin{align*}
\int_0^t\Phi(s)\,\dl L(s)&=\sum_{k=0}^{m-1}\one_{F_k}\Phi_k(L(t_{k+1}\wedge t)-L(t_k\wedge t))\\
&=\sum_{k=0}^{m-1}\one_{F_k}\Phi_k\Big(\int_0^T\int_{U_1} \one_{(t_k\wedge t, t_{k+1},\wedge t]}(s) x\,q(\dl s,\dl x)\Big)\\
&=\sum_{k=0}^{m-1}\int_0^T\int_{U_1} \one_{F_k}\one_{(t_k\wedge t, t_{k+1},\wedge t]}(s)\Phi_k x\,q(\dl s,\dl x)\\
&=\int_0^t\int_{U_1}\Phi(s) x\,q(\dl s,\dl x).
\end{align*}
Since both processes are right-continuous, we see that the processes $(\int_0^{t}\Phi(s)\,\dl L(s))_{t\in[0,T]}$ and $(\int_0^{t
}\int_{U_1} \Phi(s)x\,q(\dl s,\dl x))_{t\in[0,T]}$ are indistinguishable.


For general $\Phi\in L^2(\Omega_T,\bP_T;\hs(U_0,H))$, take a sequence $(\Phi_n)_{n\in\bN}$ of simple $\bo(U_1,H)$-valued process
such that $\Phi_n\to\Phi$ in $L^2(\Omega_T,\bP_T;\hs(U_0,H))$; see, e.g., \cite[Proposition~2.3.8]{PreRoeck07} for a proof of the existence of such a sequence.
Then, the
processes $$\int_0^{\arg}\Phi_n(s)\,\dl L(s)=(\int_0^{t}\Phi_n(s)\,\dl L(s))_{t\in[0,T]}$$ and $\int_0^{\arg}\int_{U_1}\Phi_n(s)x\,q(\dl s,\dl x)=(\int_0^{t
}\int_{U_1} \Phi_n(s)x\,q(\dl s,\dl x))_{t\in[0,T]}$ are indistinguishable for all $n\in\bN$, and
we have the convergence $\int_0^{\arg}\Phi_n(s)\,\dl L(s)\to\int_0^{\arg}\Phi(s)\,\dl L(s)$ in $\cM^2_T(H)$ by the construction of the stochastic integral w.r.t.\ $L$.
 According to Lemma~\ref{lem:integrandIsomorphism}, the convergence $\Phi_n\to\Phi$ in $L^2(\Omega_T,\bP_T;\hs(U_0,H))$ entails the convergengence $\kappa(\Phi_n)\to \kappa(\Phi)$ in $L^2(\Omega_T\times U_1,\bP_T\otimes\nu;H)$, so that we also have $$\int_0^{\arg}\int_{U_1}\Phi_n(s)x\,q(\dl s,\dl x)\to\int_0^{\arg}\int_{U_1}\Phi(s)x\,q(\dl s,\dl x)$$ in $\cM^2_T(H)$. Thus, $\int_0^{\arg}\Phi(s)\,\dl L(s)=\int_0^{\arg}\int_{U_1}\Phi(s)x\,q(\dl s,\dl x)$ as an equality in $\cM^2_T(H)$, which yields the assertion.
\end{proof}

\end{appendix}

\providecommand{\bysame}{\leavevmode\hbox to3em{\hrulefill}\thinspace}

\section*{}
\noindent Mih\'{a}ly Kov\'{a}cs\\
Department of  Mathematics and Statistics\\
University of Otago \\
P.O.~Box 56, Dunedin, New Zealand\\
E-mail: mkovacs@maths.otago.ac.nz \\

\noindent Felix Lindner\\
Fachbereich Mathematik \\
Technische Universit\"{a}t Kaiserslautern\\
Postfach 3049, Kaiserslautern, Germany \\
E-mail: lindner@mathematik.uni-kl.de \\

\noindent Ren\'{e} L.~Schilling\\
Fachrichtung Mathematik\\
Institut f\"{u}r Mathematische Stochastik \\
Technische Universit\"{a}t Dresden \\
01062 Dresden, Germany \\
E-mail: rene.schilling@tu-dresden.de

\end{document}